\NeedsTeXFormat{LaTeX2e}
\documentclass[a4paper,12pt,reqno]{amsart}
\usepackage{amssymb}
\usepackage{amsfonts}
\usepackage{amsmath}
\usepackage{mathrsfs}
\textwidth = 6.5in
\hoffset = -1in

\pagestyle{plain}

\DeclareMathOperator{\ch}{char} \DeclareMathOperator{\ann}{ann}
\DeclareMathOperator{\aut}{Aut}

\newcommand{\K}{{\mathbb K}}
\newcommand{\C}{{\mathbb C}}

\newcommand{\Z}{{\mathbb Z}}
\newcommand{\N}{{\mathbb N}}

\newcommand{\Y}{{\mathcal Y}}
\newcommand{\W}{{\mathcal W}}

\newtheorem{theorem}{Theorem}[section]
\newtheorem{lemma}[theorem]{Lemma}
\newtheorem{cor}[theorem]{Corollary}
\newtheorem{prop}[theorem]{Proposition}

\theoremstyle{definition}
\newtheorem{defns}[theorem]{Definitions}
\newtheorem{notn}[theorem]{Notation}
\newtheorem{ex}[theorem]{Example}
\newtheorem{rmk}[theorem]{Remark}

\begin{document}

\title{Prime spectra of ambiskew polynomial rings}

\author{Christopher D. Fish}
\address{School of Mathematics and Statistics\\
University of Sheffield\\
Hicks Building\\
Sheffield S3~7RH\\
UK}
\email{christopher.fish@cantab.net}
\author{David A. Jordan}
\address{School of Mathematics and Statistics\\
University of Sheffield\\
Hicks Building\\
Sheffield S3~7RH\\
UK}
\email{d.a.jordan@sheffield.ac.uk}

\subjclass[2010]{Primary 16S36; Secondary 16D25, 16D30, 16N60, 16W20, 16W25, 16U20}

\keywords{skew polynomial ring, prime ideals}

\thanks{Some of the results in this paper appear in the University of Sheffield PhD thesis of the first author who acknowledges the support of the Engineering and Physical Sciences Research Council of the UK. We thank the referee for several suggestions that have significantly improved the exposition.}

\begin{abstract}
We determine sufficient criteria for the prime spectrum of an ambiskew polynomial algebra $R$ over an algebraically closed field $\K$ to be akin to those of two of the principal examples of such an algebra, namely the universal enveloping algebra $U(sl_2)$ (in characteristic $0$) and its quantization $U_q(sl_2)$ (when $q$ is not a root of unity). More precisely, we determine sufficient criteria for the prime spectrum  of $R$ to consist of $0$, the ideals $(z-\lambda)R$ for some central element $z$ of $R$ and all $\lambda\in \K$, and, for some positive integer $d$ and each positive integer $m$, $d$ height two prime ideals $P$ for which $R/P$ has Goldie rank $m$.
\end{abstract}

\maketitle

\thispagestyle{empty}

\section{Introduction}
The results of this paper are applicable to the determination of the prime ideals of certain ambiskew polynomial algebras and generalized Weyl algebras. For readers unfamiliar with these algebras, details appear at the end of this introduction. The main results of \cite{ambiskew} are simplicity criteria for an ambiskew polynomial algebra  $R$ over a field $\K$ and, in cases where $R$ is not itself simple, certain localizations and factors of $R$ including generalized Weyl algebras. Such results are applicable to the analysis of the prime spectrum of an ambiskew polynomial ring or of any ring which has an ambiskew polynomial ring as a localization.
Our aim is to prove results that can be applied together to show that, under appropriate conditions, the prime spectrum of a given algebra $R$ over an algebraically closed field $\K$ meets the following description $(*)$:
\begin{itemize}
\item $0$ is a prime ideal, \item there exists $z\in Z(R)$ (the centre of $R$) such that the height one prime ideals have the form $(z-\lambda)R$, $\lambda\in \K$, \item $(z-\lambda)R$ is maximal for all but countably many values of $\lambda$ and \item there is a positive integer $d$ such that,
 for each $m\geq 1$, $R$ has $d$ height two prime ideals $P$ for which $R/P$ has Goldie rank $m$.\end{itemize}
It is well-known that the prime spectra of the universal enveloping algebra $U(sl_2)$ (in characteristic $0$) and the universal quantized enveloping algebra $U_q(sl_2)$ (when $q$ is not a root of unity) fit the description $(*)$ with $d=1$ and $2$ respectively. These two algebras are among the main examples of ambiskew polynomial rings. They are well-understood and will serve to illustrate our results. The new application will be to certain ambiskew polynomial rings over coordinate rings of quantum tori which arise, as localizations, in our analysis of
connected quantized Weyl algebras \cite{cdfdaj}. For these algebras, which will be introduced in Example~\ref{quantum torus}, we shall see that the prime spectrum
fits the description $(*)$ with $d=2$. This is applied in \cite{cdfdaj} to the determination of the prime spectra of connected quantized Weyl algebras.

The first step in establishing $(*)$ for a domain is to identify an appropriate central element $z$ for which the localization of $R$ at $\K[z]\backslash\{0\}$ is simple. This will be done in Section~2 using the notion of a  Casimir element for an ambiskew polynomial ring. When such elements exist, they are normal  but not necessarily central. \cite[Theorem 4.7]{ambiskew} is a simplicity criterion for the localization of $R$ at the powers of $z$. If $z$ is central then this localization is never simple and the appropriate localization for which to consider simplicity is at $\K[z]\backslash\{0\}$. In Proposition~\ref{zlocsimple}, we give sufficient conditions for this localization to be simple. As the localization is central, all  ideals of $R$ extend to ideals of the localization and simplicity of the localization is equivalent to the property that every non-zero ideal $R$ has non-zero intersection with $\K[z]$.
Proposition~\ref{zlocsimple2} generalizes Proposition~\ref{zlocsimple} to a
 situation where
there is a central polynomial subalgebra $\K[z,c_1,\dots,c_t]$ of $R$ for some $t\geq 0$. This general result will be applied, with $t=1$ to show that  the augmented down-up algebras of \cite{terwor} have the property that every non-zero ideal has non-zero intersection with the centre which, for these algebras, is a polynomial algebra in two indeterminates.

Having completed the first step, we proceed, in Section 3, to analyse prime spectra of the factors $R/(z-\lambda)R$ for $\lambda\in \K$. For description $(*)$ to hold we need all but countably many of these to be simple.  These factors are generalized Weyl algebras $W(A,\alpha,u)$ in the sense of \cite{vlad1} and we apply known
simplicity criteria \cite{vlad5,ambiskew} for $W(A,\alpha,u)$  to give, in Corollary~\ref{zlambdamax}, sufficient conditions, involving a positive integer parameter $m$, for $R/(z-\lambda)R$ to be simple. We then proceed to give, in Corollary~\ref{unique}, sufficient conditions for $R/(z-\lambda)R$ to have a unique non-zero prime ideal
$P/(z-\lambda)R$. In Section 4, under mild extra conditions,
 we show that the parameter $m$ is the right Goldie rank of  $R/P$ for the unique prime ideal $P/(z-\lambda)R$ of $R/(z-\lambda)R$ when it exists.

In the motivating examples arising from quantum tori, it turns out that for each $m\geq 1$, there are precisely two values of $\lambda$ for which $R/(z-\lambda)R$ is not simple and, for each of these values of $\lambda$, $R/(z-\lambda)R$ has a unique non-zero prime ideal.
For $U(sl_2)$ and the quantized enveloping algebra $U_q(sl_2)$
the exceptional maximal ideals are annihilators of finite-dimensional simple modules but this is not the case for the examples over quantum tori, where the factors are infinite-dimensional.

In the remainder of the introduction, we give some reminders of the construction and properties of ambiskew polynomial rings and generalized Weyl algebras.
\begin{defns}\label{defambi}
Let $\K$ be a field, and let $A$ be a
 $\K$-algebra. For convenience, we shall assume that $\K$ is algebraically closed. Let $\rho\in
\K\backslash\{0\}$ and let $v$ be a central element of $A$. Let
$\alpha\in\aut_{\K}A$ and let $\beta=\alpha^{-1}$. Extend $\beta$ to a $\K$-automorphism of
$A[y;\alpha]$ by setting $\beta(y)=\rho y$. There is a
$\beta$-derivation $\delta$ of $A[y;\alpha]$ such that $\delta(A)=0$ and
$\delta(y)=v$. The {\it ambiskew polynomial algebra} $R(A,\alpha,v,\rho)$ is the iterated skew polynomial algebra
$A[y;\alpha][x;\beta,\delta]$. Thus $ya=\alpha(a)y$ and $xa=\beta(a)x$  for all $a\in A$ and
$xy=\rho yx+v.$

More general versions of ambiskew polynomial algebras are considered in  \cite{ambiskew}, where $v$ need not be central and $\beta$ need not be $\alpha^{-1}$,  and \cite{downup}, where $\alpha$ need not be bijective, but here we consider only the case specified above.

If there is a central element $u\in A$ such that
$v=u-\rho\alpha(u)$ then the element
$z=xy-u=\rho(yx-\alpha(u))$ is such that
$zy=\rho yz$, $zx=\rho^{-1} xz$ and $za=az$ for all $a\in A$. Hence
$z$ is normal in $R$, i.e. $zR=Rz$, and it is central if and only if $\rho=1$.
If such an element $u$ exists then it is called a {\it splitting} element
and we say that $R$ is a {\it conformal} ambiskew polynomial algebra.
We then refer to the element
$z$ as the {\it Casimir element}
of $R$. If $\rho=1$ then $u$ and $z$ are not unique and, for any $\lambda\in \K$, can be replaced by $u+\lambda$ and $z-\lambda$ respectively.

Let
$v^{(0)}=0$ and $v^{(m)}=\sum_{l=0}^{m-1}\rho^{l}\alpha^{l}(v)$ for
$m\in\N$. In particular $v^{(1)}=v$.  Each  $v^{(m)}$ is central and
it is easily checked, by induction, that, for $m\geq 0$,
\begin{eqnarray}
\label{skewcomm} xy^{m}-\rho^{m}y^{m}x & = & v^{(m)} y^{m-1}\;\text{ and}
\\
 x^{m}y-\rho^{m}yx^{m} & = & x^{m-1}v^{(m)} = \alpha^{1-m}(v^{(m)})x^{m-1}.
\end{eqnarray}
If $u$ is a splitting element then $v^{(m)}=u-\rho^m\alpha^m(u)$.

\end{defns}

\begin{defns}\label{GWA}\cite{vlad1} Let $A$ be a ring, $\alpha$ be an automorphism of $A$, with inverse $\beta$, and $u$ be a central element of $A$.
The \emph{generalized Weyl algebra} $W(A,\alpha,u)$ is generated, as a ring extension of $A$, by $X$ and $Y$ subject to the relations $YX=\alpha(u)$, $XY=u$ and,  for all $a\in A$, $Ya=\alpha(a)Y$ and $Xa=\beta(a)X$.    Here $A$ and $\alpha$ will be a $\K$-algebra and a $\K$-automorphism respectively. If $R$ is the conformal ambiskew polynomial ring $R(A,\alpha,u-\alpha(u),1)$, with Casimir element $z$, then we may identify
$W(A,\alpha,u)$ with $R/zR$, $X$ with $x+zR$ and $Y$ with $y+zR$.

The algebra $W=W(A,\alpha,u)$ has a $\Z$-grading in which $W_0=A$ and, for $i>0$,  $W_i=AY^i$ and $W_{-i}=AX^i$. If $A$ is a domain then, by the $\Z$-grading, so too, for each $\lambda\in \K$,  is $W(A,\alpha,u+\lambda)\simeq R/(z-\lambda)R$. Hence, if $A$ is a domain, $(z-\lambda)R$ is a completely prime ideal of $R$ for all $\lambda\in \K$.

It is easy to check inductively that, for all $m\geq 1$,
\[X^{m}Y^{m}=\displaystyle\prod_{i=0}^{m-1}\alpha^{-i}(u)\text{ and }
Y^{m}X^{m}=\displaystyle\prod_{i=1}^{m}  \alpha^{i}(u).\]
As observed in \cite[Notation 5.3]{ambiskew}, the isomorphic skew Laurent polynomial rings
$A[Y^{\pm 1};\alpha]$ and $A[X^{\pm 1};\alpha^{-1}]$ are the localizations of $W$ at the Ore sets $\{Y^i: i\geq 1\}$ and $\{X^i: i\geq 1\}$  respectively.

\end{defns}

\begin{rmk}
The numbering of results in this version is different to that in the previous version cited in \cite{cdfdaj}. The references in \cite{cdfdaj} to  Example 3.12 and Corollary 4.7 would, in the current numbering, be to Example~\ref{spectrumSC}  and Corollary~\ref{GrankS}  respectively.
\end{rmk}

\section{Simple central localizations}
The following lemma, which in the Noetherian case is an immediate consequence of \cite[2.1.16(vi)]{McCR}, is a generalization of \cite[Lemma 3.1]{ambiskew}.
\begin{lemma}\label{yIy}
Let $B$ be a ring, let $y$ be a regular element of $B$ such that ${\Y}:=\{y^i\}_{i\geq1}$ is a right and left Ore set and let $\mathcal{Z}$ be a multiplicatively closed set of central elements of $B$. Let $\W=\{y^iz:i\geq 1, z\in \mathcal{Z}\}$, which a right and left Ore set,
and let $C=B_{\W}$ be the localization of $B$ at ${\W}$. If $C$ is simple and $I$ is a non-zero ideal of $B$ then $y^sz\in I$ for some $s\geq 0$ and some $z\in \mathcal{Z}$.
\end{lemma}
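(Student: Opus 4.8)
The plan is to extend the ideal $I$ to the localization $C$, use the simplicity of $C$ to see that this extension is the whole of $C$, and then pull the resulting expression for $1$ back inside $B$. Let $\phi\colon B\to C$ be the canonical homomorphism. At the outset I would record the features of $\W$ that the argument uses: $\W$ is multiplicatively closed, because each $z\in\mathcal{Z}$ is central and so $(y^iz)(y^jz')=y^{i+j}(zz')\in\W$; every element of $C$ can be written both as a right fraction $\phi(b)\phi(w)^{-1}$ and as a left fraction $\phi(w)^{-1}\phi(b)$ with $b\in B$ and $w\in\W$; $\ker\phi=\{b\in B:bw=0\text{ for some }w\in\W\}$; and any finite subset of $\W$ has a common right multiple and a common left multiple lying in $\W$ --- for $y^{i_1}z_1,\dots,y^{i_n}z_n$ one may take $y^{N}z_1\cdots z_n$ with $N=\max_k i_k$, using that the $z_k$ are central.

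Let $I^e$ denote the two-sided ideal of $C$ generated by $\phi(I)$. The first real step is to check that $I^e\neq 0$. If $\phi(I)=0$, pick $0\neq i\in I$; then $iw=0$ for some $w=y^jz\in\W$ with $j\geq 1$ and $z\in\mathcal{Z}$, and since $z$ is central this says $(iz)y^j=0$, so regularity of $y$ forces $iz=0$; as $z$ is a non-zero-divisor (the elements of $\mathcal{Z}$ being regular, as they are in the situations where this lemma is applied, in particular when $B$ is a domain) we get $i=0$, a contradiction. So $I^e\neq 0$, hence $I^e=C$ by simplicity of $C$; in particular $1\in I^e$.

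Now write $1=\sum_{k=1}^{n}c_ki_kd_k$ with $c_k,d_k\in C$ and $i_k\in I$. Putting $d_1,\dots,d_n$ over a common right denominator $w\in\W$, say $d_k=\phi(e_k)\phi(w)^{-1}$, and using that $I$ is a right ideal so that $j_k:=i_ke_k\in I$, gives $1=\bigl(\sum_k c_k\phi(j_k)\bigr)\phi(w)^{-1}$, whence $\phi(w)=\sum_k c_k\phi(j_k)$. Putting $c_1,\dots,c_n$ over a common left denominator $w'\in\W$, say $c_k=\phi(w')^{-1}\phi(f_k)$, and using that $I$ is a left ideal so that $j_k':=f_kj_k\in I$, gives $\phi(w)=\phi(w')^{-1}\phi(j)$ with $j:=\sum_k j_k'\in I$, that is $\phi(w'w)=\phi(j)$. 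Hence $w'w-j\in\ker\phi$, so $(w'w-j)w''=0$ for some $w''\in\W$, i.e.\ $w'ww''=jw''$. The left-hand side lies in $\W$, since $\W$ is multiplicatively closed, and the right-hand side lies in $I$, since $I$ is a right ideal; writing $w'ww''=y^sz$ with $s\geq 1$ and $z\in\mathcal{Z}$ then gives $y^sz\in I$, as required.

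The bookkeeping in the third paragraph is routine: it needs only the existence of common denominators inside $\W$ and the fact that, $I$ being two-sided, one may absorb factors from $B$ into its elements on either side. The point that needs care is the claim $I^e\neq 0$, i.e.\ excluding the possibility $I\subseteq\ker\phi$; this is exactly where regularity of $y$ (and of the central elements of $\mathcal{Z}$) is used, and once $I^e=C$ is established the simplicity of $C$ has been fully exploited and only elementary fraction manipulation in $C$ remains.
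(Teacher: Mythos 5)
Your argument is correct, but it takes a genuinely different and more self-contained route than the paper. The paper factors the localization as $C=(B_{\mathcal{Z}})_{\Y}$, applies Lemma 3.1 of \cite{ambiskew} to the intermediate ring $B_{\mathcal{Z}}$ to get $y^{s}\in IB_{\mathcal{Z}}$, and then contracts back to $B$ via \cite[2.1.16(iv)]{McCR} to obtain $y^{s}z\in I$; the whole proof is three lines of citation. You instead work directly in $C$: extend $I$, use simplicity to write $1=\sum c_{k}i_{k}d_{k}$, and clear denominators on both sides by hand until an element of $\W$ lands in $I$. In effect you reprove the extension--contraction machinery of \cite[2.1.16]{McCR} in this special case; your common-multiple construction $y^{N}z_{1}\cdots z_{n}$, the absorption of the $e_{k}$ and $f_{k}$ into $I$, and the final step $w'ww''=jw''\in I$ are all sound, and your conclusion with $s\geq 1$ is more than what is asked. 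What the paper's route buys is brevity; what yours buys is independence from the two cited results. One further point in your favour: you correctly isolate the fact that excluding $I\subseteq\ker\phi$ needs the elements of $\mathcal{Z}$ to be regular (or at least that some element of $I$ is not annihilated by any element of $\mathcal{Z}$), which is not among the stated hypotheses. The paper's proof quietly needs exactly the same thing, since applying Lemma 3.1 of \cite{ambiskew} to $IB_{\mathcal{Z}}$ presupposes $IB_{\mathcal{Z}}\neq 0$; the hypothesis does hold in every application in the paper, where $B$ is a domain and $\mathcal{Z}=\K[z]\backslash\{0\}$, so this is a defect of the statement rather than of your proof, and flagging it explicitly was the right thing to do.
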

\begin{proof}
Note that $C=(B_\mathcal{Z})_\mathcal{Y}$. It follows easily from the centrality of $\mathcal{Z}$ that $IB_\mathcal{Z}$ is an ideal of $B_\mathcal{Z}$. By \cite[Lemma 3.1]{ambiskew}, $y^s\in IB_\mathcal{Z}$  for some $s\geq 0$. By \cite[2.1.16(iv)]{McCR}, $y^sz\in I$ for some $z\in \mathcal{Z}$.
\end{proof}

\begin{prop}
\label{zlocsimple}
Let $R$ be a conformal ambiskew polynomial ring of the form $R(A,\alpha,v,1)$ where $A$ is a $\K$-algebra and $v\in A$ is central. Let $u$ be a splitting element, with  corresponding Casimir element $z=xy-u$, and let $\mathcal{Z}$ be the multiplicatively closed set of central elements $\K[z]\backslash\{0\}$. Suppose that $A[y^{\pm 1};\alpha]$ is simple and that $Z(A[y^{\pm 1};\alpha])=\K$. Then $R_\mathcal{Z}$ is simple if and only if, for all $m\geq 1$, there exists a non-zero polynomial $p_m(X)\in \K[X]$ such that $p_m(u)\in v^{(m)}A$.
\end{prop}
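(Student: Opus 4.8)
The localization $R_\mathcal{Z}=R\otimes_{\K[z]}\K(z)$ is central, so $R_\mathcal{Z}$ is simple if and only if every nonzero ideal of $R$ meets $\K[z]\setminus\{0\}$. My first move is to identify $R_\mathcal{Z}$, via the relations $xy=u+z$, $yx=\alpha(u)+z=\alpha(u+z)$ and $ya=\alpha(a)y$, $xa=\alpha^{-1}(a)x$ (for $a\in A$, with $\alpha$ extended $\K(z)$-linearly), as the generalized Weyl algebra $W(A',\alpha,u+z)$ of Definitions~\ref{GWA}, where $A':=A\otimes_{\K}\K(z)$ is the degree-zero component of $R_\mathcal{Z}$ for the $\Z$-grading with $\deg y=1=-\deg x$. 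Next I record that the localization $R_{\W}$ of $R$ at $\W=\{y^{i}p(z):i\geq1,\ 0\neq p\in\K[z]\}$ (the notation of Lemma~\ref{yIy} with $\mathcal{Z}=\K[z]\setminus\{0\}$) is simple: since $\Y=\{y^{i}\}_{i\geq1}$ is an Ore set (a routine check) and inverting $y$ gives $x=(u+z)y^{-1}$, the central element $z$ is an indeterminate over $S:=A[y^{\pm1};\alpha]$ — indeed $z^{k}$ has leading term $x^{k}y^{k}$ with unit coefficient $y^{k}$ in the right $S$-basis $\{x^{i}\}$ of $R[y^{-1}]$, so $\{z^{k}\}$ is also a right $S$-basis and $R[y^{-1}]\cong S[z]$ — whence $R_{\W}\cong S\otimes_{\K}\K(z)$; as $S$ is simple with centre $\K$, a short division argument shows the ideals of $S[z]$ are precisely the $S[z]p(z)$, $p\in\K[z]$, and therefore $R_{\W}$ is simple. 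Note that $R_{\W}$ is the localization of $W(A',\alpha,u+z)$ at the powers of $y$, namely $A'[y^{\pm1};\alpha]$.

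Since $u+z$ is a regular element of $A'$, the simplicity criteria for generalized Weyl algebras \cite{vlad5,ambiskew} then say that $R_\mathcal{Z}$ is simple if and only if $R_{\W}$ is simple (which always holds here, as above) and, for every $n\geq1$, $A'(u+z)+A'\alpha^{n}(u+z)=A'$. It therefore suffices to prove that this last condition holds for all $n$ exactly when, for all $m\geq1$, there is a nonzero $p_{m}\in\K[X]$ with $p_{m}(u)\in v^{(m)}A$. (The ``if'' half of the criterion is precisely Lemma~\ref{yIy} in action: if $I$ is a nonzero ideal of $R$ and $s\geq0$ is minimal with $y^{s}\in IR_\mathcal{Z}$ — such an $s$ exists by Lemma~\ref{yIy}, $p(z)$ being a unit in $R_\mathcal{Z}$ — then, should $s\geq1$, the relations $xy^{s}=(u+z)y^{s-1}$ and $y^{s}x=(\alpha^{s}(u)+z)y^{s-1}$ give $\bigl(A'(u+z)+A'\alpha^{s}(u+z)\bigr)y^{s-1}\subseteq IR_\mathcal{Z}$, so the condition at $n=s$ forces $y^{s-1}\in IR_\mathcal{Z}$, contradicting minimality; hence $s=0$ and $IR_\mathcal{Z}=R_\mathcal{Z}$.)

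For the remaining equivalence, fix $n\geq1$. Since $(u+z)-\alpha^{n}(u+z)=u-\alpha^{n}(u)=v^{(n)}$, one has $A'(u+z)+A'\alpha^{n}(u+z)=A'(u+z)+A'v^{(n)}$, and this is all of $A'$ exactly when $u+z$ is a unit of $A'/A'v^{(n)}\cong(A/v^{(n)}A)\otimes_{\K}\K(z)=(A/v^{(n)}A)[z]_{\K[z]\setminus\{0\}}$. Writing $\bar u$ for the central image of $u$ in $A/v^{(n)}A$, the central element $\bar u+z$ is a unit of $(A/v^{(n)}A)[z]_{\K[z]\setminus\{0\}}$ if and only if the ideal $(\bar u+z)(A/v^{(n)}A)[z]$ contains a nonzero element of $\K[z]$; and a leading-coefficient argument — any $g\in(A/v^{(n)}A)[z]$ with $(\bar u+z)g\in\K[z]$ has all its coefficients in $\K[\bar u]$, so $(\bar u+z)g$ vanishes at $z=-\bar u$ — shows this happens precisely when $\bar u$ is algebraic over $\K$, i.e.\ precisely when some nonzero $p_{n}\in\K[X]$ satisfies $p_{n}(u)\in v^{(n)}A$. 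This gives the equivalence, and with it the Proposition.

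The easy ingredients are the Ore property, the identification $R[y^{-1}]\cong S[z]$ (which is what makes the simplicity of $R_{\W}$ available) and the recognition of $R_\mathcal{Z}$ as a generalized Weyl algebra; I expect the main obstacle to be the equivalence of the third paragraph, which converts comaximality of $u+z$ and $\alpha^{n}(u+z)$ over the enlarged coefficient ring $A'$ into the concrete membership $p_{n}(u)\in v^{(n)}A$ over $A$.
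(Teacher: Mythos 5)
Your argument is correct, but it takes a genuinely different route from the paper's. Both proofs begin the same way, establishing the simplicity of $R_\W$ from $R_\Y=A[y^{\pm1};\alpha][z]$ and \cite[Lemma 9.6.9]{McCR}. After that the paper stays inside $R$: for the ``if'' direction it contracts a maximal ideal of $R_\mathcal{Z}$ to a prime $P$ of $R$, takes $m$ minimal with $y^m\in P$, and descends using $xy^m-y^mx=v^{(m)}y^{m-1}$, the congruence $p_m(-z)\equiv p_m(u)\bmod Ry$ and the regularity of $p_m(-z)$ modulo $P$; for the ``only if'' direction it builds, for each $m$, the explicit right ideal $J$ spanned by the elements $x^iay^j$ with $i>0$ or $j\geq m$ or $a\in v^{(m)}A$, and extracts $p_m(u)\in J\cap A=v^{(m)}A$ from its bound $\ann_R(R/J)$. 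You instead pass to the enlarged base ring $A'=A\otimes_\K\K(z)$, recognise $R_\mathcal{Z}$ as the generalized Weyl algebra $W(A',\alpha,u+z)$, invoke the simplicity criterion of Theorem~\ref{Wsimple} (which the paper states only in Section~3 and applies only to the factors $R/(z-\lambda)R$), and then convert the comaximality condition $(u+z)A'+\alpha^n(u+z)A'=A'$ into the existence of $p_n$ by a leading-coefficient argument in $(A/v^{(n)}A)[z]$; that argument is sound, including the degenerate cases $v^{(n)}A=A$ and $v^{(n)}=0$. Your route is more conceptual and unifies the treatment of $R_\mathcal{Z}$ with that of the factors $R/(z-\lambda)R$ under one criterion, and your paragraph-three equivalence is a clean self-contained lemma that the paper leaves implicit; the costs are that you import the harder (``only if'') half of Theorem~\ref{Wsimple}, whose standard proof constructs essentially the same ideal $J$ that the paper builds by hand, and that the identification $R\simeq W(A[z],\alpha,u+z)$, while standard (cf.\ \cite[Corollary 2.6]{inv}), deserves an explicit justification or citation rather than a one-line appeal to the defining relations.
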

\begin{proof}
Assume that
for all $m\geq 1 $, there exists a non-zero polynomial $p_m(X)\in \K[X]$ such that $p_m(u)\in v^{(m)}A$. Let $\mathcal{Y}=\{y^i\}_{i\geq 0}$ and let $\mathcal{W}:=\{y^mq(z):m\geq 1, q(z)\in \mathcal{Z}\}$.  By the centrality of $\mathcal{Z}$, $\mathcal{W}$ is a right and left Ore set in $R$ and
$R_\mathcal{W}=(R_\mathcal{Y})_\mathcal{Z}=(R_\mathcal{Z})_\mathcal{Y}$. We first show that $R_\mathcal{W}$ is simple.
 The argument in \cite[1.5]{itskew}, where $A$ is commutative, is valid more generally and shows that $\mathcal{Y}$ is a right and left Ore set in $R$ and $R_\mathcal{Y}=A[y^{\pm1};\alpha][z]$.
As  $A[y^{\pm 1};\alpha]$ is simple and $Z(A[y^{\pm 1};\alpha])=\K$, it follows from \cite[Lemma 9.6.9]{McCR}, with $V=\K[z]$, that $R_\mathcal{W}$ is simple.

Now suppose that $R_\mathcal{Z}$ is not simple, let $M\neq 0$ be a maximal ideal of $R_\mathcal{Z}$ and let $P=M\cap R$. Then $\mathcal{Z}\cap P=\emptyset$ and
 $P\neq 0$. Using the centrality of $\mathcal{Z}$, it is easy to check that $P$ is a prime ideal of $R$ and that
 $q(z)$ is regular modulo $P$ for all $q(z)\in \mathcal{Z}$.   By Lemma~\ref{yIy} and the simplicity of $R_\mathcal{W}$, $y^jq(z)\in P$ for some $j\geq 0$ and some $q(z)\in \mathcal{Z}$. Hence
  $y^j\in P$ and there exists a minimal $m\geq 0$ such that $y^m\in P$.
	
Suppose  that $m\geq 1$.  By assumption, there exists a non-zero polynomial $p_m(X)\in \K[X]$ such that $p_m(u) \in v^{(m)}A$. By
\eqref{skewcomm},
$v^{(m)}y^{m-1} \in P$ whence $v^{(m)}Ay^{m-1} \subset P$ and $p_m(u)y^{m-1} \in P$. As $u$ and $xy$ commute, $(-z)^i=(u-xy)^i\equiv u^i\bmod Ry$ for $i\geq 0$ and hence $p_m(-z)\equiv p_m(u)\bmod Ry$. Therefore $p_m(-z)y^{m-1}\equiv p_m(u)y^{m-1}\bmod Ry^m$ and so, as $p_m(u)y^{m-1}\in P$ and $y^m\in P$,  we see that $p_m(-z)y^{m-1}\in P$. The regularity of $p_m(-z)$ modulo $P$ then gives that $y^{m-1}\in P$, contradicting the minimality of $m$. Thus $m=0$, $1\in P$ and $M=W$. This contradiction shows that $R_\mathcal{Z}$ is simple.

Conversely, suppose that $R_\mathcal{Z}$ is simple.  Let $m\geq 1$.
As in the proof of
\cite[Lemma 4.1]{ambiskew}, let $J$ be the $\K$-subspace of $R$ spanned by the elements of the form
$x^iay^j$ where $i>0$ or $j\geq m$ or $a\in v^{(m)}A$. Then
$J$ is a
right ideal of $R$ and $I:=\ann_{R}(R/J)$ is
an  ideal of $R$ contained in $J$ and containing $y^m$. Note that
$J\cap A=v^{(m)}A$. As $\mathcal{Z}$ is central, $IR_\mathcal{Z}$ is a non-zero ideal of the simple ring $R_\mathcal{Z}$ so, by \cite[Proposition 2.1.16(iv)]{McCR},  it follows that $p_m(-z)\in I$ for some  non-zero polynomial $p_m(X)\in \K[X]$.  Thus $p_m(u-xy)\in J$ and, as $x\in J$ and $uxy=xyu\in
J$, it follows that $p_m(u)\in J\cap A=v^{(m)}A$.
\end{proof}

\begin{rmk}\label{altcon}
In Proposition~\ref{zlocsimple}, the hypotheses  that   $Z(A[y^{\pm 1};\alpha])=\K$ and $A[y^{\pm 1};\alpha]$ is simple can be rephrased in terms of the base ring $A$.  Using \cite[Theorem 1.8.5]{McCR}, it is easy to check that these conditions are equivalent to the following three conditions:
\begin{enumerate}
\item $A$ is $\alpha$-simple;
\item $\alpha^n$ is outer for all positive integers $n$;
\item
$\{a\in Z(A):\alpha(a)=a\}=\K$.
\end{enumerate}
\end{rmk}

\begin{cor}\label{UFD}
Let $R$ be a conformal ambiskew polynomial ring of the form $R(A,\alpha,u-\alpha(u),1)$, where $u$ is central and the $\K$-algebra $A$ is a domain such that $A[y^{\pm 1},\alpha]$ is simple, $Z(A[y^{\pm 1};\alpha])=\K$ and for all $m\geq 1$, there exists a non-zero polynomial $p_m(X)\in \K[X]$ such that $p_m(u)\in v^{(m)}A$.  Then the height one prime ideals of $R$ are the ideals of the form $(z-\lambda)R$, $\lambda\in \K$, where $z$ is the Casimir element $xy-u$. Consequently $R$ is a UFD (in the sense of \cite{Chatt}).
\end{cor}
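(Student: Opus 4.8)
The plan is to read off everything from Proposition~\ref{zlocsimple} together with the observations, recorded in Definitions~\ref{GWA}, that $R=R(A,\alpha,u-\alpha(u),1)$ is a domain whenever $A$ is (it is an iterated skew polynomial ring over $A$) and that, for such $A$, each $(z-\lambda)R$ is a \emph{completely} prime ideal of $R$; moreover, since $\rho=1$, the Casimir element $z$, and hence each $z-\lambda$, lies in $Z(R)$. The hypotheses on $A$, $\alpha$ and $u$ are exactly those of Proposition~\ref{zlocsimple}, so $R_{\mathcal Z}$ is simple, where $\mathcal Z=\K[z]\backslash\{0\}$. As noted before that proposition, since $\mathcal Z$ is a multiplicatively closed set of central regular elements of the domain $R$, this is equivalent to saying that every nonzero ideal of $R$ has nonzero intersection with $\K[z]$.

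First I would use this to show that every nonzero prime of $R$ contains one of the ideals $(z-\lambda)R$. Let $P$ be a nonzero prime ideal of $R$. Then $P\cap\K[z]$ is a nonzero ideal of $\K[z]$; it is proper because $P$ is, and it is prime because the elements of $\K[z]\subseteq Z(R)$ are central (if $a,b\in\K[z]$ and $ab\in P$ then $aRb=abR\subseteq P$, so $a\in P$ or $b\in P$). Since $\K$ is algebraically closed, the nonzero prime ideals of the polynomial ring $\K[z]$ are exactly the $(z-\lambda)\K[z]$, $\lambda\in\K$. Hence $P\cap\K[z]=(z-\lambda)\K[z]$ for a (unique) $\lambda$, and therefore $(z-\lambda)R\subseteq P$.

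Next I would note that the ideals $(z-\lambda)R$ are pairwise incomparable: if $\mu\neq\lambda$, then in the nonzero ring $R/(z-\lambda)R$ the image of $z-\mu$ is the nonzero scalar $\lambda-\mu$, a unit, so $z-\mu\notin(z-\lambda)R$ and thus $(z-\mu)R\not\subseteq(z-\lambda)R$. The height computation then follows. Each $(z-\lambda)R$ is a nonzero prime, hence has height at least $1$; and if $Q$ were a prime with $0\subsetneq Q\subsetneq(z-\lambda)R$, then, by the previous paragraph, $Q$ would contain some $(z-\mu)R$, forcing $(z-\mu)R\subseteq Q\subsetneq(z-\lambda)R$, which by incomparability gives $\mu=\lambda$ and hence $(z-\lambda)R\subseteq Q$, a contradiction. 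So $(z-\lambda)R$ has height exactly $1$. Conversely, any height one prime $P$ contains some $(z-\lambda)R$ by the previous paragraph, and since $0\subsetneq(z-\lambda)R\subseteq P$ with $P$ of height $1$ we must have $P=(z-\lambda)R$. This establishes the description of the height one primes. For the final clause, we have shown that every nonzero prime ideal of $R$ contains the ideal $(z-\lambda)R$, which is generated by the central (a fortiori normal) element $z-\lambda$ and is completely prime; this is precisely the criterion of \cite{Chatt} for $R$ to be a UFD.

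I do not anticipate a serious obstacle: once Proposition~\ref{zlocsimple} is available the argument is essentially formal. The step needing the most care is the height computation — specifically the incomparability of the $(z-\lambda)R$ and the remark that a prime lying strictly below $(z-\lambda)R$ must itself contain one of these ideals — together with a check that the standing hypotheses on $R$ required by the notion of UFD in \cite{Chatt} (such as being Noetherian) are in force in the situations to which the corollary is applied.
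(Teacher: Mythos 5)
Your proposal is correct and follows essentially the same route as the paper: invoke Proposition~\ref{zlocsimple} to get simplicity of the localization at $\K[z]\backslash\{0\}$, deduce (using centrality of $z$ and algebraic closedness of $\K$ — the paper cites Lemma~\ref{yIy} with $y=1$ where you argue directly that $P\cap\K[z]$ is a nonzero prime of $\K[z]$) that every nonzero prime contains some $(z-\lambda)R$, and conclude via the complete primeness of $(z-\lambda)R$ recorded in Definitions~\ref{GWA}. Your explicit incomparability and height computation merely fills in what the paper leaves as ``so the result follows.''
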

\begin{proof}
By Proposition~\ref{zlocsimple}, the localization of $R$ at $\K[z]\backslash\{0\}$ is simple.   As $z$ is central and $\K$ is algebraically closed, it follows from Lemma~\ref{yIy}, with $y=1$, that every non-zero prime ideal  of $R$ contains $(z-\lambda)R$ for some $\lambda\in \K$. As observed in \ref{GWA}, $(z-\lambda)R$ is completely prime for each $\lambda\in \K$ so the result follows.
\end{proof}

We can apply Proposition 2.2 to obtain alternative proofs of known results, including \cite[Theorem 4.6]{itskew}   and \cite[Theorem 3.2]{Smith}.
\begin{cor}\label{uuq}
Let $A$ be either (i) $\K[t]$ where $\alpha(t)=t+\mu$ for some $\mu\in \K^*$ and $\ch(\K)=0$ or
(ii) $\K[t^{\pm 1}]$ where $\alpha(t)=qt$ for
some $q\in \K^*$ that is not a root of unity. Let $u\in A\backslash{\K}$ and let $R=R(A,\alpha,u-\alpha(u),1)$. Then the height one prime ideals of $R$ are the ideals of the form $(z-\lambda)R$, $\lambda\in \K$, where $z$ is the Casimir element $xy-u$, and every non-zero ideal of $R$ has non-zero intersection with $\K[z]$.
\end{cor}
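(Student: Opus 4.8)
The plan is to deduce both assertions from Corollary~\ref{UFD} and Proposition~\ref{zlocsimple}, so the work consists of verifying their common hypotheses for each of the two choices of $A$. That $A$ is a domain is immediate, and $u$ is automatically central since $A$ is commutative; thus what must be checked is (a) that $A[y^{\pm1};\alpha]$ is simple with $Z(A[y^{\pm1};\alpha])=\K$, and (b) that for each $m\geq1$ there is a non-zero $p_m(X)\in\K[X]$ with $p_m(u)\in v^{(m)}A$. Here, since $u$ is a splitting element and $\rho=1$, one has $v^{(m)}=u-\alpha^m(u)$.

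For (a) I would invoke Remark~\ref{altcon}, reducing to the three conditions stated there; note that $Z(A)=A$ and that an inner automorphism of a commutative ring is the identity, so ``outer'' here simply means ``different from the identity''. In case~(i), for $f\in\K[t]$ of degree $n\geq1$ with leading coefficient $a_n$ one computes $\alpha(f)-f=n\mu a_n t^{n-1}+(\text{lower order terms})$, which has degree exactly $n-1$ because $\ch\K=0$ and $\mu\neq0$; applying this to a non-zero element of minimal degree in a non-zero $\alpha$-stable ideal forces that ideal to contain a non-zero scalar (so $A$ is $\alpha$-simple), applying it to a fixed element forces the fixed ring to be $\K$, and $\alpha^n(t)=t+n\mu\neq t$ shows $\alpha^n\neq\mathrm{id}$. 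In case~(ii), $\alpha(t^i)=q^it^i$ and the scalars $q^i$ ($i\in\Z$) are pairwise distinct since $q$ is not a root of unity; a Vandermonde argument applied to $f,\alpha(f),\dots,\alpha^{r-1}(f)$, where $r$ is the number of monomials appearing in $f$, shows that any non-zero $\alpha$-stable ideal contains a monomial $t^i$, hence a unit, so $A$ is $\alpha$-simple; the same relations show $\alpha$ fixes only $\K$; and $\alpha^n(t)=q^nt\neq t$ shows $\alpha^n\neq\mathrm{id}$.

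For (b), it is enough to show that $A/v^{(m)}A$ is finite-dimensional over $\K$: then $1,\bar u,\bar u^2,\dots$ are $\K$-linearly dependent there, which yields a non-zero $p_m$ with $p_m(u)\in v^{(m)}A$. In case~(i), $v^{(m)}=u(t)-u(t+m\mu)$ has degree $\deg u-1$ (again using $\ch\K=0$, $\mu\neq0$), so $A/v^{(m)}A=\K[t]/(v^{(m)})$ is finite-dimensional. In case~(ii), $v^{(m)}=\sum_i a_i(1-q^{mi})t^i$ is non-zero (as $u\notin\K$ gives some $a_i\neq0$ with $i\neq0$, and then $1-q^{mi}\neq0$) and has zero constant term; writing $v^{(m)}=t^{j_0}\tilde w(t)$ with $\tilde w\in\K[t]$ and $\tilde w(0)\neq0$ gives $v^{(m)}A=\tilde wA$, and since $t$ is a unit modulo $\tilde w$ one checks $A/v^{(m)}A\cong\K[t]/(\tilde w)$, which is again finite-dimensional.

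With (a) and (b) verified, Corollary~\ref{UFD} identifies the height one primes of $R$ with the $(z-\lambda)R$, $\lambda\in\K$, and Proposition~\ref{zlocsimple} shows that $R_{\mathcal{Z}}$ is simple for $\mathcal{Z}=\K[z]\backslash\{0\}$; as $\mathcal{Z}$ consists of central elements, this is exactly the statement that every non-zero ideal of $R$ has non-zero intersection with $\K[z]$ (equivalently, apply Lemma~\ref{yIy} with $y=1$). The only step requiring some care is the finite-dimensionality of $A/v^{(m)}A$ in case~(ii), where one must first divide out the monomial factor $t^{j_0}$; everything else is a routine check.
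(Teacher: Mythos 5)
Your proposal is correct and follows essentially the same route as the paper: verify the conditions of Remark~\ref{altcon}, show $A/v^{(m)}A$ is finite-dimensional so that $u$ is algebraic modulo $v^{(m)}A$, and then apply Corollary~\ref{UFD} and Proposition~\ref{zlocsimple}. You merely fill in details the paper labels ``well-known'' (the $\alpha$-simplicity arguments and the monomial-factor point in case~(ii)), and for the final claim you use the centrality of $\K[z]\backslash\{0\}$ and the simplicity of $R_{\mathcal Z}$ directly where the paper invokes noetherianity; both are valid.
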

\begin{proof}
It is well-known that in both cases $A$ is $\alpha$-simple and it is clear that Conditions (ii) and (iii) of Remark~\ref{altcon} hold. Hence $A[y^{\pm 1},\alpha]$ is simple and $Z(A[y^{\pm 1};\alpha])=\K$. For $m\geq 1$, $v^{(m)}=u-\alpha^m(u)\neq 0$ so the $\K$-algebra $A/v^{(m)}A$ is finite-dimensional, $u+v^{(m)}A$ is algebraic over $\K$ and there exists a non-zero polynomial $p_m(X)\in \K[X]$ such that $p_m(u)\in v^{(m)}A$. By Corollary~\ref{UFD} the height one prime ideals of $R$ are the ideals of the form $(z-\lambda)R$, $\lambda\in \K$, and, $R$ being noetherian, it follows that every non-zero ideal of $R$ has non-zero intersection with $\K[z]$.
\end{proof}

In the  following two examples we give details of the best known examples of cases (i) and (ii) of Corollary~\ref{uuq}. They are included to illustrate our results rather than to advance understanding of the examples. We shall need to know the values of the elements $v^{(m)}$, $m\geq 1$.

\begin{ex}\label{usltwo}
Assume that $\ch(\K)=0$. Let $A$ be the polynomial algebra $\K[t]$ and let $\alpha$ be the $\K$-automorphism of $A$ such that $\alpha(t)=t+2$.  Let $\rho=1$ and let $u=\frac{-1}{4}(t-1)^2$, so that $v=t$. Then $R(A,\alpha,v,1)$ is the enveloping algebra
$U(sl_2)$, in which $x, y$ and $t$ are usually written $e, f$ and $h$. In the notation of Definitions~\ref{defambi}, the Casimir element $z$  is $\frac{1}{4}(\Omega+1)$, where $\Omega$ is the usual Casimir element as, for example, in \cite{dix}. For
$m\geq 1$, $v^{(m)}=m(t+m-1)$. In accordance with Proposition~\ref{zlocsimple} and Corollary~\ref{uuq}, the localization of $R$ at $\K[z]\backslash\{0\}$ is simple.
\end{ex}

\begin{ex}\label{usltwoq} Let $q\in \K$ and suppose that $q$ is not a root of unity. Let $A$ be the Laurent polynomial algebra $\K[t^{\pm 1}]$ and let $\alpha$ be the $\K$-automorphism of $A$ such that $\alpha(t)=q^2t$. Again, it is well-known that $A$ is $\alpha$-simple. Let $\rho=1$ and let \[u=-(q^{-1}t+qt^{-1})/(q-q^{-1})^2\text{ and that }v=u-\alpha(u)=(t-t^{-1})/(q-q^{-1}).\] Here  $R(A,\alpha,v,1)$ is the quantum enveloping algebra
$U_q(sl_2)$, for example, see \cite[Chapter I.3]{BGl}, where, as usual, $x, y$ and $t$ are written $E, F$ and $K$. The Casimir element $z$ is \[xy+(q^{-1}t+qt^{-1})/(q-q^{-1})^2\] and, for $m\geq 1$, \[v^{(m)}=((q^{2m-1}-q^{-1})t+(q^{1-2m}-q)t^{-1})/(q-q^{-1})^2.\]  In accordance with Proposition~\ref{zlocsimple}  and Corollary~\ref{uuq}, the localization of $R$ at $\K[z]\backslash\{0\}$ is simple. Note that the  version of $U_q(sl_2)$  considered in \cite[Example 2.3]{itskew} is different to the now established one considered here.
\end{ex}

In the next example, which occurs as a localization of a connected quantized Weyl algebra in \cite{cdfdaj}, $A$ is noncommutative (if $p\neq 1$) and the results of \cite{itskew,htone} on height one prime ideals do not apply.

\begin{ex}\label{quantum torus}
Let $p$ be an odd positive integer and let $q\in \K^*$. Suppose that $q$ is not a root of unity. Let $A$ be the quantum torus with generators $z_i^{\pm 1}$, $1\leq i\leq p$, subject to the relations
$z_iz_j=q_{ij}z_jz_i$ for $1\leq j<i\leq p$, where, for $i>j$,
\[q_{ij}=\begin{cases}
1\text{ if }i\text{ is odd or if }i\text{ and }j\text{ are both even,}\\q^{-1}\text{ if }i\text{ is even and }j\text{ is odd.}\end{cases}\]
 Note that $z_p$ is central in $A$.
 Let $\alpha$ be the $\K$-automorphism of $A$ such that, for $1\leq i\leq p$,
 $\alpha(z_i)=z_i$ if $i$ is even and $\alpha(z_i)=q^{-1}z_i$ if $i$ is odd. The skew Laurent polynomial ring
$S=A[y^{\pm 1};\alpha]$ is a quantum torus in $p+1$ generators $z_i^{\pm 1}$, $1\leq i\leq p+1$, where $z_{p+1}=y$. It follows from \cite[Proposition 1.3]{McCP}, that $S$ is simple
and has centre $\K$. See \cite[Lemma 3.7]{cdfdaj} for more detail.

Let \[v=(1-q)(q^\frac{p-1}{2}z_{p}^{-1} - z_{p})\in Z(A)\] and observe that \[v=u-\alpha(u)\text{, where }
    u=q^{\frac{p-1}{2}}z_{p}^{-1}+qz_{p}.\] Thus $R:=R(A,\alpha,v,1)$ is conformal with Casimir element $z=xy-u$.
Let $m\geq 1$. Then \[v^{(m)}=u-\alpha^m(u)=(1-q^m)(q^{\frac{p-1}{2}}z_{p}^{-1}-q^{1-m}z_{p})\] so
    \[z_{p}^2\equiv q^{\frac{p+2m-3}{2}}\bmod{(v^{(m)}A)}\text{ and }z_{p}^{-2}\equiv q^{-\frac{p+2m-3}{2}}\bmod{(v^{(m)}A)}.\]
    Hence \begin{eqnarray*}
    u^2&=&q^{p-1}z_{p}^{-2}+2q^{\frac{p+1}{2}}+q^2z_{p}^{2}\\
    &\equiv&q^{\frac{p+1}{2}}(q^{-m}+2+q^m)\mod v^{(m)}A.
    \end{eqnarray*}
    Thus $p_m(u)\in v^{(m)}A$ where $p_m(X)=X^2-\sigma$ and $\sigma=q^{\frac{p+1}{2}}(q^{-m}+2+q^m)$.
    By Proposition~\ref{zlocsimple}, every non-zero prime ideal of $R$ has non-zero intersection with $\K[z]$ and, by Corollary~\ref{UFD}, every height one prime ideal has the form $(z-\lambda)R$, $\lambda\in \K$.
\end{ex}

The next result is a generalization of Proposition~\ref{zlocsimple}, which is the case $t=0$, and is applicable to other algebras in which every ideal intersects the centre non-trivially.

\begin{prop}
\label{zlocsimple2}
Let $B$ be a $\K$-algebra with a $\K$-automorphism $\alpha$ such that $B[y^{\pm 1};\alpha]$ is simple and $Z(B[y^{\pm 1};\alpha])=\K$.
Let $t\geq 0$ be an integer and let $A$ be the polynomial algebra $B[c_1,\dots,c_t]$ in $t$ algebraically independent commuting indeterminants. Extend $\alpha$ to a $\K$-automorphism of $A$ by setting $\alpha(c_i)=c_i$ for $1\leq i\leq t$. Let $u\in A$, let $v=u-\alpha(u)$ and, in the conformal ambiskew polynomial ring $R=R(A,\alpha,v,1)$, let $z$ be the Casimir element $xy-u$.

{\rm (i)} $Z(A[y^{\pm 1};\alpha])=\K[c_1,\dots,c_t]$ and $Z(R)$ is the polynomial algebra $\K[z,c_1,\dots,c_t]$.

{\rm (ii)} Let $\mathcal{Z}=Z(R)\backslash{0}$. The localization $R_\mathcal{Z}$ is simple if and only if, for all $m\geq 1$, there exists a non-zero polynomial $p_m(X,X_1,\dots,X_t)\in \K[X,X_1,\dots,X_t]$ such that $p_m(u,c_1,\dots,c_t)\in v^{(m)}A$.
\end{prop}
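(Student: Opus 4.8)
I would prove the two parts separately; part (i) is the new ingredient, while for part (ii) I would adapt the proof of Proposition~\ref{zlocsimple}, replacing $\K[z]$ by $\K[z,c_1,\dots,c_t]$ throughout. For (i), the plan is first to note that, since $\alpha$ fixes each $c_i$ and the $c_i$ are central and algebraically independent, $A[y^{\pm 1};\alpha]$ is the polynomial ring $B[y^{\pm 1};\alpha][c_1,\dots,c_t]$, and so its centre is $Z(B[y^{\pm 1};\alpha])[c_1,\dots,c_t]=\K[c_1,\dots,c_t]$. Next, $z$ is central in $R$ because $\rho=1$, and each $c_i$, being central in $A$ and fixed by $\alpha$, commutes with $x$ and with $y$, hence lies in $Z(R)$; so $\K[z,c_1,\dots,c_t]\subseteq Z(R)$. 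For the reverse inclusion I would use the generalisation of \cite[1.5]{itskew} invoked in Proposition~\ref{zlocsimple}, namely that $\mathcal{Y}=\{y^i\}_{i\ge 0}$ is a right and left Ore set and $R_\mathcal{Y}=A[y^{\pm 1};\alpha][z]$ is a polynomial ring in the central indeterminate $z$; then $Z(R)\subseteq Z(R_\mathcal{Y})=Z(A[y^{\pm 1};\alpha])[z]=\K[c_1,\dots,c_t,z]$, giving equality. Finally this ring is a polynomial algebra in $t+1$ indeterminates, since $z$ is transcendental over $A[y^{\pm 1};\alpha]$, which contains the algebraically independent elements $c_1,\dots,c_t$.

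For the ``if'' direction of (ii) I would set $\mathcal{W}=\{y^iq:i\ge 1,\ q\in\mathcal{Z}\}$, a right and left Ore set by centrality of $\mathcal{Z}$, with $R_\mathcal{W}=(R_\mathcal{Y})_\mathcal{Z}=(R_\mathcal{Z})_\mathcal{Y}$. Since $R_\mathcal{Y}=B[y^{\pm 1};\alpha][c_1,\dots,c_t,z]$ and $\mathcal{Z}$ consists of the nonzero elements of $\K[c_1,\dots,c_t,z]$, one gets $R_\mathcal{W}=B[y^{\pm 1};\alpha]\otimes_\K\K(c_1,\dots,c_t,z)$, which is simple by \cite[Lemma 9.6.9]{McCR} (applied with $V=\K[c_1,\dots,c_t,z]$), as $B[y^{\pm 1};\alpha]$ is simple with centre $\K$. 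If $R_\mathcal{Z}$ were not simple I would take a nonzero maximal ideal $M$ and put $P=M\cap R$; exactly as in Proposition~\ref{zlocsimple}, $P$ is then a nonzero prime disjoint from $\mathcal{Z}$, every element of $\mathcal{Z}$ is regular modulo $P$, and by Lemma~\ref{yIy} and the simplicity of $R_\mathcal{W}$, $y^jq\in P$ for some $j\ge 0$ and $q\in\mathcal{Z}$, whence $y^j\in P$. Taking $m$ minimal with $y^m\in P$ (so $m\ge 1$, as $1\notin P$), I would use \eqref{skewcomm} to obtain $v^{(m)}y^{m-1}=xy^m-y^mx\in P$ and then, using the centrality of $v^{(m)}$ in $A$ and the hypothesis, $p_m(u,c_1,\dots,c_t)y^{m-1}\in P$; since $u$ and the $c_i$ commute with $xy$, reduction modulo $Ry$ replaces $-z=u-xy$ by $u$ and fixes the $c_i$, so that $p_m(-z,c_1,\dots,c_t)\equiv p_m(u,c_1,\dots,c_t)\bmod Ry$ and hence $p_m(-z,c_1,\dots,c_t)y^{m-1}\in P$ (using $y^m\in P$). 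As $p_m(-z,c_1,\dots,c_t)$ is a nonzero element of $\mathcal{Z}$ (again using that $\K[z,c_1,\dots,c_t]$ is a polynomial ring), it is regular modulo $P$, so $y^{m-1}\in P$, contradicting minimality.

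For the ``only if'' direction, assuming $R_\mathcal{Z}$ simple and fixing $m\ge 1$, I would, exactly as in \cite[Lemma 4.1]{ambiskew}, take $J$ to be the $\K$-span of the monomials $x^iay^j$ with $i>0$ or $j\ge m$ or $a\in v^{(m)}A$, so that $J$ is a right ideal, $I:=\ann_R(R/J)$ is a two-sided ideal with $y^m\in I\subseteq J$, and $J\cap A=v^{(m)}A$. Since $IR_\mathcal{W}$ contains the unit $y^m$ of the simple ring $R_\mathcal{W}$, the ideal $IR_\mathcal{Z}$ of $R_\mathcal{Z}$ is nonzero, hence equals $R_\mathcal{Z}$, so there is a nonzero $q\in I\cap\mathcal{Z}$; writing $q=p_m(-z,c_1,\dots,c_t)$ for a nonzero polynomial $p_m$, we have $p_m(u-xy,c_1,\dots,c_t)\in J$, and since $u$ and the $c_i$ commute with $xy$ while $(xy)^k\in J$ for every $k\ge 1$ ($J$ being a right ideal that contains $xy$), expanding and discarding the terms involving $xy$ gives $p_m(u,c_1,\dots,c_t)\in J\cap A=v^{(m)}A$. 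The step I expect to need the most care is the identification $R_\mathcal{W}=B[y^{\pm 1};\alpha]\otimes_\K\K(c_1,\dots,c_t,z)$ together with the verification that \cite[Lemma 9.6.9]{McCR} still applies with $V$ a several-variable polynomial algebra in place of $\K[z]$; once the $c_i$ are seen to be central in $R$ and to commute with $xy$, the remainder is a routine transcription of the proof of Proposition~\ref{zlocsimple}.
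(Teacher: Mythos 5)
Your proposal is correct and follows essentially the same route as the paper: part (ii) is the paper's own adaptation of the proof of Proposition~\ref{zlocsimple} with $\K[z]$ replaced by $\K[z,c_1,\dots,c_t]$, using the same Ore sets $\mathcal{Y}$ and $\mathcal{W}$, the same minimal-$m$ contradiction via regularity of $p_m(-z,c_1,\dots,c_t)$ modulo $P$, and the same right ideal $J$ with $I=\ann_R(R/J)$ for the converse. The paper dismisses part (i) as straightforward, and your argument via $Z(R)\subseteq Z(R_\mathcal{Y})=Z(A[y^{\pm 1};\alpha])[z]=\K[z,c_1,\dots,c_t]$ is a correct way of supplying that omitted detail.
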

\begin{proof}
(i) is straightforward.

(ii) We adapt the proof of Proposition~\ref{zlocsimple} with $\mathcal{Y}=\{y^i\}_{i\geq 0}$,
$R_\mathcal{Y}=A[y^{\pm1};\alpha][z]=B[y^{\pm1};\alpha][z,c_1,\dots,c_t]$, $\mathcal{W}=\{y^mq(z,c_1,\dots,c_t):m\geq 1, q(z,c_1,\dots,c_t)\in \mathcal{Z}\}$  and
$R_\mathcal{W}=(R_\mathcal{Y})_\mathcal{Z}=(R_\mathcal{Z})_\mathcal{Y}$, which is simple.

Assume that, for all $m\geq 1$, there exists a non-zero polynomial $p_m(X,X_1,\dots,X_t)\in \K[X,X_1,\dots,X_t]$ such that $p_m(u,c_1,\dots,c_t)\in v^{(m)}A$. Suppose that $R_\mathcal{Z}$ is not simple, let $M\neq 0$ be a maximal ideal of $R_\mathcal{Z}$ and let $P=M\cap R$. Then $\mathcal{Z}\cap P=\emptyset$,
 $P\neq 0$ and, using the centrality of $\mathcal{Z}$, it is easy to check that $P$ is a prime ideal of $R$ and that $q(z,c_1,\dots,c_t)$ is regular modulo $P$ for all
$q(z,c_1,\dots,c_t)\in \mathcal{Z}$.   By Lemma~\ref{yIy} and the simplicity of $R_\mathcal{W}$, $y^jq(z,c_1,\dots,c_t)\in P$ for some $j\geq 0$ and some $q(z,c_1,\dots,c_t)\in \mathcal{Z}$. Hence $y^j\in P$. Let $m\geq 0$ be is minimal such that $y^m\in P$. As $P$ is proper, $m\geq 1$.  By assumption, there exists a non-zero polynomial $p_m(X,X_1,\dots,X_t)\in \K[X,X_1,\dots,X_t]$ such that $p_m(u,c_1,\dots,c_t) \in v^{(m)}A$. As in the proof of Proposition~\ref{zlocsimple},
$p_m(u,c_1,\dots,c_t)y^{m-1} \in P$,  $p_m(-z,c_1,\dots,c_t)\equiv p_m(u,c_1,\dots,c_t)\bmod Ry$, $p_m(-z,c_1,\dots,c_t)y^{m-1}\equiv p_m(u,c_1,\dots,c_t)y^{m-1}\bmod Ry^m$,  $p_m(-z,c_1,\dots,c_t)y^{m-1}\in P$ and $y^{m-1}\in P$, contradicting the minimality of $m$. It follows that  $R_\mathcal{Z}$ is simple.

Conversely, suppose that $R_\mathcal{Z}$ is simple.  Let $m\geq 1$.
As in the proof of Proposition~\ref{zlocsimple}, if $J$ denotes the $\K$-subspace of $R$ spanned by the elements of the form
$x^iay^j$ where $i>0$ or $j\geq m$ or $a\in v^{(m)}A$ then
$J$ is a
right ideal of $R$ and $I:=\ann_{R}(R/J)$ is
an  ideal of $R$ contained in $J$ and containing $y^m$. Also
$J\cap A=v^{(m)}A$. As $\mathcal{Z}$ is central, $IR_\mathcal{Z}$ is a non-zero ideal of the simple ring $R_\mathcal{Z}$ so, by \cite[Proposition 2.1.16(iv)]{McCR},  it follows that $p_m(-z,c_1,\dots,c_t)\in I$ for some  non-zero polynomial $p_m(X,X_1,\dots,X_t)\in \K[X,X_1,\dots,X_t]$.
 Thus $p_m(u-xy,c_1,\dots,c_t)\in J$ and, as $x\in J$ and $uxy=xyu\in
J$, it follows that $p_m(u,c_1,\dots,c_t)\in J\cap A=v^{(m)}A$.
\end{proof}

We next look at a class of algebras, introduced by Terwilliger and Worawannotai \cite{terwor}, to which Proposition~\ref{zlocsimple2} applies with $t=1$.

\begin{ex}
Let $A=\K[c,k^{\pm 1}]$, let $q\in \K^*$ and suppose that $q$ is not a root of unity. Let $\alpha$ be the $\K$-automorphism such that
$\alpha(k)=q^2 k$ and $\alpha(c)=c$. Fix a non-zero integer $n$ and a Laurent polynomial $f(k)=\sum a_ik^i\in \K[k,k^{-1}]$, such that $a_n=0$.
Let \[u=ck^n+f(k)\text{ and }v=u-\alpha(u)=(1-q^{2n})ck^n+\sum b_ik^i,\] where each $b_i=(1-q^{2i})a_i$. In particular $b_0=0$.
Then $R=R(A,\alpha,v,1)$ is generated by $k^{\pm 1}, c, x$ and $y$ subject to the relations
\begin{eqnarray}
ck&=&kc,\quad xc=cx,\quad yc=cy,\\
kk^{-1}&=&1=k^{-1}k,\\
xk&=&q^{-2}kx,\quad yk=q^{2}ky,\\
xy-yx&=&(1-q^{2n})ck^n+\sum b_ik^i.\label{xyyx}
\end{eqnarray}
By \eqref{xyyx},
\[
c=(1-q^{2n})^{-1}(xy-yx-\sum b_ik^i)k^{-n}
\]
so, as a generator, $c$ is redundant.
Substituting the above expression for $c$ in the relations $xc=cx$ and $cy=yc$ gives two relations in $x, y$ and $k$ that are cubic in $x, y$. Then $R$ is generated by $k^{\pm 1}, x$ and $y$ subject to these two relations and
\begin{eqnarray}
kk^{-1}&=&1=k^{-1}k,\\
xk&=&q^{-2}kx,\quad yk=q^{2}ky,\\
xy-yx&=&(1-q^{2n})ck^n+\sum b_ik^i.
\end{eqnarray}

This corresponds to the presentation in \cite[Definition 2.1]{terwor}, but the generators there are $e=q^{-t}k^{s}x$ and $f=y$, where $t-s=n$. Following \cite{terwor}, we shall refer to $R$ as an \emph{augmented down-up algebra}.

By the construction above, $R$ is conformal with central Casimir element
$z=xy-u$ and it is readily checked that $Z(R)=\K[c,z]$.  For $m\geq 1$,
\[v^{(m)}=(1-q^{mn})ck^n+\sum (1-q^{2im})a_ik^i\]
so $A/v^{(m)}A\simeq \K[k^{\pm 1}]$ which is an integral domain of transcendence degree $1$. Hence there exists a non-zero polynomial $p(X,Y)\in \K[X,Y]$ such that $p(u,c)\in v^{(m)}A$. Applying Proposition \ref{zlocsimple2}, we obtain the following result.
\end{ex}

\begin{prop}\label{centlocsimple}
If $R$ is an augmented down-up algebra then every non-zero ideal of $R$ has non-zero intersection with $Z(R)$ and the localization of $R$ at $Z(R)\backslash\{0\}$ is simple.
\end{prop}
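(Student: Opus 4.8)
The plan is to obtain this as a direct application of Proposition~\ref{zlocsimple2} with $t=1$ to the augmented down-up algebra $R=R(A,\alpha,v,1)$ constructed just above, in which $A=B[c]$ with $B=\K[k^{\pm1}]$, $\alpha(k)=q^2k$ and $\alpha(c)=c$, and $u=ck^n+f(k)$, $v=u-\alpha(u)$. Everything reduces to checking the two hypotheses of that proposition: that $B[y^{\pm1};\alpha]$ is simple with centre $\K$, and that for each $m\geq 1$ there is a non-zero $p_m(X,X_1)\in\K[X,X_1]$ with $p_m(u,c)\in v^{(m)}A$.

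For the first hypothesis, I would observe that $B[y^{\pm1};\alpha]$ is the quantum torus $\K_{q^2}[k^{\pm1},y^{\pm1}]$; since $q$, and hence $q^2$, is not a root of unity, this ring is simple with centre $\K$, either by \cite[Proposition 1.3]{McCP} or by verifying the three conditions of Remark~\ref{altcon} for the pair $(B,\alpha)$ (the ring $\K[k^{\pm1}]$ is $\alpha$-simple, every power of $\alpha$ is outer on it, and its only $\alpha$-fixed central elements are the scalars). Since $A=B[c]$ is obtained from $B$ by adjoining one central indeterminate fixed by $\alpha$, this places $R$ exactly in the setting of Proposition~\ref{zlocsimple2} with $t=1$ and $c_1=c$. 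For the polynomial condition, the computation recorded above gives $v^{(m)}=(1-q^{mn})ck^n+\sum_i(1-q^{2im})a_ik^i$; because $q$ is not a root of unity and $n\neq 0$, the coefficient $1-q^{mn}$ is non-zero, so modulo $v^{(m)}A$ the generator $c$ equals an explicit Laurent polynomial in $k$, whence $A/v^{(m)}A\cong\K[k^{\pm1}]$, an integral domain of transcendence degree $1$ over $\K$. The images of $u$ and $c$ in this quotient are therefore algebraically dependent over $\K$, which produces the required non-zero $p_m$.

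With both hypotheses verified, Proposition~\ref{zlocsimple2}(ii) gives that $R_\mathcal{Z}$ is simple, where $\mathcal{Z}=Z(R)\backslash\{0\}=\K[c,z]\backslash\{0\}$. To conclude, I would invoke Lemma~\ref{yIy} with $y=1$ (so that the ring $B$ of that lemma is $R$, the set $\mathcal{Z}$ is the present set of central elements, $\W=\mathcal{Z}$ and $C=R_\mathcal{Z}$): for any non-zero ideal $I$ of $R$, simplicity of $R_\mathcal{Z}$ forces some element of $\mathcal{Z}$ to lie in $I$, that is, $I\cap Z(R)\neq 0$. No step here is genuinely difficult, since the groundwork was laid when the example was set up; the one point that requires a little care is the identification $A/v^{(m)}A\cong\K[k^{\pm1}]$, which is precisely where the hypotheses "$q$ is not a root of unity" and "$n\neq 0$" enter.
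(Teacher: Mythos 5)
Your proposal is correct and follows essentially the same route as the paper: the paper's proof consists precisely of the computation in the preceding example (checking $Z(R)=\K[c,z]$, that $A/v^{(m)}A\simeq\K[k^{\pm1}]$ has transcendence degree $1$ so that the images of $u$ and $c$ are algebraically dependent) followed by an appeal to Proposition~\ref{zlocsimple2} with $t=1$. Your extra details (verifying simplicity and triviality of the centre of $\K_{q^2}[k^{\pm1},y^{\pm1}]$, noting $1-q^{mn}\neq0$, and passing from simplicity of the central localization to the intersection statement) are exactly the points the paper leaves implicit.
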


\begin{cor}
An augmented down-up algebra $R$ is a UFD (in the sense of \cite{Chatt}).
\end{cor}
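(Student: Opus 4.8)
The plan is to follow the proof of Corollary~\ref{UFD} almost verbatim, replacing the centre $\K[z]$ there by the larger centre $Z(R)=\K[c,z]$ here. First, $R=\K[c,k^{\pm1}][y;\alpha][x;\beta,\delta]$ is a Noetherian domain, being an iterated skew polynomial extension of the Noetherian domain $A=\K[c,k^{\pm1}]$. Let $P$ be a nonzero prime ideal of $R$. By Proposition~\ref{centlocsimple}, $P\cap Z(R)\neq 0$; choosing a nonzero non-unit in this proper ideal, factoring it into irreducibles in the polynomial algebra, hence UFD, $\K[c,z]$, and using that these factors are central together with the primeness of $P$, we find an irreducible $g=g(c,z)\in P$. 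Since $g$ is central, $gR=Rg$ is a nonzero proper ideal of $R$ contained in $P$, so it suffices to show that $gR$ is completely prime.

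For this I would use the $\Z$-grading on $R$ with $\deg x=-1$, $\deg y=1$ and $\deg A=0$, which is compatible with $xy-yx=v\in A$. Its degree-zero component is $R_0=A[z]$, and $R_i=R_0y^i$, $R_{-i}=R_0x^i$ for $i\geq 0$. Here $z$ is transcendental over $A$ (this is part of the structure established in the proof of Proposition~\ref{zlocsimple2}, where $R_\mathcal{Y}=A[y^{\pm1};\alpha][z]$), so $R_0$ is the commutative domain $\K[c,z][k^{\pm1}]$. As $g\in R_0$, the ideal $gR$ is homogeneous with $gR\cap R_0=gR_0$, and $\overline R:=R/gR$ is $\Z$-graded with degree-zero component $\overline R_0=R_0/gR_0=(\K[c,z]/(g))[k^{\pm1}]$, which is a domain because $g$ is irreducible. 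The automorphism $\alpha$ fixes $c$ and $z$, hence fixes $g$ and descends to an automorphism $\overline\alpha$ of $\overline R_0$. Writing $u'=u+z$, the identities $xy=u'$, $yx=\alpha(u')$, $ya=\alpha(a)y$ and $xa=\alpha^{-1}(a)x$ for $a\in R_0$ exhibit $\overline R$ as the generalized Weyl algebra $W(\overline R_0,\overline\alpha,\overline{u'})$; the graded components match because each $R_i$ is free of rank one over $R_0$, so $R/gR$ is not a proper quotient of this generalized Weyl algebra.

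Finally, $u'=z+ck^n+f(k)$ is monic of degree one as a polynomial in $z$ over $\K[c,k^{\pm1}]$ and involves $k$ nontrivially since $n\neq 0$, so it is not divisible by $g$ in $R_0$; hence its image $\overline{u'}$ is a nonzero, and therefore regular, element of the domain $\overline R_0$. A generalized Weyl algebra over a commutative domain with nonzero defining element is a domain, by its $\Z$-grading, as recorded in Definitions~\ref{GWA}: products of nonzero homogeneous elements are nonzero because $\overline x^m\overline y^m=\prod_{l=0}^{m-1}\overline\alpha^{-l}(\overline{u'})$ and $\overline y^m\overline x^m=\prod_{l=1}^{m}\overline\alpha^{l}(\overline{u'})$ are nonzero. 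Thus $R/gR$ is a domain, $gR$ is completely prime, and every nonzero prime of $R$ contains a nonzero completely prime principal ideal, so $R$ is a UFD in the sense of \cite{Chatt}.

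The substantive point, and the step most in need of care, is the middle paragraph: one must be sure that $R_0=A[z]$ really is the commutative domain $\K[c,z][k^{\pm1}]$ (i.e.\ that $z$ is transcendental over $A$), that the homogeneous components of $R/gR$ remain free of rank one over $\overline R_0$ so that $R/gR$ is genuinely the stated generalized Weyl algebra rather than a proper image, and that $u+z$ does not become a zero divisor modulo $g$. None of this is deep, but each is a place where a careless argument could slip; the surrounding structure is a routine transcription of the proof of Corollary~\ref{UFD}.
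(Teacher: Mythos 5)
Your proposal is correct and follows essentially the same route as the paper: the paper also extracts a central irreducible $f\in\K[c,z]$ lying in the given prime via the simple central localization, identifies $R$ with the generalized Weyl algebra $W(B,\alpha,u)$ over $B=\K[c,k^{\pm1},z]$ (citing \cite[Corollary 2.6]{inv} rather than rederiving it from the $\Z$-grading as you do), and then passes to $R/fR\simeq W(B/fB,\overline{\alpha},\overline{u})$ via Lemma~\ref{GWAiso} to conclude it is a domain. Your explicit verification that the image of $xy=u+z$ remains nonzero, hence regular, in $B/fB$ is a detail the paper leaves implicit but which is genuinely needed for the quotient generalized Weyl algebra to be a domain.
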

\begin{proof}
Certainly $R$ is a domain. It follows from Proposition~\ref{zlocsimple} that if $P$ is a height one prime ideal of $R$ then $f\in P$ for some irreducible element $f\in \K[c,z]$. It remains to show that $fR$ is completely prime. By \cite[Corollary 2.6]{inv}, $R$ is isomorphic to the generalized Weyl algebra $W=W(B,\alpha,u)$, where
$B=\K[c,k^{\pm 1},z]$, $\alpha(c)=c$, $\alpha(k)=q^2k$ and $\alpha(z)=z$.
Applying Lemma~\ref{GWAiso} below, with $I=fB$, we see that $R/fR$ is a generalized Weyl algebra over the domain $B/fB$ and hence is a domain.
\end{proof}

\begin{lemma}\label{GWAiso}
Let $W=W(A,\alpha,u)$ be a generalized Weyl algebra and let $I$ be an ideal of $A$ such that $I=\alpha(I)$. Then $IW$ is an ideal of $W$ and  $W/IW\simeq  W(A/I,\overline{\alpha},\overline{u})$, where $\overline{\alpha}$ is the automorphism of $A/I$ induced by $\alpha$ and $\overline{u}=u+I$.
\end{lemma}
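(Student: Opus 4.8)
The plan is to verify directly that $IW$ is a two-sided ideal, then construct the isomorphism $W/IW \simeq W(A/I,\overline\alpha,\overline u)$ by invoking the universal property of the target generalized Weyl algebra. First I would use the $\Z$-grading $W=\bigoplus_{i\in\Z}W_i$ from Definitions~\ref{GWA}, where $W_0=A$, $W_i=AY^i$ for $i>0$ and $W_{-i}=AX^i$ for $i>0$. Since $I$ is an ideal of $A$ with $\alpha(I)=I$ (and hence also $\beta(I)=I$), the relations $Ya=\alpha(a)Y$ and $Xa=\beta(a)X$ show that $YI=\alpha(I)Y=IY$ and $XI=\beta(I)X=IX$; iterating, $Y^iI=IY^i$ and $X^iI=IX^i$ for all $i\geq 1$. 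Therefore $IW=\bigoplus_i IW_i$ is stable under left and right multiplication by $A$, $X$ and $Y$, so it is a two-sided ideal of $W$, and the quotient inherits a $\Z$-grading with $(W/IW)_0\simeq A/I$.

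Next I would produce the isomorphism. Write $\overline A=A/I$, let $\overline\alpha$ be the automorphism of $\overline A$ induced by $\alpha$ (well-defined since $\alpha(I)=I$), and $\overline u=u+I$, which is central in $\overline A$. In $W/IW$ the images $\overline X,\overline Y$ of $X,Y$ satisfy $\overline Y\,\overline a=\overline\alpha(\overline a)\overline Y$, $\overline X\,\overline a=\overline\beta(\overline a)\overline X$, $\overline Y\,\overline X=\overline{\alpha(u)}=\overline\alpha(\overline u)$ and $\overline X\,\overline Y=\overline u$. By the defining presentation of a generalized Weyl algebra (Definitions~\ref{GWA}), there is a ring homomorphism $\phi\colon W(\overline A,\overline\alpha,\overline u)\to W/IW$ fixing $\overline A$ and sending $X\mapsto \overline X$, $Y\mapsto\overline Y$. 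Conversely, the composite $W\to W(\overline A,\overline\alpha,\overline u)$ (from the presentation of $W$, sending $A\twoheadrightarrow\overline A$ and $X\mapsto X$, $Y\mapsto Y$, which is consistent because the relations of $W$ map to the relations of $W(\overline A,\overline\alpha,\overline u)$) kills $IW$ and so factors through $\psi\colon W/IW\to W(\overline A,\overline\alpha,\overline u)$. The maps $\phi$ and $\psi$ are mutually inverse on the generators, hence inverse isomorphisms.

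Alternatively, and perhaps more cleanly, I would argue via the grading: $W(\overline A,\overline\alpha,\overline u)=\bigoplus_i \overline A\,Y^i \oplus \bigoplus_{i>0}\overline A\,X^i$ as a left $\overline A$-module (this freeness is part of the structure theory of generalized Weyl algebras over a domain, or more generally holds by the standard normal form), and $\phi$ sends the degree-$i$ component $\overline A Y^i$ onto $(W/IW)_i = (AY^i)/(IY^i)\simeq \overline A Y^i$, an isomorphism in each degree; summing over $i$ gives that $\phi$ is bijective. The only genuinely delicate point is making sure the normal-form/freeness statement for $W(\overline A,\overline\alpha,\overline u)$ is available in the generality needed (here $\overline A=B/fB$ may not be a domain in the intended application, but $B/fB$ \emph{is} a domain when $f$ is irreducible in the UFD $B$, which is the case used in the corollary); in the fully general statement of the lemma one should cite the standard construction of $W(A,\alpha,u)$ as a free left $A$-module on $\{\dots,X^2,X,1,Y,Y^2,\dots\}$ from \cite{vlad1}. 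With that in hand the degree-by-degree bijectivity of $\phi$ is immediate and the proof is complete.
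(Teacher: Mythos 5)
Your proposal is correct and takes essentially the same route as the paper, which simply writes down the degree-by-degree map $(\sum a_iY^i + \sum a_{-j}X^j)+IW \mapsto \sum \overline{a_i}Y^i + \sum \overline{a_{-j}}X^j$ and declares the verification routine. Your two-sided universal-property argument (and the graded, degree-by-degree check of bijectivity) is just a careful spelling-out of that same isomorphism, and your caveat about invoking the free left $A$-module normal form from \cite{vlad1} is exactly the right thing to cite to make the ``routine'' check rigorous.
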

\begin{proof}
It is routine to check that an isomorphism is given by
\[(a_i Y^i+\dots+a_0+\dots a_{-j}X^j)+IW\mapsto (\overline{a_i} Y^i+\dots \overline{a_0} + \overline{a_{-j}}X^j) ,\]
where, for $i\in \Z$, $\overline{a_i}=a_i+I$.
\end{proof}

\section{Families of exceptional simple factors}
Although  the results of this section are more widely applicable, they are aimed at the case where $R$ satisfies the hypotheses and the simplicity criterion  of Proposition~\ref{zlocsimple}. Examples include Examples \ref{usltwo}, \ref{usltwoq} and \ref{quantum torus}. We continue to assume that $\K$ is algebraically closed so that every height one prime ideal $P$ of $R$ has the form $(z-\lambda)R$ with $\lambda\in \K$. The factor $R/(z-\lambda)R$ is then the generalized Weyl algebra $W(A,\alpha,u+\lambda)$ and the following
result from \cite{vlad5} is applicable. An earlier version appeared in \cite{primitive}, where $A$ is commutative, and a more general version is \cite[Theorem 5.4]{ambiskew}.
\begin{theorem}\label{Wsimple} Let $\alpha$  be a $\K$-automorphism of an $\K$-algebra $A$, let $u\in A$ be central  and let $W$ be the generalized Weyl algebra $W(A,\alpha,u)$.
\label{t-thm} Then $W$ is simple if and only if
\begin{enumerate}
\item  $A$ is $\alpha$-simple;
\item   $\alpha^{m}$ is outer for all $m\geq 1$;
\item  $u$ is regular;
\item   $uA+\alpha^{m}(u)A=A$ for all $m\geq 1$.
\end{enumerate}
\end{theorem}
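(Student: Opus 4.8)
The plan is to read off conditions (1) and (2) from the known simplicity criterion for the skew Laurent ring $S:=A[Y^{\pm1};\alpha]$, which by \ref{GWA} is the localization of $W=W(A,\alpha,u)$ at the Ore set $\{Y^i:i\geq1\}$, and to treat (3) directly and (4) as the genuinely generalized‑Weyl‑algebra input. First I would record a bridge: if $u$ is regular then so is each $\alpha^j(u)$, and a short argument on the top and bottom homogeneous components in the $\Z$‑grading of \ref{GWA} shows that $Y$ is a regular element of $W$; hence $W$ embeds in $S$, and since $\{Y^i\}$ consists of regular elements, every nonzero ideal of $S$ contracts to a nonzero ideal of $W$ (clear the $Y$‑denominators). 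Thus $W$ simple forces $S$ simple, and by \cite[Theorem 1.8.5]{McCR} (compare Remark~\ref{altcon}) $S$ is simple if and only if $A$ is $\alpha$‑simple and $\alpha^m$ is outer for all $m\geq1$, i.e. iff (1) and (2) hold. So, once (3) is available, (1) and (2) come essentially for free in both directions.

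For the converse, assume (1)--(4). Given a nonzero ideal $L$ of $W$: by (1), (2), (3) the ring $S$ is simple and $W\hookrightarrow S$, so the extension $LS$ is a nonzero ideal of $S$, hence $LS=S$, and clearing denominators gives $Y^n\in L$ for some $n\geq0$; take $n$ minimal. If $n\geq1$, then by the skew‑commutation identities \eqref{skewcomm} we have $XY^n=uY^{n-1}\in L$ and $Y^nX=\alpha^n(u)Y^{n-1}\in L$. Condition (4) at $m=n$ provides $b,c\in A$ with $ub+\alpha^n(u)c=1$; multiplying $uY^{n-1}$ and $\alpha^n(u)Y^{n-1}$ on the right by the appropriate $\alpha$‑translates of $b$ and $c$ to carry them across $Y^{n-1}$ and adding, we obtain $(ub+\alpha^n(u)c)Y^{n-1}=Y^{n-1}\in L$, contradicting minimality. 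Hence $n=0$, $1\in L$, and $L=W$.

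For the forward direction I would verify (3) and (4) by producing proper nonzero ideals when they fail. For (3): if $u$ is not regular then, since $u$ is central, $\mathfrak n:=\ann_A(u)$ is a nonzero two‑sided ideal of $A$; fixing $0\neq n\in\mathfrak n$, consider the two‑sided ideal $L=W(Yn)W\ni Yn\neq0$. A graded computation shows $W_a(Yn)W_b=0$ whenever $a+b=-1$: for $a=k\geq0$ the product collapses, via $Y^{k+1}X^{k+1}=\prod_{l=1}^{k+1}\alpha^l(u)$, to a multiple of $\alpha^{k+1}(n)\prod_{l=1}^{k+1}\alpha^l(u)$, which vanishes because $\alpha^{k+1}(n)\alpha^{k+1}(u)=\alpha^{k+1}(nu)=0$; for $a=-p<0$ it collapses, via $X^pY=\alpha^{1-p}(u)X^{p-1}$, to a multiple of $\alpha^{1-p}(nu)=0$. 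Hence $L\cap A=0$, so $L$ is proper, a contradiction; thus (3) holds, and with it (1) and (2) by the bridge. For (4): if $uA+\alpha^m(u)A\subsetneq A$ for some $m\geq1$, consider $L=WY^mW\ni Y^m\neq0$. A similar computation shows every homogeneous degree‑zero element of $L$ lies in some $W_aY^mW_b$ with $a+b=-m$, and that each such product, reduced by means of $X^iY^i=\prod_{l=0}^{i-1}\alpha^{-l}(u)$, $Y^iX^i=\prod_{l=1}^{i}\alpha^l(u)$ and the centrality of $u$, is a multiple of a product of $\alpha$‑translates of $u$ that always contains either the factor $u$ or the factor $\alpha^m(u)$; therefore $L\cap A\subseteq uA+\alpha^m(u)A\subsetneq A$ and $L$ is proper, again a contradiction. (Alternatively, (1) by itself follows at once from Lemma~\ref{GWAiso}: an $\alpha$‑stable ideal $I\subsetneq A$ gives the proper ideal $IW$, since $W/IW\cong W(A/I,\overline\alpha,\overline u)\neq0$.)

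The main obstacle I expect is the bookkeeping in the graded computations for (3) and (4): one must handle both signs of the grading degree and the ``collisions'' of $X$'s past $Y$'s correctly, which is exactly where the identities for $X^iY^i$, $Y^iX^i$ and the hypothesis that $u$ is central do the work. The only external ingredient, the characterization of simplicity of $A[Y^{\pm1};\alpha]$, is imported from \cite[Theorem 1.8.5]{McCR}; the remaining care is to check that, once (3) holds, $\{Y^i\}$ is indeed a right and left Ore set of regular elements, so that $W\hookrightarrow A[Y^{\pm1};\alpha]$ is the localization described in \ref{GWA}.
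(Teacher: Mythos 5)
Your argument is essentially correct, but note first that the paper does not prove Theorem~\ref{Wsimple} at all: it is imported from Bavula \cite{vlad5}, with an earlier commutative version in \cite{primitive} and a more general version in \cite[Theorem 5.4]{ambiskew}. So there is no in-paper proof to match; what you have produced is a self-contained proof whose ingredients are exactly the techniques the paper deploys elsewhere in Section~3. Your descent step (from $Y^n\in L$ to $Y^{n-1}\in L$ via $XY^n=uY^{n-1}$, $Y^nX=\alpha^n(u)Y^{n-1}$ and condition (iv)) is literally Lemma~\ref{YmorXm}; the passage from simplicity of $S=A[Y^{\pm1};\alpha]$ to ``$Y^n\in L$ for some $n$'' is \cite[Lemma 3.1]{ambiskew}, which the paper invokes in Lemma~\ref{yIy} and again in Theorem~\ref{YmorXmthm}, and you would do better to cite it there than to assert that $LS$ is a two-sided ideal of $S$ (true for a two-sided denominator set, by \cite[2.1.16]{McCR}, but worth saying); and your necessity arguments via the graded ideals $W(Yn)W$ and $WY^mW$ are in the same spirit as Proposition~\ref{J} and Lemma~\ref{XnYn}. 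I checked the graded computations: for $a+b=-1$ the products $W_a(Yn)W_b$ do vanish using centrality of $u$ and $nu=0$, and for $a+b=-m$ every degree-zero product $W_aY^mW_b$ lands in $uA+\alpha^m(u)A$ because the relevant product of $\alpha$-translates of $u$ always contains the factor $u$ (when some $X$'s sit to the left) or $\alpha^m(u)$ (when none do). Two small points of care: your appeal to \eqref{skewcomm} should really be to the GWA identities of Definitions~\ref{GWA}, since \eqref{skewcomm} lives in the ambiskew ring $R$ rather than in $W$; and the logical order matters in the forward direction --- you must establish (iii) first by the direct graded argument, since regularity of $u$ is what makes $Y$ regular and hence $W\hookrightarrow S$, after which (i) and (ii) follow from \cite[Theorem 1.8.5]{McCR} exactly as in Remark~\ref{altcon} (alternatively, (i) alone follows from Lemma~\ref{GWAiso}, as you observe). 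With those references tidied up, the proof stands.
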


\begin{cor}\label{zlambdamax}
Let $R$ be a conformal ambiskew polynomial ring of the form $R(A,\alpha,u-\alpha(u),1)$, where $u\in Z(A)$ and the $\K$-algebra $A$ is a domain such that $A[y^{\pm 1},\alpha]$ is simple and $Z(A[y^{\pm 1};\alpha])=\K$. Let $\lambda\in \K$. The ideal $(z-\lambda)R$ is maximal if and only if $(u+\lambda)A+\alpha^m(u+\lambda)A=A$ for all $m\geq 1$.
\end{cor}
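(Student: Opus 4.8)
The plan is to reduce the statement to Theorem~\ref{Wsimple} applied to the generalized Weyl algebra $W(A,\alpha,u+\lambda)$. As recalled in Definitions~\ref{GWA}, since $u$ is central in $A$ and $v=u-\alpha(u)$, the ring $R$ is conformal with Casimir element $z=xy-u$, and $R/(z-\lambda)R\simeq W(A,\alpha,u+\lambda)$ via $x+zR\mapsto X$, $y+zR\mapsto Y$; here $W(A,\alpha,u+\lambda)$ uses the splitting element $u+\lambda$, whose associated $v$ is again $(u+\lambda)-\alpha(u+\lambda)=u-\alpha(u)$, consistent with $R=R(A,\alpha,u-\alpha(u),1)$ and the freedom to replace $u$ by $u+\lambda$, $z$ by $z-\lambda$ noted in Definitions~\ref{defambi}. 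Thus $(z-\lambda)R$ is maximal in $R$ if and only if $W(A,\alpha,u+\lambda)$ is simple.

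Next I would verify that the four conditions of Theorem~\ref{Wsimple} for $W(A,\alpha,u+\lambda)$ hold precisely when $(u+\lambda)A+\alpha^m(u+\lambda)A=A$ for all $m\geq 1$. Conditions (1), (2), (3) hold automatically under the present hypotheses: by Remark~\ref{altcon}, the assumptions that $A[y^{\pm 1};\alpha]$ is simple and $Z(A[y^{\pm 1};\alpha])=\K$ are equivalent to $A$ being $\alpha$-simple, $\alpha^n$ being outer for all $n\geq 1$, and $\{a\in Z(A):\alpha(a)=a\}=\K$; the first two are exactly conditions (1) and (2). Condition (3), that $u+\lambda$ is regular, is immediate because $A$ is a domain and, for this to fail, we would need $u+\lambda=0$, in which case $v=u-\alpha(u)=0$; but then $v^{(m)}=0$ for all $m$ and $\alpha^m(u+\lambda)=u+\lambda=0$, so condition (4) would also fail — so I should handle the degenerate case $u\in\K$ (equivalently $v=0$) separately and observe that there $R$ is a commutative or near-commutative ring for which neither side holds, or simply note $u+\lambda\neq 0$ is forced once (4) holds. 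With (1)--(3) disposed of, Theorem~\ref{Wsimple} says $W(A,\alpha,u+\lambda)$ is simple iff condition (4), namely $(u+\lambda)A+\alpha^m(u+\lambda)A=A$ for all $m\geq 1$, holds — which is exactly the claimed criterion.

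The only real subtlety is bookkeeping around the shift by $\lambda$: one must be careful that the automorphism $\alpha$ and the algebra $A$ are unchanged when passing from $u$ to $u+\lambda$ (they are, since $\lambda\in\K$ is central and $\alpha$-fixed), so that conditions (1), (2), (3) of Theorem~\ref{Wsimple} genuinely transfer verbatim and only condition (4) carries the dependence on $\lambda$. I expect this to be the main point requiring care rather than a genuine obstacle; the rest is a direct citation of Theorem~\ref{Wsimple} together with Remark~\ref{altcon}. A one-line remark reconciling the identification $R/(z-\lambda)R\simeq W(A,\alpha,u+\lambda)$ with the grading argument of Definitions~\ref{GWA} (which already records that $(z-\lambda)R$ is completely prime, hence that its maximality is the same as simplicity of the factor) completes the argument.
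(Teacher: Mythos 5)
Your proposal is correct and follows essentially the same route as the paper: identify $R/(z-\lambda)R$ with $W(A,\alpha,u+\lambda)$, use Remark~\ref{altcon} to dispose of conditions (i) and (ii) of Theorem~\ref{Wsimple}, handle the degenerate case $u+\lambda=0$ by noting both sides of the equivalence fail there (so that otherwise regularity of $u+\lambda$ follows from $A$ being a domain), and read off the criterion from condition (iv). No gaps.
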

\begin{proof} Recall that $R/(z-\lambda)R\simeq W(A,\alpha,u+\lambda)$. From Remark~\ref{altcon}, we know that, as $A[y^{\pm 1},\alpha]$ is simple and $Z(A[y^{\pm 1};\alpha])=\K$, $A$ is $\alpha$-simple and $\alpha^m$ is outer for all $m\geq 1$.
If $u+\lambda=0$ then $W(A,\alpha,u+\lambda)$ is not simple, by Theorem~\ref{Wsimple}, and $(u+\lambda)A+\alpha^m(u+\lambda)A=0\neq A$ so we can assume that $u+\lambda\neq 0$
in the domain $A$.
Thus Conditions (i), (ii) and (iii) in Theorem~\ref{Wsimple} hold for $W(A,\alpha,u+\lambda)$ and $(z-\lambda)R$ is maximal if and only if $(u+\lambda)A+\alpha^m(u+\lambda)A=A$ for all $m\geq 1$.
\end{proof}

\begin{ex}\label{usltwo2}
Let $R$ be as in Example~\ref{usltwo}. Thus $\ch(\K)=0$, $A=\K[t]$, $\alpha(t)=t+2$, $\rho=1$, $u=\frac{-1}{4}(t-1)^2$, $v=t$ and $R$ is the enveloping algebra
$U(sl_2)$. Then every height one prime ideal of $R$ has the form $(z-\lambda)R$ for some $\lambda\in \K$ and $R/(z-\lambda)R=W(\K[t],\alpha,u+\lambda)$. For $m\geq 1$, let $M_{m,\lambda}=(u+\lambda)A+\alpha^m(u+\lambda)A$ which, as $v^{(m)}=u-\alpha^m(u)=u+\lambda-\alpha^m(u+\lambda)$, is equal to $(u+\lambda)A+v^{(m)}A$.
We have seen in \ref{usltwo} that, for $m\geq 1$, $v^{(m)}=m(t+m-1)$ so $v^{(m)}A=(t-(1-m))A$. Also \[u+\lambda\equiv \left(\lambda-\frac{1}{4}m^2\right)\bmod v^{(m)}A.\]
If $\lambda\neq \frac{1}{4}m^2$ for all $m\in \N$ then $M_{m,\lambda}=A$ and, by Corollary~\ref{zlambdamax}, $(z-\lambda)R$ is maximal. On the other hand, if $\lambda=\frac{1}{4}m^2$ for some, necessarily unique, $m\in \N$ then $M_{m,\lambda}=v^{(m)}A=(t+m-1)A$ is maximal and, by Corollary~\ref{zlambdamax},   $(z-\lambda)$ is not maximal.
\end{ex}

\begin{ex}\label{usltwoq2} Let $R$ be the quantum enveloping algebra
$U_q(sl_2)$  as in Example~\ref{usltwoq}. Thus $q\in \K^*$ is not a root of unity, $A=\K[t^{\pm 1}]$,  $\alpha(t)=q^2t$, $\rho=1$, and $u=-(q^{-1}t+qt^{-1})/(q-q^{-1})^2$.
Every height one prime ideal of $R$ has the form $(z-\lambda)R=(xy-(u+\lambda))R$ for some $\lambda\in \K$ and $R/(z-\lambda)R=W(\K[t^{\pm 1}],\alpha,u+\lambda)$, where $\alpha(t)=q^2t$. We have seen in \ref{usltwoq2} that, for $m\geq 1$,
\[
v^{(m)}=\frac{q^{2m-1}-q^{-1}}{(q-q^{-1})^2}(t-q^{2-2m}t^{-1}).
\]
For $m\geq 1$, let \[M_{m,\lambda}=(u+\lambda)A+\alpha^m(u+\lambda)A=(u+\lambda)A+v^{(m)}A.\]
Then  $t^{-1}\equiv q^{2m-2}t\bmod{(v^{(m)}A)}$ from which it follows that $M_{m,\lambda}$ contains the  ideal $(t^2-q^{2-2m})A$ and the maximal ideal
$(t-\mu)A$ where \[\mu=\frac{\lambda(q-q^{-1})^2}{q^{-1}+q^{2m-1}}.\] It now follows that
\[M_{m,\lambda}\neq A\Leftrightarrow M_{m,\lambda}\text{ is maximal }\Leftrightarrow\mu^2=q^{2-2m}\Leftrightarrow \mu=\pm q^{1-m}\Leftrightarrow\lambda=\pm\frac{q^{-m}+q^m}{(q-q^{-1})^2}.\]
By Corollary~\ref{zlambdamax} the ideal $(z-\lambda)R$ is maximal if and only if $\lambda \neq \pm\frac{q^{-m}+q^m}{(q-q^{-1})^2}$ for all $m\in \N$.
\end{ex}

The following lemma determines those values of $\lambda$ for which $R/(z-\lambda)R$ is simple in Example~\ref{quantum torus}.
\begin{lemma}\label{countableex} Suppose that $q$ is not a root of unity.
Let $A$, $u=q^{\frac{p-1}{2}}z_{p}^{-1}+qz_{p}$ and  $\alpha$ be as in Example~\ref{quantum torus} and let $\lambda\in \K$. Let $m\in \N$. Then the ideal $(u+\lambda)A+\alpha^m(u+\lambda)A$ is proper if and only if $\lambda=\pm q^\frac{p-2m+1}{4}(q^m+1)$. If $\lambda=\pm q^\frac{p-2m+1}{4}(q^m+1)$ then
$(u+\lambda)A+\alpha^m(u+\lambda)A$ is a maximal (and completely prime) ideal of $A$ and $(u+\lambda)A+\alpha^a(u+\lambda)A=A$
for all $a\in \N\backslash\{m\}$.
\end{lemma}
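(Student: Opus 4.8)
The plan is to exploit that, since $p$ is odd, $z_p$ commutes with every $z_j$ for $1\le j<p$, so that $A=A'\otimes_{\K}\K[z_p^{\pm 1}]$, where $A'$ is the quantum torus on $z_1^{\pm 1},\dots,z_{p-1}^{\pm 1}$ with the induced relations. A direct check on the parameters $q_{ij}$ shows that the only central monomials of $A'$ are scalars: imposing $z_k z^{\mathbf e}=z^{\mathbf e}z_k$ for even $k=2,4,\dots,p-1$ forces all odd-indexed exponents of $z^{\mathbf e}$ to vanish, and then imposing it for odd $k=p-2,p-4,\dots,1$ forces the even-indexed ones to vanish. Hence $Z(A')=\K$, and by the simplicity criterion for quantum tori \cite[Proposition 1.3]{McCP} (as used in Example~\ref{quantum torus}) the algebra $A'$ is simple; it is also a domain. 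Both $u=q^{(p-1)/2}z_p^{-1}+qz_p$ and, since $\alpha^a(z_p)=q^{-a}z_p$, also $\alpha^a(u)=q^{(p+2a-1)/2}z_p^{-1}+q^{1-a}z_p$ lie in the central subalgebra $\K[z_p^{\pm 1}]$. Therefore $(u+\lambda)A+\alpha^a(u+\lambda)A=A'\otimes_{\K}I_a$, the ideal of $A$ generated by $I_a:=(u+\lambda)\K[z_p^{\pm 1}]+\alpha^a(u+\lambda)\K[z_p^{\pm 1}]$, and $A$ modulo this ideal is $A'\otimes_{\K}\bigl(\K[z_p^{\pm 1}]/I_a\bigr)$. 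Since $A'$ is a simple domain, $(u+\lambda)A+\alpha^a(u+\lambda)A$ is proper, maximal, or completely prime in $A$ precisely when $I_a$ is proper, maximal, or maximal in $\K[z_p^{\pm 1}]$.

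Next I would pass to $\K[z_p]$. Clearing $z_p^{-1}$, write $u+\lambda=z_p^{-1}f$ and $\alpha^a(u+\lambda)=z_p^{-1}g_a$ with $f=qz_p^2+\lambda z_p+q^{(p-1)/2}$ and $g_a=q^{1-a}z_p^2+\lambda z_p+q^{(p+2a-1)/2}$; then $I_a=f\K[z_p^{\pm 1}]+g_a\K[z_p^{\pm 1}]$. As $\K[z_p^{\pm 1}]$ is a principal ideal domain with maximal ideals $(z_p-c)$, $c\in\K^*$, the ideal $I_a$ is proper if and only if $f$ and $g_a$ have a common root in $\K^*$, and it is then a maximal ideal if and only if that common root is unique (they are never proportional, since $q^{2a}\neq1$). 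Because $q$ is not a root of unity, $q^a\neq1$, so $f-g_a$ is a non-zero scalar multiple of $z_p^2-q^{(p+2a-3)/2}$; hence any common root $c$ satisfies $c^2=q^{(p+2a-3)/2}$, and then $f(c)=0$ forces $\lambda c=-q^{(p-1)/2}(q^a+1)$. Combining the two relations gives $\lambda^2=q^{(p-2a+1)/2}(q^a+1)^2$, i.e.\ $\lambda=\pm q^{(p-2a+1)/4}(q^a+1)$; conversely, for such $\lambda$ the relation $\lambda c=-q^{(p-1)/2}(q^a+1)$ determines a unique $c\in\K^*$, which one checks is indeed a common root of $f$ and $g_a$. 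Taking $a=m$ yields the first assertion, and when $\lambda=\pm q^{(p-2m+1)/4}(q^m+1)$ the common root $c$ is unique, so $I_m=(z_p-c)\K[z_p^{\pm 1}]$ is maximal and $(u+\lambda)A+\alpha^m(u+\lambda)A=(z_p-c)A$ is a maximal and completely prime ideal of $A$.

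For the last assertion, fix $\lambda=\pm q^{(p-2m+1)/4}(q^m+1)$ and suppose $a\in\N$ satisfies $(u+\lambda)A+\alpha^a(u+\lambda)A\neq A$. By the previous paragraph, $\lambda^2=q^{(p-2a+1)/2}(q^a+1)^2$; comparing this with $\lambda^2=q^{(p-2m+1)/2}(q^m+1)^2$ and dividing by $q^{(p+1)/2}$ gives $q^a+q^{-a}=q^m+q^{-m}$. Writing $X=q^m$, the equation $X+X^{-1}=q^a+q^{-a}$ has only the solutions $X=q^a$ and $X=q^{-a}$; since $q$ is not a root of unity and $a,m\ge 1$, the case $X=q^{-a}$ cannot occur, and $X=q^a$ forces $a=m$. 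Hence $(u+\lambda)A+\alpha^a(u+\lambda)A=A$ for every $a\in\N\setminus\{m\}$.

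The step I expect to need the most care is the structural reduction $A=A'\otimes_{\K}\K[z_p^{\pm 1}]$ together with the simplicity of $A'$; once that is in place the rest is elementary manipulation in the principal ideal domain $\K[z_p^{\pm 1}]$, the only substantive points being the bookkeeping of powers of $q$ and the fact that $m\mapsto q^m+q^{-m}$ is injective on the positive integers because $q$ is not a root of unity.
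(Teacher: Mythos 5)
Your proof is correct and follows essentially the same route as the paper: both reduce the question to the central Laurent subalgebra $\K[z_p^{\pm 1}]$ by using the simplicity and trivial centre of the quantum torus on $z_1,\dots,z_{p-1}$, then solve for the common zero $\mu$ (your $c$) and for $\lambda$, and finish with the same $q^a+q^{-a}=q^m+q^{-m}$ computation to rule out $a\neq m$. The only cosmetic difference is in packaging: the paper cites \cite[Lemma 9.6.9(i)]{McCR} to identify the maximal ideals of $A$ as the $(z_p-\mu)A$ and factors $u+\lambda$ explicitly in the converse direction, whereas you use the tensor decomposition $A\simeq A'\otimes_\K\K[z_p^{\pm 1}]$ and argue with the quadratics $f,g_a$ in the principal ideal domain $\K[z_p^{\pm 1}]$.
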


\begin{proof}
Suppose that $(u+\lambda)A+\alpha^a(u+\lambda)A$ is proper. Let $B$ be the subalgebra of $A$ generated by $z_1^{\pm1}, z_2^{\pm1},\dots,z_{p-1}^{\pm1}$. As for $S$ in Example~\ref{quantum torus}, it follows from
\cite[Proposition 1.3]{McCP} that $B$ is simple and $Z(B)=\K$.  It then follows from \cite[Lemma 9.6.9(i)]{McCR} that
the maximal ideals of $A$ have the form $(z_{p}-\mu)A$, $\mu\in \K^*$, and are completely prime with factors isomorphic to quantum tori in $p-1$ indeterminates. So there exists $\mu\in \K^*$ such that $u+\lambda\in (z_{p}-\mu)A$ and $\alpha^m(u+\lambda)\in (z_{p}-\mu)A$ and hence such that
\[
q^\frac{p-1}{2}\mu^{-1}+\lambda+q\mu=0=q^mq^\frac{p-1}{2}\mu^{-1}+\lambda+q^{1-m}\mu.
\]
Eliminating the terms that involve $\mu^{-1}$,
\[\lambda(q^m-1)+(q^{m+1}-q^{1-m})\mu=0\]
and, dividing through by $q^m-1$, which is necessarily non-zero, $\lambda=-q^{1-m}(q^m+1)\mu$. Hence $\lambda\neq 0$. Also
\begin{eqnarray*}
0&=&q^\frac{p-1}{2}q^{1-m}(q^m+1)\lambda^{-1}-\lambda+q^m(q^m+1)^{-1}\lambda,\\
0&=&q^\frac{p-1}{2}q^{1-m}(q^m+1)^2-\lambda^2(q^m+1)+q^m \lambda^2,\\
0&=&q^\frac{p-1}{2}q^{1-m}(q^m+1)^2-\lambda^2\text{ and}\\
\lambda&=&\pm q^\frac{p-2m+1}{4}(q^m+1).
\end{eqnarray*}

Conversely,  suppose that $\lambda=\pm q^\frac{p-2m+1}{4}(q^m+1)$. Then
\begin{eqnarray*}
u+\lambda&=&(z_{p}\pm q^\frac{p+2m-3}{4})(q\pm  q^\frac{p-2m+1}{4}z_p^{-1})  \text{ and}\\
\alpha^m(u+\lambda)&=&(z_{p}\pm  q^\frac{p+2m-3}{4})q^{1-m}(1\pm q^\frac{p+6m-3}{4}z_{p}^{-1}).
\end{eqnarray*}
Thus \[(u+\lambda)A+\alpha^m(u+\lambda)A\subseteq (z_{p}\pm q^\frac{p+2m-3}{4})A\neq A.\] Moreover, as $q\pm  q^\frac{p-2m+1}{4}z_{p}^{-1}$ and $1\pm q^\frac{p+6m-3}{4}z_{p}^{-1}$ generate distinct maximal ideals,
\[(u+\lambda)A+\alpha^m(u+\lambda)A=(z_{p}\pm q^\frac{p+2m-3}{4})A,\] which is a maximal  (and completely prime) ideal of $A$.

Finally, if $(u+\lambda)A+\alpha^a(u+\lambda)A\neq A$ then
\[q^\frac{p-2a+1}{4}(q^a+1)=\pm \lambda=\pm q^\frac{p-2m+1}{4}(q^m+1)\]
 from which it follows successively  that
\begin{eqnarray*}
q^{\frac{-a}{2}}(q^a+1)&=&\pm q^{\frac{-m}{2}}(q^m+1),\\
q^{\frac{a}{2}}+q^{\frac{-a}{2}}&=&\pm(q^{\frac{m}{2}}+q^{\frac{-m}{2}}),\\
q^{a}+q^{-a}&=&(q^m+q^{-m})\text{ and}\\
q^a-q^m&=&(q^a-q^m)q^{-a-m}.\end{eqnarray*} As $q$ is not a root of unity, this cannot happen if $a\in \N\backslash\{m\}$.
\end{proof}

\begin{cor}\label{whenmax}
If $R$ is as in Example~\ref{quantum torus},  then $(z-\lambda)R$ is maximal if and only if, for all $m\geq 1$, $\lambda\neq \pm q^\frac{p-2m+1}{4}(q^m+1)$.
\end{cor}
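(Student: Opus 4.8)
This corollary is an immediate consequence of Corollary~\ref{zlambdamax} and Lemma~\ref{countableex}, so the work will be almost entirely bookkeeping. First I would check that the ring $R$ of Example~\ref{quantum torus} satisfies the hypotheses of Corollary~\ref{zlambdamax}: $R$ is the conformal ambiskew polynomial ring $R(A,\alpha,u-\alpha(u),1)$ with $A$ the quantum torus on $z_1^{\pm1},\dots,z_p^{\pm1}$, which is a domain; $u=q^{\frac{p-1}{2}}z_p^{-1}+qz_p$ lies in $Z(A)$; and, as recorded in Example~\ref{quantum torus} via \cite[Proposition 1.3]{McCP}, the skew Laurent ring $S=A[y^{\pm1};\alpha]$ is simple with $Z(S)=\K$. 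Hence all hypotheses of Corollary~\ref{zlambdamax} are in force.

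Next, by Corollary~\ref{zlambdamax}, for a given $\lambda\in\K$ the ideal $(z-\lambda)R$ is maximal if and only if $(u+\lambda)A+\alpha^m(u+\lambda)A=A$ for every $m\geq 1$. By Lemma~\ref{countableex}, for each fixed $m\geq 1$ the ideal $(u+\lambda)A+\alpha^m(u+\lambda)A$ is \emph{proper} precisely when $\lambda=\pm q^{\frac{p-2m+1}{4}}(q^m+1)$; equivalently, it equals $A$ precisely when $\lambda\neq\pm q^{\frac{p-2m+1}{4}}(q^m+1)$. Combining these two statements, $(z-\lambda)R$ is maximal if and only if, for all $m\geq 1$, $\lambda\neq\pm q^{\frac{p-2m+1}{4}}(q^m+1)$, which is the assertion.

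\textbf{Main obstacle.} There is essentially no obstacle remaining at this point: the genuinely substantive computation---that $(u+\lambda)A+\alpha^m(u+\lambda)A$ is proper exactly on the listed countable set of values of $\lambda$, and is then maximal---was carried out in Lemma~\ref{countableex}, using that the maximal ideals of $A$ all have the form $(z_p-\mu)A$ with $\mu\in\K^*$ and the elimination of $\mu$ from the two resulting equations. The only point requiring a word of care is the verification that Corollary~\ref{zlambdamax} genuinely applies, i.e.\ that $S$ is simple with trivial centre (equivalently, by Remark~\ref{altcon}, that $A$ is $\alpha$-simple, that $\alpha^n$ is outer for all $n\geq1$, and that the $\alpha$-fixed central elements of $A$ are just $\K$); this is exactly what Example~\ref{quantum torus} asserts (with reference to \cite[Lemma 3.7]{cdfdaj} for details), so no new argument is needed.
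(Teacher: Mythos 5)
Your proposal is correct and matches the paper's proof, which likewise deduces the result immediately from Corollary~\ref{zlambdamax} and Lemma~\ref{countableex}; you simply spell out the hypothesis-checking and the logical combination in more detail.
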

\begin{proof}
This is immediate from Corollary~\ref{zlambdamax} and Lemma~\ref{countableex}.
\end{proof}

We now aim to establish conditions that, in the context of Corollary~\ref{zlambdamax}, will imply that when $(z-\lambda)R$ is not maximal there is a unique non-zero prime ideal in $R/(z-\lambda)R$.

\begin{lemma}\label{YmorXm}
Let $W=W(A,\alpha,u)$ be a generalized Weyl algebra with $u$ central in $A$. Let $j\geq 1$
 be such that $uA+\alpha^j(u)A=A$. Let $J$ be an ideal of $W$. If $Y^j\in J$ then  $Y^{j-1}\in J$ and if  $X^j\in J$ then $X^{j-1}\in J$. Consequently,
 if  $uA+\alpha^i(u)A=A$ for $1\leq i\leq j$ and $Y^j\in J$ or $X^j\in J$ then $J=W$.
\end{lemma}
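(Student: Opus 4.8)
The plan is to treat the two halves ($Y^j\in J$ and $X^j\in J$) by one ``divisibility'' trick and then deduce the concluding clause by a short downward induction.

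Start with $Y^j\in J$. Using $XY=u$, $YX=\alpha(u)$ and the commutation rule $Y^ka=\alpha^k(a)Y^k$ for $a\in A$, one checks directly that
\[XY^j=uY^{j-1}\qquad\text{and}\qquad Y^jX=\alpha^j(u)Y^{j-1},\]
and both of these lie in $J$ because $J$ is a two-sided ideal of $W$. Since $u$ and $\alpha^j(u)$ are central in $A$, we have $\{au+b\alpha^j(u):a,b\in A\}=Au+A\alpha^j(u)=uA+\alpha^j(u)A$, which by hypothesis is all of $A$; so choose $a,b\in A$ with $au+b\alpha^j(u)=1$. Then, using centrality of $u$ and $\alpha^j(u)$ to rewrite $au\,Y^{j-1}=a(uY^{j-1})$ and $b\alpha^j(u)Y^{j-1}=b(\alpha^j(u)Y^{j-1})$,
\[Y^{j-1}=(au+b\alpha^j(u))Y^{j-1}=a(XY^j)+b(Y^jX)\in J,\]
which is the first half.

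The case $X^j\in J$ is symmetric: from $X^jY=X^{j-1}u$ and $YX^j=\alpha(u)X^{j-1}=X^{j-1}\alpha^j(u)$ (the last step using $aX^{j-1}=X^{j-1}\alpha^{j-1}(a)$) one gets $X^{j-1}u\in J$ and $X^{j-1}\alpha^j(u)\in J$, so for $a,b\in A$ with $ub+\alpha^j(u)a=1$ one has $X^{j-1}=X^{j-1}(ub+\alpha^j(u)a)=(X^{j-1}u)b+(X^{j-1}\alpha^j(u))a\in J$. (Alternatively this follows from the first half via the isomorphism $W(A,\alpha,u)\simeq W(A,\alpha^{-1},\alpha(u))$ interchanging $X$ and $Y$, under which $uA+\alpha^j(u)A=A$ becomes $\alpha(u)A+\alpha^{1-j}(u)A=A$, an equivalent condition after applying $\alpha^{j-1}$.) For the final assertion, if $uA+\alpha^i(u)A=A$ for $1\le i\le j$ and $Y^j\in J$, apply what has just been proved successively with $i=j,j-1,\dots,1$ in place of $j$; then $Y^j,Y^{j-1},\dots,Y^0=1$ all lie in $J$, so $J=W$, and likewise if $X^j\in J$.

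I do not expect a genuine obstacle; the only point needing care is the bookkeeping of powers of $\alpha$ when pushing the central elements $u$ and $\alpha^j(u)$ past $Y^{j-1}$ (resp.\ $X^{j-1}$), since it is exactly this that makes the coprimality hypothesis come out in the form $uA+\alpha^j(u)A=A$ rather than, say, with $\alpha^{1-j}(u)$.
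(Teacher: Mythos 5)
Your proof is correct and follows essentially the same route as the paper: identify $XY^j=uY^{j-1}$ and $Y^jX=\alpha^j(u)Y^{j-1}$ (respectively $X^jY=X^{j-1}u$ and $YX^j=X^{j-1}\alpha^j(u)$), use the comaximality $uA+\alpha^j(u)A=A$ to recover $Y^{j-1}$ (resp.\ $X^{j-1}$), and iterate for the final clause. The only cosmetic difference is that for the $X$-case the paper multiplies by the coefficients on the left, rewriting the hypothesis as $A\alpha^{-(j-1)}(u)+A\alpha(u)=A$, whereas you multiply on the right and use the hypothesis verbatim; by centrality of $u$ these are interchangeable.
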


\begin{proof}
If $Y^j\in J$ then \[uY^{j-1}=XY^j\in J\text{ and }\alpha^j(u)Y^{j-1}=Y^{j-1}\alpha(u)=Y^jX\in J,\] whence \[AY^{j-1}=(Au+A\alpha^j(u))Y^{j-1}\subseteq J\] and so $Y^{j-1}\in J$. Similarly, if $X^j\in J$ then \[\alpha(u)X^{j-1}=YX^j\in J\text{ and }\alpha^{-(j-1)}(u)X^{j-1}=X^{j-1}u=X^jY\in J,\] whence \[AX^{j-1}=(A\alpha^{-(j-1)}(u)+A\alpha(u))X^{j-1}\subseteq J\] and so $X^{j-1}\in J$. Repeating the argument yields the stated consequence.
\end{proof}

\begin{prop}\label{J}
Let $W=W(A,\alpha,u)$ be a generalized Weyl algebra with $u$ central in $A$. Let $m\geq 1$
 be such that $uA+\alpha^j(u)A=A$ for $1\leq j<m$ but $uA+\alpha^m(u)A\neq A$. Let $I$ be an ideal of $A$ containing $uA+\alpha^m(u)A$. There is a
  $\Z$-graded ideal $J=J(I)$ of $W$ such that,
  for $i\geq 0$, $J_i=I_iY^i$ and $J_{-i}=I_{-i}X^i$, where, if $i\geq m$ then $I_i=I_{-i}=A$ and, if $0\leq i\leq m-1$ then
\[I_{i}:=\cap_{\ell=0}^{m-1-i} \alpha^{-\ell}(I)\text{ and }I_{-i}:=\cap_{\ell=i}^{m-1} \alpha^{-\ell}(I).\]
\end{prop}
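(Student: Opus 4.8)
The plan is to verify, by a direct degree-by-degree computation, that the prescribed graded pieces $\{J_i\}_{i\in\Z}$ assemble into a two-sided ideal of $W$. Since $W$ is generated as a ring by $A\cup\{X,Y\}$ and its $\Z$-grading has $W_0=A$, $\deg Y=1$ and $\deg X=-1$, it is enough to show that each $J_i$ is an $A$-sub-bimodule of $W_i$ and that
\[
YJ_i\subseteq J_{i+1},\qquad J_iY\subseteq J_{i+1},\qquad XJ_i\subseteq J_{i-1},\qquad J_iX\subseteq J_{i-1}
\]
for every $i\in\Z$. The bimodule assertion is immediate: every $I_i$ and every $I_{-i}$ is a two-sided ideal of $A$ (a finite intersection of translates of $I$, or $A$), and $W_i$ is free of rank one both as a left and as a right $A$-module, on $Y^i$ respectively $X^{|i|}$, so $AJ_i=J_i=J_iA$ via $Y^ia=\alpha^i(a)Y^i$ and $X^ia=\alpha^{-i}(a)X^i$.

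For the multiplicative inclusions I would first record two elementary ingredients. Writing $I_{[a,b]}:=\bigcap_{\ell=a}^{b}\alpha^{-\ell}(I)$, with $I_{[a,b]}=A$ for $a>b$, the definitions read $I_i=I_{[0,\,m-1-i]}$ for $i\ge0$ and $I_{-i}=I_{[i,\,m-1]}$ for $i\ge0$, and one has $\alpha(I_{[a,b]})=I_{[a-1,b-1]}$, $\alpha^{-1}(I_{[a,b]})=I_{[a+1,b+1]}$ and $I_{[a,b]}\subseteq I_{[a',b']}$ whenever $[a',b']\subseteq[a,b]$. Secondly, $uA+\alpha^m(u)A\subseteq I$ gives $u\in I$ and $\alpha^m(u)\in I$, with $u$ (hence each $\alpha^k(u)$) central in $A$, so that
\[
K\,\alpha^{k}(u)\ \subseteq\ \alpha^{k}(I)\cap\alpha^{k-m}(I)\qquad\text{for every two-sided ideal }K\text{ of }A\text{ and every }k\in\Z .
\]
Moving the relevant generator past the coefficient and across the pivot by means of $Y^{j}a=\alpha^{j}(a)Y^{j}$, $X^{j}a=\alpha^{-j}(a)X^{j}$, $XY=u$, $YX=\alpha(u)$ and the consequent identities $X^{j}Y=\alpha^{1-j}(u)X^{j-1}$, $Y^{j}X=\alpha^{j}(u)Y^{j-1}$, each of the four inclusions then reduces to a containment $K\subseteq I_{n'}$ in which $K$ is either $K_0$ or $K_0\,\alpha^{k}(u)$ with $K_0\in\{I_n,\alpha(I_n),\alpha^{-1}(I_n)\}$, and in which $I_{n'}$ is either $K_0$ (up to interval monotonicity) or $K_0$ intersected with one further translate $\alpha^{-j}(I)$ of $I$. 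Landing inside the $K_0$-part is formal ($K_0\alpha^k(u)\subseteq K_0$ since $\alpha^k(u)\in A$), and landing inside the extra translate $\alpha^{-j}(I)$, when there is one, is precisely the displayed inclusion, because the exponent $k$ produced by $XY=u$ or $YX=\alpha(u)$ always satisfies $-j\in\{k,\,k-m\}$.

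To halve the casework I would sort the inclusions by the sign of $i$ and use the isomorphism $W(A,\alpha,u)\to W(A,\alpha^{-1},\alpha(u))$ that fixes $A$ and interchanges $X$ and $Y$; it sends the degree-$n$ component to the degree-$(-n)$ one, the defining integer $m$ is unchanged for the transformed data, and the isomorphism carries our candidate $J$ for $(A,\alpha,u,I)$ onto the analogous candidate for $(A,\alpha^{-1},\alpha(u),\alpha^{-(m-1)}(I))$, with $\alpha^{-(m-1)}(I)$ still containing $\alpha(u)A+\alpha^{1-m}(u)A$. Hence it suffices to verify the four inclusions for $i\ge0$, the negative-degree ones then following on applying the result to the transformed data.

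The step I expect to be the real obstacle is not conceptual but organisational: keeping the index book-keeping correct across all the cases. The only input beyond interval arithmetic is the pair of memberships $u\in I$ and $\alpha^m(u)\in I$, and these are used precisely in the ``inward'' inclusions, namely those carrying an $\alpha^{k}(u)$-factor --- for instance $XJ_1\subseteq J_0$ (using $u\in I$) or $YJ_{-1}\subseteq J_0$ (using $\alpha^m(u)\in I$) --- where a bare inclusion of intersections fails and the $u$-factor thrown off by $XY=u$ or $YX=\alpha(u)$ is exactly what produces the missing translate of $I$; everywhere else the inclusions are formal. (Note that the hypothesis $uA+\alpha^{j}(u)A=A$ for $1\le j<m$ plays no part in constructing $J$; it is used only in the subsequent analysis of the factor ring $W/J$.)
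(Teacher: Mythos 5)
Your proposal is correct and follows essentially the same route as the paper: a direct degree-by-degree verification that $AJ_i,\,J_iA\subseteq J_i$, that $YJ_i,\,J_iY\subseteq J_{i+1}$ and that $XJ_i,\,J_iX\subseteq J_{i-1}$, with the memberships $u\in I$ and $\alpha^m(u)\in I$ supplying exactly the one extra translate of $I$ needed in the ``inward'' inclusions (e.g.\ $J_iX=I_i\alpha^i(u)Y^{i-1}\subseteq I_i\cap\alpha^{i-m}(I)\,Y^{i-1}=J_{i-1}$). The only cosmetic difference is that you dispose of the negative-degree cases via the grading-reversing isomorphism $W(A,\alpha,u)\simeq W(A,\alpha^{-1},\alpha(u))$, where the paper simply writes ``similarly''.
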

\begin{proof} Note that the two definitions of $I_0$ coincide.
With $J_i$ as above for $i\in \Z$, let $J=\oplus_{i\in \Z} J_i$. It is clear that $J_iA\subseteq J_i$ and $AJ_i\subseteq J_i$ for each $i\in \Z$.
Let $i\geq 0$. Clearly $J_iY\subseteq J_{i+1}$ and $J_{-i}X\subseteq J_{-(i+1)}$.
Also,
\[YJ_i\subseteq \alpha(I_i)Y^{i+1}\subseteq I_{i+1}Y^{i+1}=J_{i +1}\]
and, similarly, $XJ_{-i}\subseteq J_{-(i+1)}$.
Now let $i\geq 1$. As $u\in \alpha^{-m}(I)$ and $u\in I$,
\[
J_iX=I_iY^{i-1}\alpha(u)=I_i\alpha^i(u)Y^{i-1}\subseteq I_i\alpha^{i-m}(I)Y^{i-1}\subseteq I_{i-1}Y^{i-1}=J_{i-1}\] and
\[
XJ_i=\alpha^{-1}(I_i)XY^i=\alpha^{-1}(I_i)uY^{i-1}\subseteq \alpha^{-1}(I_i)IY^{i-1}\subseteq  I_{i-1}Y^{i-1}=J_{i-1}.\]
Similarly,
 $J_{-i}Y\subseteq J_{-(i-1)}$ and $YJ_{-i}\subseteq J_{-(i-1)}$. It follows that $J$ is a graded ideal of $W$.
\end{proof}

\begin{notn}\label{de}
For $i\geq 1$, let $d_i=\alpha(u)\alpha^2(u)\dots \alpha^i(u)$ and $e_i=\alpha^{-i}(d_i)=u\alpha^{-1}(u)\dots \alpha^{1-i}(u)$. Thus
$d_i=Y^iX^i$ and $e_i=X^iY^i$, see Definitions~\ref{GWA}.
\end{notn}

\begin{lemma}\label{YXAI}
Let $W$ and $m$ be as in Proposition~\ref{J}. For $0<i<m$,
$d_iA+uA=A=d_iA+\alpha^m(u)A$ and $e_iA+\alpha^{-i}(u)A=A=e_iA+\alpha^{m-i}(u)A$.
\end{lemma}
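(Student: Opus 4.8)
The plan is to prove Lemma~\ref{YXAI} purely inside the commutative ring $A$, using that $\alpha$ is an automorphism and the hypothesis on $u$ from Proposition~\ref{J}, namely that $uA+\alpha^j(u)A=A$ for $1\le j<m$. The starting observation is that for a commutative ring, if $aA+bA=A$ and $aA+cA=A$ then $aA+bcA=A$ (comaximality is preserved under products). Since $d_i=\alpha(u)\alpha^2(u)\cdots\alpha^i(u)$ for $0<i<m$, it suffices to show that $uA+\alpha^k(u)A=A$ for each $k$ with $1\le k\le i$, and then multiply: this gives $uA+d_iA=A$ immediately from the hypothesis, as each $k$ satisfies $1\le k<m$.

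Next I would handle $d_iA+\alpha^m(u)A=A$. Apply the automorphism $\alpha^k$ to the identity $uA+\alpha^{m-k}(u)A=A$, which holds whenever $1\le m-k<m$, i.e. $1\le k\le m-1$; this yields $\alpha^k(u)A+\alpha^m(u)A=A$. Taking $k=1,\dots,i$ (all in the allowed range since $i<m$) and multiplying these comaximality relations gives $\bigl(\prod_{k=1}^i\alpha^k(u)\bigr)A+\alpha^m(u)A=A$, that is, $d_iA+\alpha^m(u)A=A$. The statements about $e_i=\alpha^{-i}(d_i)$ follow by applying $\alpha^{-i}$ to the two identities just obtained for $d_i$: from $d_iA+uA=A$ we get $\alpha^{-i}(d_i)A+\alpha^{-i}(u)A=A$, i.e. $e_iA+\alpha^{-i}(u)A=A$; and from $d_iA+\alpha^m(u)A=A$ we get $e_iA+\alpha^{m-i}(u)A=A$.

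I expect no real obstacle here; the only thing to be careful about is bookkeeping on the ranges of exponents, making sure every invocation of the hypothesis $uA+\alpha^j(u)A=A$ uses an exponent $j$ strictly between $0$ and $m$, which is exactly what $0<i<m$ guarantees. One clean way to present it is to isolate the elementary fact that in a commutative ring comaximal ideals multiply to a comaximal ideal, and that applying a ring automorphism preserves comaximality, and then the four displayed identities all drop out in a line or two each. I would write it up in roughly that order: first the reduction to pairwise comaximality of $u$ with the relevant powers $\alpha^k(u)$, then the products, then the $e_i$ statements via $\alpha^{-i}$.
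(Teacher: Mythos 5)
Your argument is essentially correct and takes a genuinely different (and more elementary) route than the paper. The paper argues by contradiction: if $d_iA+uA\neq A$, it takes a maximal ideal $M$ containing $d_iA+uA$; since $M$ is prime and the factors $\alpha^j(u)$ of $d_i$ are central, some $\alpha^j(u)$ with $1\leq j\leq i<m$ lies in $M$ together with $u$, contradicting $uA+\alpha^j(u)A=A$. The remaining identities are obtained ``similarly'' and by applying $\alpha^{-i}$, exactly as you do. Your direct route --- multiplying pairwise comaximal ideals and transporting comaximality by the automorphism --- avoids the appeal to maximal ideals entirely and makes the exponent bookkeeping explicit, which is a small gain in transparency; the paper's version is shorter because it delegates the combinatorics to primeness of $M$.

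One point you must fix before writing this up: $A$ is \emph{not} commutative in this paper (in the motivating Example~\ref{quantum torus} it is a quantum torus), so your framing ``purely inside the commutative ring $A$'' is wrong as stated. Fortunately your key fact survives: if $a$, $b$, $c$ are \emph{central} elements with $aA+bA=A$ and $aA+cA=A$, then expanding $1=(a r+b s)(a r'+c s')$ and using centrality to push $a$ (respectively $b$, $c$) to the front of each term gives $1\in aA+bcA$. Since $u$ is central by hypothesis and $\alpha$ is an automorphism, every $\alpha^k(u)$ is central, so all the ideals you manipulate are two-sided and generated by central elements, and your products $d_i$ and $e_i$ are products of central elements. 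Replace ``$A$ commutative'' by ``the elements $u,\alpha^k(u)$ are central'' and the proof is complete.
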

\begin{proof}
Suppose that $d_iA+uA\neq A$ and let $M$ be a maximal ideal of $A$ containing $d_iA+uA$. As $u$ is central, there exists $j$ such that $1\leq j\leq i<m$,
 $\alpha^j(u)\in M$ and $u\in M$. This contradicts the conditions of Proposition~\ref{J} so $d_iA+uA=A$. Similarly $d_iA+\alpha^m(u)A=A$ and, applying $\alpha^{-i}$, $e_iA+\alpha^{-i}(u)A=A=e_iA+\alpha^{m-i}(u)A$.
\end{proof}

\begin{lemma}\label{IJmax}
Let $W$, $I$ and $J=J(I)$ be as in Proposition~\ref{J} and suppose that $I$ is a maximal ideal of $A$. Then $J$ is a maximal ideal of $W$.
\end{lemma}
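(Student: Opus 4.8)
The plan is to show that $W/J$ is simple by exhibiting it as a generalized Weyl algebra over the simple ring $A/I = \K$ (since $I$ is maximal and $\K$ is algebraically closed) and then checking the simplicity criteria of Theorem~\ref{Wsimple}, or, more directly, to argue that any ideal $J'$ strictly containing $J$ must be all of $W$ by a $\Z$-graded analysis. I will pursue the graded approach, since $J$ is already $\Z$-graded and this keeps the bookkeeping closest to Proposition~\ref{J}. So let $J'$ be an ideal of $W$ with $J \subsetneq J'$; I want to conclude $J' = W$. The first step is to reduce to the graded situation: although an arbitrary $J'$ need not be graded, it suffices to find, for each nonzero homogeneous-looking obstruction, a homogeneous element of $J'$ not in $J$. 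Concretely, I will take a nonzero element $w \in J' \setminus J$ and, using the $\Z$-grading $W = \oplus_i W_i$, pass to a homogeneous component: because $J = \oplus J_i$ is graded, $w \notin J$ forces some homogeneous component $w_i = a_i Y^i$ (or $a_i X^{-i}$) of $w$ to lie outside $J_i$, i.e. $a_i \notin I_i$. The technical point to nail down is that I can actually get such a homogeneous component \emph{into} $J'$; for this I would either invoke that $\{Y^k\}$ and $\{X^k\}$ are Ore sets and localize, working in $A[Y^{\pm1};\alpha]$ where the grading is inner (conjugation by $Y$), and then clear denominators back into $W$, or simply multiply $w$ on left and right by suitable powers of $X$ and $Y$ and use the relations $X^kY^k = e_k$, $Y^kX^k = d_k$ together with Lemma~\ref{YXAI} to separate components. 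I expect this reduction to be the routine part.

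The heart of the argument is then: if $a Y^i \in J'$ with $a \notin I_i$ for some $i$ with $0 \le i \le m-1$ (the cases $i \ge m$ and the negative-degree cases being either trivial, since $J_i = AY^i$ there, or symmetric via $X \leftrightarrow Y$), then $J' = W$. Since $I$ is maximal and $I_i = \cap_{\ell=0}^{m-1-i}\alpha^{-\ell}(I)$ is a finite intersection of the distinct maximal ideals $\alpha^{-\ell}(I)$, the ring $A/I_i$ is semisimple (a product of fields, by CRT), so $a \notin I_i$ means $a$ is nonzero in at least one factor $A/\alpha^{-\ell_0}(I)$. I would use this, together with the $A$-bimodule structure $A\,(aY^i)\,A = A\,a\,\alpha^i(A)\,Y^i$, to enlarge $J' \cap W_i$ to contain $J_i + \alpha^{-\ell_0}(I)'\!Y^i$ for a \emph{smaller} intersection, i.e. one omitting the factor $\alpha^{-\ell_0}(I)$; iterating, I can assume $J' \cap W_i \supseteq b Y^i$ with $b \notin \alpha^{-\ell}(I)$ for \emph{every} $\ell$ in the relevant range, and in particular (pushing to the extreme case) I can arrange $Y^m \in J'$ or $X^m \in J'$, or more precisely reduce to the situation $Y^{m-1} \in J'$ after using $\alpha^j(u) A + uA = A$ for $1 \le j < m$. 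Here is where Lemma~\ref{YmorXm} does the final work: the hypothesis of Proposition~\ref{J} is exactly that $uA + \alpha^j(u)A = A$ for $1 \le j < m$, so once $Y^{m}$ (equivalently $Y^{m-1}$, etc.) lands in $J'$, Lemma~\ref{YmorXm} cascades down to give $Y^0 = 1 \in J'$, hence $J' = W$.

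The step I expect to be the main obstacle is the middle one: cleanly showing that an element $aY^i \in J'$ with $a \notin I_i$ can be "upgraded," via the ideal operations in $W$ and the coprimality facts in Lemma~\ref{YXAI}, all the way to a unit power $Y^k$ or $X^k$ in $J'$. The subtlety is that multiplying $aY^i$ by $X$ produces $a\alpha^i(u)Y^{i-1}$ or $\alpha^{-1}(a)uY^{i-1}$, so the ideal generated drifts in a way governed by how $u$ and $\alpha^i(u)$ sit relative to $I$; controlling this requires the precise coprimality $d_iA + uA = A = d_iA + \alpha^m(u)A$ (and the $e_i$ analogue) from Lemma~\ref{YXAI}, applied maximal-ideal by maximal-ideal. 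Once that drift is shown to be "harmless" — i.e. it never forces the component to fall back into $I_i$ without simultaneously giving a strictly larger homogeneous piece — the rest is the bookkeeping above. Assembling: from $J' \supsetneq J$ we extract a homogeneous $aY^i$ (or $aX^i$) with $a\notin I_i$, upgrade it using $W$'s relations and Lemma~\ref{YXAI} to some $Y^k \in J'$ with $k \le m$, then apply Lemma~\ref{YmorXm} (whose hypothesis is met by the standing assumption on $uA+\alpha^j(u)A$) to get $1 \in J'$; therefore $J$ is maximal.
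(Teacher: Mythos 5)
Your overall plan---a $\Z$-graded analysis that ends by feeding a power of $X$ or $Y$ into Lemma~\ref{YmorXm}, with Lemma~\ref{YXAI} supplying the coprimality---is the right skeleton, and it matches the paper's. But the two steps you defer ("the routine part" and "the main obstacle") are precisely where the proof lives, and neither is actually carried out. First, you cannot in general place a homogeneous component of $w\in J'\setminus J$ into $J'$: the $\Z$-grading of $W$ is \emph{not} inner via conjugation by $Y$ (conjugating by $Y$ acts as $\alpha$ on coefficients and preserves degree; it does not scale the degree-$i$ component by a character of $i$), so the localize-and-clear-denominators route does not separate components, and an arbitrary ideal $J'$ need not be graded. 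The paper never performs this separation: it keeps the whole element $g\in M\setminus J$, multiplies by $Y^{\ell}$ on the left and $Y^{m-1-i-\ell}$ on the right (choosing $\ell$ with $\alpha^{\ell}(a_i)\notin I$---note a one-sided shift lands you in the wrong intersection) and uses $Y^m\in J\subseteq M$ to absorb everything pushed to degree $\geq m$, thereby reducing to $a_{m-1}\notin I$. Second, and more seriously, you jump from "the degree-$(m-1)$ coefficient can be made a unit" to "$Y^{m-1}\in J'$" (your intermediate target $Y^m\in J'$ is vacuous, since $Y^m\in J$ already). The missing device is the sandwich: after using maximality of $I$ on the ideal of degree-$(m-1)$ coefficients to normalise $g=w_{1-m}+\dots+w_{m-2}+Y^{m-1}\in M$, one forms $X^{m-1}gX^{m-1}$; every term except $X^{m-1}Y^{m-1}X^{m-1}=e_{m-1}X^{m-1}$ has degree $\leq -m$ and so lies in $AX^{m}W\subseteq M$, whence $e_{m-1}X^{m-1}\in M$. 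Combining this with $\alpha^{-(m-1)}(u)X^{m-1}\in J_{-(m-1)}\subseteq M$ and Lemma~\ref{YXAI} gives $X^{m-1}\in M$, and Lemma~\ref{YmorXm} finishes. Without this step (or an equivalent mechanism controlling the lower-degree "drift" you worry about), the argument does not close.

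Two smaller corrections: $A$ is not assumed commutative (in the motivating Example~\ref{quantum torus} it is a quantum torus), so $A/I$ is a simple ring, not $\K$, and $A/I_i$ is a product of simple rings rather than of fields; your CRT bookkeeping survives this but the stated justification does not. Also, the iterative "omit one maximal factor at a time" enlargement you sketch is not needed once the sandwich trick is available---the maximality of $I$ enters only once, to show the coefficient ideal $F$ equals $A$.
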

\begin{proof} Recall that $X^m\in J$ and $Y^m\in J$.
Let $M$ be an ideal of $W$ such that $J\subset M$. There exist $a_{-(m-1)},\dots,a_0,\dots,a_{m-1}\in A$ such that
\[g:=a_{-(m-1)}X^{m-1}+\dots+a_0+\dots+a_{m-1}Y^{m-1}\in M\] and $a_i\notin I_i$ for at least one $i$ with $m-1\geq i\geq -(m-1)$.
Suppose that $a_i\notin I_i$ for some  $i$ with $0\leq i\leq m-1$. Then there exists $\ell$ such that $0\leq \ell\leq i\leq m-1$ and
$\alpha^\ell(a_i)\notin I$. But $Y^\ell gY^{m-1-i-\ell}\in M$ and its coefficient of $Y^{m-1}$ is $\alpha^\ell(a_{i})$. Replacing $g$ by $Y^\ell gY^{m-1-i-\ell}$ and recalling that $Y^m\in J\subseteq M$, we can assume that $i=m-1$. Thus $a_{m-1}\notin I_{m-1}=I$. Let $F$ denote the set of all elements $f\in A$ for which there exist $b_{-(m-1)},\dots,b_{m-2}\in A$ such that
 \[b_{-(m-1)}X^{m-1}+\dots+b_0+\dots+b_{m-2}Y^{m-2}+fY^{m-1}\in M.\] Then $F$
is an ideal of $A$, $a_{m-1}\in F\backslash I$ and $I\subseteq F$ so $F\neq I$ and, by the maximality of $I$, $F=A$. Hence we may assume that $a_{m-1}=1$ and that
\[g=w_{1-m}+\dots+w_0+\dots+w_{m-2}+Y^{m-1}\in M\] where, for $-(m-1)\leq i\leq m-2$, $w_i$ is homogeneous of degree $i$.
Consider \[X^{m-1}gY^{m-1}=X^{m-1}w_{1-m}Y^{m-1}+\dots+X^{m-1}w_0Y^{m-1}+\dots+X^{m-1}Y^{m-1}Y^{m-1}\in M.\] The term $X^{m-1}Y^{m-1}X^{m-1}$ is homogeneous of degree $1-m$ and the other terms have degree $\leq -m$. As $W_{-k}=AX^k$ for $k\geq m$ and $X^m\in J\subset M$, the other terms  are in $M$.
Hence $X^{m-1}Y^iX^{m-1}\in M$, that is, $e_{m-1}X^{m-1}\in M$,  where,  as in Notation~\ref{de}, $e_{m-1}=u\alpha^{-1}(u)\dots\alpha^{-(m-2)}(u)$.
As $I_{1-m}=\alpha^{-(m-1)}(u)A$, $\alpha^{-(m-1)}(u)X^{m-1}\in M$. Hence $(\alpha^{-(m-1)}(u)A+e_{m-1}A)X^{m-1}\subset M$ and it follows from Lemma~\ref{YXAI} that
$X^{m-1}\in M$.
By Lemma~\ref{YmorXm}, $M=W$.

The argument if $a_i\notin I_i$ for some  $i$ with $0> i\geq 1-m$ is similar. We may assume that $i=1-m$ and $a_{-(1-m)}=1$ and consider $Y^{m-1}gY^{m-1}$, which belongs to $M$, giving that
 $Y^{m-1}X^{m-1}Y^{m-1}=d_{m-1}Y^{m-1}\in M$, which leads us to conclude, using Lemma~\ref{YXAI} and the fact that $uX^{m-1}\in J\subseteq M$, that  $M=W$.
 This completes the proof that $J$ is maximal.
\end{proof}

\begin{lemma}\label{XnYn}
Let $W$ be as in Proposition~\ref{J}, let $I=uA+\alpha^m(u)A$ and let $J=J(I)$ be as in Proposition~\ref{J}. Any prime ideal $P$ of $W$ containing $X^m$ and $Y^m$ must contain $J$.
\end{lemma}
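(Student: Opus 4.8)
The plan is to sidestep $P$ itself in favour of the two‑sided ideal $Q:=WX^mW+WY^mW$ generated by $X^m$ and $Y^m$; since $P$ is an ideal containing $X^m$ and $Y^m$ we have $Q\subseteq P$, so it suffices to prove the stronger assertion $J\subseteq Q$ (the primeness of $P$ will not actually be used). As $X^m\in W_{-m}$ and $Y^m\in W_m$ are homogeneous, $Q$ is $\Z$‑graded, $Q=\bigoplus_{i\in\Z}Q_i$ with $Q_i=Q\cap W_i$; and since $Y^i=Y^{i-m}Y^m\in Q$ and $X^i=X^{i-m}X^m\in Q$ for $i\geq m$, together with $W_i=AY^i$ and $W_{-i}=AX^i$, one gets $Q_i=W_i$ for $|i|\geq m$. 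Thus everything reduces to comparing $Q_i$ with the piece $J_i$ of $J$ (namely $I_iY^i$, resp.\ $I_{-i}X^i$) described in Proposition~\ref{J} in the range $|i|\leq m-1$, and this is done by a downward induction on $|i|$.

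For the base step, from $Q_m=AY^m$ and the inclusions $XQ_m\subseteq Q_{m-1}$ and $Q_mX\subseteq Q_{m-1}$ (valid since $Q$ is an ideal and $XW_m,\,W_mX\subseteq W_{m-1}$), using $XY=u$, $YX=\alpha(u)$ and the centrality of $u$, one obtains $uAY^{m-1}\subseteq Q_{m-1}$ and $\alpha^m(u)AY^{m-1}\subseteq Q_{m-1}$, hence $Q_{m-1}\supseteq(uA+\alpha^m(u)A)Y^{m-1}=IY^{m-1}=J_{m-1}$; symmetrically $Q_{-(m-1)}\supseteq\alpha^{-(m-1)}(I)X^{m-1}=J_{-(m-1)}$. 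For the inductive step, assume $Q_{i+1}\supseteq I_{i+1}Y^{i+1}$ for some $0\leq i\leq m-2$. Since $X\,I_{i+1}Y^{i+1}\subseteq XQ_{i+1}\subseteq Q_i$ and $I_{i+1}Y^{i+1}\,X\subseteq Q_{i+1}X\subseteq Q_i$, and $XY^{i+1}=uY^i$, $Y^{i+1}X=\alpha^{i+1}(u)Y^i$, the centrality of $u$ yields
\[
Q_i\ \supseteq\ \bigl(u\,\alpha^{-1}(I_{i+1})+\alpha^{i+1}(u)\,I_{i+1}\bigr)Y^i .
\]
So it remains to see that the coefficient ideal contains $I_i$, i.e.\ to prove the purely ring‑theoretic claim that, writing $I_k=\bigcap_{\ell=0}^{m-1-k}\alpha^{-\ell}(I)$ and hence $\alpha^{-1}(I_{i+1})=\bigcap_{\ell=1}^{m-1-i}\alpha^{-\ell}(I)$ while $I_{i+1}=\bigcap_{\ell=0}^{m-2-i}\alpha^{-\ell}(I)$, one has $I_i\subseteq u\,\alpha^{-1}(I_{i+1})+\alpha^{i+1}(u)\,I_{i+1}$, with the evident analogue for the negative pieces.

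This claim is the one delicate point, and it is exactly where the hypothesis $uA+\alpha^j(u)A=A$ for $1\leq j<m$ enters. Because $u$ is central and the exponents of $u$ and $\alpha^{i+1}(u)$ differ by $i+1\in\{1,\dots,m-1\}$, these elements are comaximal, so $1=ug+\alpha^{i+1}(u)h$ for some $g,h\in A$. Then for $a\in I_i$ we write $a=u(ag)+\alpha^{i+1}(u)(ah)$, and since $I_i$ is an intersection over a set of indices containing those defining $\alpha^{-1}(I_{i+1})=\bigcap_{\ell=1}^{m-1-i}\alpha^{-\ell}(I)$ (drop $\ell=0$) and $I_{i+1}=\bigcap_{\ell=0}^{m-2-i}\alpha^{-\ell}(I)$ (drop $\ell=m-1-i$), and these are ideals of $A$, we get $ag\in\alpha^{-1}(I_{i+1})$ and $ah\in I_{i+1}$. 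This proves the claim. Feeding it back, the induction gives $Q_i\supseteq J_i$ for $0\leq i\leq m-1$; the symmetric argument, starting from $Q_{-m}=W_{-m}$ and using $YQ_{-(i+1)}\subseteq Q_{-i}\supseteq Q_{-(i+1)}Y$, gives $Q_{-i}\supseteq J_{-i}$; and assembling the graded pieces yields $J\subseteq Q\subseteq P$.
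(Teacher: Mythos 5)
Your proof is correct, and it takes a genuinely different route from the paper's. The paper works with an arbitrary ideal $K\supseteq X^mW+Y^mW$ and first establishes only the weaker containment $d_{m-1}J\subseteq K$, where $d_{m-1}=Y^{m-1}X^{m-1}=\alpha(u)\cdots\alpha^{m-1}(u)$; it then invokes primeness of $K$ to split off $d_{m-1}$, and uses the comaximality statements of Lemma~\ref{YXAI} together with Lemma~\ref{YmorXm} to rule out $d_{m-1}\in K$. You instead prove the formally stronger assertion that $J$ lies in the two-sided ideal $Q$ generated by $X^m$ and $Y^m$, by a downward induction on the degree of the graded pieces, with the comaximality of $u$ and $\alpha^{i+1}(u)$ (for $1\leq i+1\leq m-1$) entering through the identity $a=u(ag)+\alpha^{i+1}(u)(ah)$ and the inclusions $I_i\subseteq \alpha^{-1}(I_{i+1})$, $I_i\subseteq I_{i+1}$; I checked the base step $Q_{m-1}\supseteq(uA+\alpha^m(u)A)Y^{m-1}=I_{m-1}Y^{m-1}$, the inductive step, and the negative-degree analogue, and all are sound (note that your argument does use that $I$ is exactly $uA+\alpha^m(u)A$, which is the case in the lemma, rather than an arbitrary ideal containing it as in Proposition~\ref{J}). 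What each approach buys: yours is self-contained, dispenses with primeness entirely, and yields the sharper conclusion $J\subseteq Q$; the paper's is shorter on the page because Lemmas~\ref{YXAI} and \ref{YmorXm} are already in place for use elsewhere (e.g.\ in Lemma~\ref{IJmax} and Theorem~\ref{YmorXmthm}).
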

\begin{proof}
For $i\geq 0$, let $d_i=Y^iX^i=\alpha(u)\alpha^2(u)\dots\alpha^i(u)$.
Let $K$ be an ideal of $W$ that contains $X^m$ and $Y^m$.

We claim that  $d_{m-1}J\subseteq K$.
For this it suffices to show that $d_{m-1}J_i\subseteq K$ for all $i\in \Z$. As $Y^m\in K$ and  $X^m\in K$, $J_{i}\subseteq K$ and $J_{-i}\subseteq K$ for $i\geq m$. Let $1\leq i<m$.
Then $J_{-i}\subseteq X^iA$ so\[d_{m-1}J_{-i}\subseteq d_{m-1}X^{i}A=Y^{m-1}X^{m-1}X^{i}A=Y^{m-1}X^mX^{i-1}A\subseteq K.\]

Also, for $0\leq i\leq m$, $J_i\subseteq IY^i$ so
\[d_{m-1}J_i\subseteq d_{m-1}IY^i=ud_{m-1}Y^iA+\alpha^m(u)d_{m-1}Y^iA=ud_{m-1}Y^iA+d_mY^iA.\]
Here $d_m=Y^mX^m\in K$ and $ud_{m-1}=XYY^{m-1}X^{m-1}=XY^mX^{m-1}\in K$ so $d_{m-1}J_i\subseteq K$. This completes the proof of the
claim that  $d_{m-1}J\subseteq K$.

Now suppose that $K$ is prime and that
$J\not\subseteq K$. Then, as $J$ is an ideal and  $d_{m-1}J\subseteq K$,  $d_{m-1}\in K$.  Note that $X^{m-1}u=X^{m-1}XY=X^mY\in K$ so
 $X^{m-1}(uA+d_{m-1}A)\subseteq K$.
It follows, by Lemma~\ref{YXAI}, that $X^{m-1}\in K$.  By Lemma~\ref{YmorXm},
$K=W$. This is a contradiction so $J\subseteq K$.
\end{proof}

\begin{theorem}\label{YmorXmthm}
Let $W(A,\alpha,u)$ be a generalized Weyl algebra, with $u$ central and regular in $A$, such that, for some fixed $m \in \mathbb{N}$:
\begin{enumerate}
\item $Au+A\alpha^i(u)=A$ for all $i\in \N\backslash\{m\}$;
\item $M:=Au+A\alpha^m(u)$ is a maximal ideal in $A$.
\end{enumerate}
Then the ideal $J(M)$ is a maximal ideal of $W$ containing both $X^m$ and $Y^m$ and is the unique prime ideal $P$ in $W$ for which there exists $r \in \mathbb{N}$ such that $X^r\in P$ and $Y^r\in P$. Moreover if $A$ is $\alpha$-simple and no power of $\alpha$ is inner then $J(M)$ is the unique non-zero
prime ideal in $W$.
\end{theorem}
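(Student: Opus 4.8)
The plan is to dispatch the three assertions in turn, leaning on the lemmas already established. For the first, I would simply invoke Proposition~\ref{J}: hypothesis (i) is precisely the requirement $Au+A\alpha^{j}(u)=A$ for $1\le j<m$, and hypothesis (ii) supplies $Au+A\alpha^{m}(u)=M\ne A$, so Proposition~\ref{J} produces the graded ideal $J(M)$, and since $I_{m}=I_{-m}=A$ one reads off $Y^{m}\in J_{m}=AY^{m}$ and $X^{m}\in J_{-m}=AX^{m}$. As $M$ is maximal, Lemma~\ref{IJmax} gives that $J(M)$ is a maximal ideal of $W$.

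For the second assertion, let $P$ be any prime ideal of $W$ with $X^{r},Y^{r}\in P$ for some $r\in\N$. I would first force $r$ down to $m$: if $r<m$ then $Au+A\alpha^{i}(u)=A$ for $1\le i\le r$ by (i), so the final clause of Lemma~\ref{YmorXm} would give $P=W$, a contradiction; hence $r\ge m$. Since $Au+A\alpha^{j}(u)=A$ for every $j$ with $m<j\le r$, repeated application of the first part of Lemma~\ref{YmorXm} descends $Y^{r}\in P$ to $Y^{m}\in P$ and, likewise, $X^{r}\in P$ to $X^{m}\in P$; the descent stops there because $Au+A\alpha^{m}(u)=M\ne A$. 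Now Lemma~\ref{XnYn}, applied with $I=Au+A\alpha^{m}(u)=M$ so that $J(I)=J(M)$, yields $J(M)\subseteq P$; as $J(M)$ is maximal and $P$ is proper, $P=J(M)$. Since $J(M)$ itself contains $X^{m}$ and $Y^{m}$, it is the unique such prime.

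For the ``moreover'' clause, assume in addition that $A$ is $\alpha$-simple and that no power of $\alpha$ is inner. I would record two preliminary facts. First, since $u$ is central and regular in $A$, so is each $\alpha^{i}(u)$, and a routine check on the $\Z$-graded pieces $W_{i}=AY^{i}$, $W_{-i}=AX^{i}$ of $W$ then shows that $X$ and $Y$ are regular in $W$; consequently the localization maps of $W$ into the skew Laurent rings $A[Y^{\pm1};\alpha]$ and $A[X^{\pm1};\alpha^{-1}]$ of Definitions~\ref{GWA} are injective. Secondly, $A$ is also $\alpha^{-1}$-simple and no power of $\alpha^{-1}$ is inner, so, by the standard simplicity criterion for skew Laurent extensions of such a ring (see \cite[Theorem~1.8.5]{McCR}; cf.\ Remark~\ref{altcon}), both $A[Y^{\pm1};\alpha]$ and $A[X^{\pm1};\alpha^{-1}]$ are simple. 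Now take any non-zero prime ideal $P$ of $W$. If $P$ contained no power of $Y$, then the Ore set $\{Y^{i}\}_{i\ge1}$ would be disjoint from $P$, so the extension $P\cdot A[Y^{\pm1};\alpha]$ would be a proper, hence zero, ideal of the simple ring $A[Y^{\pm1};\alpha]$, and injectivity of the localization map would force $P=0$, which is impossible. Hence $Y^{i}\in P$ for some $i\ge1$, and symmetrically $X^{j}\in P$ for some $j\ge1$; taking $r=\max\{i,j\}$ puts $X^{r},Y^{r}\in P$, and the second assertion gives $P=J(M)$. As $J(M)$ is prime (being maximal) and non-zero (it contains the regular element $X^{m}$), it is the unique non-zero prime ideal of $W$.

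The argument is essentially bookkeeping on top of Proposition~\ref{J} and Lemmas~\ref{YmorXm}, \ref{IJmax} and~\ref{XnYn}; the one genuinely substantive point is the simplicity of the two skew Laurent rings, which is exactly where the hypothesis ``no power of $\alpha$ is inner'' is used, and I would expect pinning down and correctly citing that classical simplicity criterion (for skew Laurent extensions of an $\alpha$-simple ring) to be the step requiring the most care. Everything else—the regularity of $X$ and $Y$, the behaviour of ideals under the two Ore localizations, and the descent in powers of $X$ and $Y$—is routine.
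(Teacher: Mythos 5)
Your proof is correct and follows essentially the same route as the paper: Lemma~\ref{IJmax} and Proposition~\ref{J} for maximality and the containments, Lemmas~\ref{YmorXm} and~\ref{XnYn} for uniqueness among primes containing powers of $X$ and $Y$, and simplicity of the two skew Laurent localizations (via \cite[Theorem 1.8.5]{McCR}) for the final clause. The only cosmetic difference is that you argue inline that a non-zero prime must meet the Ore sets $\{X^i\}$ and $\{Y^i\}$, where the paper simply cites \cite[Lemma 3.1]{ambiskew} for this standard localization fact.
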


\begin{proof}
 By Lemmas \ref{IJmax} and \ref{XnYn} respectively, $J(M)$ is maximal and is the unique prime ideal in $W$ containing $X^m$ and $Y^m$.

Let $K$ be an ideal of $W$ containing $X^r$ and $Y^r$ for some $r \in \mathbb{N}$. By Lemma \ref{YmorXm}, if $0<r<m$ then $K=W$ and if  $r>m$ then $X^m\in K$ and $Y^m\in K$. Hence $J(M)$ is the unique prime ideal $P$ in $W$ for which there exists $r \in \mathbb{N}$ such that $X^r\in P$ and $Y^r\in P$.

Now suppose that $A$ is $\alpha$-simple and that no power of $\alpha$ is inner.
Let $P$ be a non-zero prime ideal of $W$.
Recall from Definitions~\ref{GWA} that
$A[Y^{\pm 1};\alpha]$ and $A[X^{\pm 1};\alpha^{-1}]$ are the localizations of $W$ at the Ore sets $\{Y^i: i\geq 1\}$ and $\{X^i: i\geq 1\}$  respectively. These rings are simple, by \cite[Theorem 1.8.5]{McCR}, so, by \cite[Lemma 3.1]{ambiskew}, there exist $r,s$ such that $X^r\in P$ and $Y^s\in P$. Replacing $r$ and $s$ by their maximum, we can assume that $r=s$. By the above $P=J(M)$.
\end{proof}

\begin{cor}\label{unique}
Let $R$ be a conformal ambiskew polynomial ring of the form $R(A,\alpha,u-\alpha(u),1)$, where $u\in Z(A)\backslash \K$ and the $\K$-algebra $A$ is a domain such that $A[y^{\pm 1},\alpha]$ is simple and $Z(A[y^{\pm 1};\alpha])=\K$. Let $\lambda\in \K$ be such that
$(u+\lambda)A+\alpha^m(u+\lambda)A\neq A$ for some $m\geq 1$. If the ideal $(u+\lambda)A+\alpha^m(u+\lambda)A$ is maximal and $(u+\lambda)A+\alpha^n(u+\lambda)A=A$ for all $n\in \N\backslash\{m\}$  then $R/(z-\lambda)R$ has a unique non-zero prime ideal.
\end{cor}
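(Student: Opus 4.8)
The plan is to transport the question to generalized Weyl algebras and then quote Theorem~\ref{YmorXmthm}. Recall from Definitions~\ref{GWA} that $R/(z-\lambda)R$ is isomorphic to the generalized Weyl algebra $W(A,\alpha,u+\lambda)$, so it suffices to prove that $W(A,\alpha,u+\lambda)$ has a unique non-zero prime ideal. I would apply Theorem~\ref{YmorXmthm} to $W(A,\alpha,u+\lambda)$, that is, with the central element there taken to be $u+\lambda$ rather than $u$. First I would check the standing hypotheses of that theorem: $u+\lambda\in Z(A)$ because $u\in Z(A)$ and $\lambda\in\K$, and $u+\lambda$ is regular in the domain $A$ because $u+\lambda\neq 0$ (if $u+\lambda=0$ then $u=-\lambda\in\K$, contrary to $u\notin\K$).

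Next I would match the numbered conditions. Using the centrality of $u+\lambda$ to identify $A(u+\lambda)=(u+\lambda)A$, and similarly for its $\alpha$-images, the hypothesis $(u+\lambda)A+\alpha^n(u+\lambda)A=A$ for all $n\in\N\backslash\{m\}$ is precisely condition (i) of Theorem~\ref{YmorXmthm}, and the hypothesis that $M:=(u+\lambda)A+\alpha^m(u+\lambda)A$ is maximal (hence, in particular, proper) is precisely condition (ii). Theorem~\ref{YmorXmthm} then yields immediately that $J(M)$ is a maximal ideal of $W(A,\alpha,u+\lambda)$ containing a common power of $X$ and $Y$, and that it is the unique prime with that property.

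For uniqueness among \emph{all} non-zero primes I would invoke the final clause of Theorem~\ref{YmorXmthm}, whose extra hypotheses are that $A$ be $\alpha$-simple and that no power of $\alpha$ be inner. Both are available here: by Remark~\ref{altcon}, the standing assumptions that $A[y^{\pm 1};\alpha]$ is simple and $Z(A[y^{\pm 1};\alpha])=\K$ are equivalent to $A$ being $\alpha$-simple, $\alpha^n$ being outer for every $n\geq 1$, and $\{a\in Z(A):\alpha(a)=a\}=\K$. Hence $J(M)$ is the unique non-zero prime ideal of $W(A,\alpha,u+\lambda)$, and transporting this back through the isomorphism $R/(z-\lambda)R\simeq W(A,\alpha,u+\lambda)$ finishes the proof.

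There is essentially no new mathematical content; the substance is in Theorem~\ref{YmorXmthm} and its supporting lemmas. The only thing requiring care is the bookkeeping: checking that the hypotheses of the corollary correspond exactly to conditions (i)--(ii) of Theorem~\ref{YmorXmthm} under the substitution $u\mapsto u+\lambda$, and that the two conditions on $A[y^{\pm 1};\alpha]$ really do supply both $\alpha$-simplicity of $A$ and the outerness of all positive powers of $\alpha$ via Remark~\ref{altcon}. That translation is the ``main obstacle'', such as it is.
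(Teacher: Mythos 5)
Your proposal is correct and is exactly the paper's argument: the paper's proof of this corollary is the single sentence ``This is immediate from Theorem~\ref{YmorXmthm} using the isomorphism between $R/(z-\lambda)R$ and $W(A,\alpha,u+\lambda)$,'' and your write-up simply makes explicit the hypothesis-checking (regularity of $u+\lambda$ via $u\notin\K$, and the $\alpha$-simplicity/outerness conditions via Remark~\ref{altcon}) that the paper leaves implicit.
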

\begin{proof}
This is immediate from Theorem~\ref{YmorXmthm} using the isomorphism between $R/(z-\lambda)R$ and $W(A,\alpha,u+\lambda)$.
\end{proof}

In the case of $U(sl_2)$ and $U_q(sl_2)$, the maximal ideals that arise in the form $J(M)$ are the annihilators of the finite-dimensional simple modules. These are well understood and provide nice illustrations of the theory developed above.
\begin{ex}\label{usltwo3}
Let $R$ be the enveloping algebra
$U(sl_2)$ as in Example~\ref{usltwo} and Example~\ref{usltwo2}. Thus $\ch(\K)=0$, $A=\K[t]$, $\alpha(t)=t+2$, $\rho=1$,
$u=\frac{-1}{4}(t-1)^2$ and $v=t$. We have seen that each $v^{(m)}=m(t+m-1)$, whence $v^{(m)}A$ is maximal in $A$, and that if $(z-\lambda)R$ is not maximal in $R$ then $\lambda=\frac{1}{4}m^2$ for some $m\in \N$. In this case,
 \[(u+\lambda)A+\alpha^m(u+\lambda)A=v^{(m)}A\] and
\[(u+\lambda)A+\alpha^n(u+\lambda)A=A\text{ for all }n\in \N\backslash\{m\}.\]
By Corollary~\ref{unique},   $R/(z-\lambda)R$ has a unique non-zero prime ideal.
\end{ex}

\begin{ex}\label{usltwoq3} Let $R$ be the quantum enveloping algebra
$U_q(sl_2)$  as in Example~\ref{usltwoq} and Example~\ref{usltwoq2}. Thus $q\in \K^*$ is not a root of unity, $A=\K[t^{\pm 1}]$,  $\alpha(t)=q^2t$, $\rho=1$, and $u=-(q^{-1}t+qt^{-1})/(q-q^{-1})^2$.
Every height one prime ideal of $R$ has the form $(z-\lambda)R$ and $R/(z-\lambda)R$ can be identified with $W(\K[t^{\pm 1}],\alpha,u+\lambda)$. Let $\lambda\in \K$, $m\in \N$ and $M_{m,\lambda}=(u+\lambda)A+\alpha^m(u+\lambda)A$. We have seen in Example~\ref{usltwoq2} that $M_{m,\lambda}\neq A$ if and only if is maximal if and only if $\lambda=\pm\frac{q^{-m}+q^m}{(q-q^{-1})^2}$.
Let $\lambda=\pm\frac{q^{-m}+q^m}{(q-q^{-1})^2}$. For $n\in\N\backslash\{m\}$,
 \[q^{-n}+q^n=\pm(q^{-m}+q^m)\Rightarrow (q^m\mp q^n)(1\mp q^{-(m+n)})=0,\]
so, as $q$ is not a root of unity, $M_{n,\lambda}=A$. It now follows from Corollary~\ref{unique} that $R/(z-\lambda)R$ has a unique non-zero prime ideal $J(M_{m,\lambda})$.
\end{ex}

In the next example, the exceptional maximal ideals $J(M)$ have infinite codimension over $\K$ and so are not annihilators of finite-dimensional simple modules.

\begin{ex}\label{spectrumSC}
Let $p\geq 1$ be odd, let $q\in \K^*$ and suppose that $q$ is not a root of unity. Let $R=R(A,\alpha,v,1)$ and its Casimir element $z$ be as in Example~\ref{quantum torus}.
We have seen that the height one prime ideals of $R$ are the ideals $(z-\lambda)R$ and, in Corollary~\ref{whenmax} that $(z -\lambda)R$ is maximal unless
\[\lambda=\pm q^\frac{p-2m+1}{4}(q^m+1)\text{ for some }m\in \N.\] To complete the analysis of the spectrum of $R$, let $m\in \N$ and let
\[\lambda=\pm q^\frac{p-2m+1}{4}(q^m+1).\]
It follows from Theorem~\ref{YmorXmthm},  together with Lemma~\ref{countableex} and its proof, that in this case
 $R/(z-\lambda)R$, which we identify with $W(A,\alpha,u+\lambda)$, has a unique non-zero prime ideal $J((z_{p}\pm q^\frac{p+2m-3}{4})A)$.
 Therefore the prime spectrum of $R$ consists of $0$, the height one prime ideals $(z-\lambda)R$, $\lambda\in \K$, and countably many height two prime ideals
\[F_{m,1}=\pi^{-1}(J((z_{p}-q^\frac{p+2m-3}{4})A))\text{ and }F_{m,-1}=\pi^{-1}(J((z_{p}+q^\frac{p+2m-3}{4})A)),\] where $m\in \N$ and each
$\pi:R\rightarrow R/(z-\lambda)R$ is the appropriate canonical epimorphism.
\end{ex}

\section{Goldie rank}
In Examples \ref{usltwo2}, \ref{usltwoq2} and \ref{spectrumSC}, the height one prime ideals are principal, generated by translates of the Casimir element, all but countably many of these are maximal and the other maximal ideals have height two. For $U(sl_2)$ in \ref{usltwo2} and $U_q(sl_2)$ in \ref{usltwoq2}, the height two maximals are annihilators of finite-dimensional simple modules and so the factor rings are matrix rings over $\K$. For $U(sl_2)$, there is one simple module of each dimension $d\in \N$ and so there is a unique height two maximal ideal of Goldie rank $d$. For $U_q(sl_2)$, there are two height two maximal ideals of Goldie rank $d$. In Example \ref{spectrumSC}, the simple factor rings $R/F_{m,1}$ and $R/F_{m,-1}$ are infinite-dimensional and hence not isomorphic to matrix rings over $\K$. It is the purpose of this section to show that, in the situation
of  Theorem~\ref{YmorXmthm}, but with the further condition that $A/M$ is a right Ore domain, the factor  $W/J(M)$ has
  Goldie rank $m$.
	\begin{notn}\label{W}For the remainder of the paper, let
$W=W(A,\alpha,u)$ be a generalized Weyl algebra, with $u$ central and regular in $A$, such that, for some fixed $m \in \mathbb{N}$,
$M:=Au+A\alpha^m(u)$ is such that $A/M$ is a simple right Ore domain and
 $Au+A\alpha^i(u)=A$ for $i\in\N\backslash\{m\}$.\end{notn}

\begin{notn}\label{Mi}
In the notation of \ref{W} and for $0\leq i\leq m-1$, let $M_i=\alpha^{-i}(M)=A\alpha^{-i}(u)+A\alpha^{m-i}(u)=\alpha^{-i}(u)A+\alpha^{m-i}(u)A$. Thus each $M_i$ is a maximal ideal.
As the generators $\alpha^s(u)$ are central, $M_iM_j=M_jM_i$ for $0\leq i,j\leq m-1$. Also $M_0,M_1,\dots,M_{m-1}$ are distinct for if $0\leq i<j<m$ and $\alpha^{-i}(M)=M_i=M_j=\alpha^{-j}(M)$ then $\alpha^{j-i}(u)\in \alpha^{j-i}(M)=M$, which is impossible as $Au+A\alpha^{j-i}(u)=A$. So the following result applies.
\end{notn}

\begin{lemma}\label{sumsandcaps}
Let $R$ be a ring with $m$ commuting distinct maximal ideals $M_0,M_1,\dots,M_{m-1}$. Let $0\leq i_1,\dots,i_r,j_1,\dots,j_s,k_1,\dots,k_t<m$ be distinct integers.

\rm{(i)} $M_{i_1}\dots M_{i_r}+M_{j_1}\dots M_{j_s}=R$.

\rm{(ii)} $M_{k_1}\dots M_{k_t}M_{i_1}\dots M_{i_r}+M_{k_1}\dots M_{k_t}M_{j_1}\dots M_{j_s}=M_{k_1}\dots M_{k_t}.$

\rm{(iii)} For $0\leq i\leq m-1$, $M_0\cap M_1\cap \dots\cap M_{i}=M_0M_1\dots M_{i}$.

\end{lemma}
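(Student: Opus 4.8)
The plan is to reduce all three parts to the single fact that products of the $M_i$ over disjoint index sets are comaximal, after which (ii) and (iii) fall out as purely formal ideal arithmetic. The starting point is the elementary observation that, for two-sided ideals $I,J,K$ of any unital ring, $I+J=R$ together with $I+K=R$ forces $I+JK=R$ (and, by the same computation, $I+KJ=R$): expanding $1=(a+b)(c+d)$ with $a,c\in I$, $b\in J$, $d\in K$ leaves $bd\in JK$ while $ac,ad,cb$ all lie in the two-sided ideal $I$. Since distinct maximal two-sided ideals are comaximal (if $M_j\subseteq M_i$ then maximality of $M_j$ and properness of $M_i$ force $M_i=M_j$, so for $i\neq j$ the ideal $M_i+M_j$ strictly contains $M_i$ and hence equals $R$), we have $M_i+M_j=R$ for all $i\neq j$.

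For (i) I would then run two nested inductions on the number of factors. First, fixing $b$ and inducting on $r$ via the lemma with $I=M_{j_b}$, one gets $M_{j_b}+M_{i_1}\cdots M_{i_r}=R$ for every $b$; then inducting on $s$ with $I=M_{i_1}\cdots M_{i_r}$ held fixed gives $M_{i_1}\cdots M_{i_r}+M_{j_1}\cdots M_{j_s}=R$, which is exactly (i). (It is here that distinctness of all the indices involved is used, since the lemma requires each pair $M_{i_a}+M_{j_b}=R$.)

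Part (ii) is then immediate: as all the $i$-, $j$- and $k$-indices are pairwise distinct, (i) applies to the $i$'s and $j$'s to give $M_{i_1}\cdots M_{i_r}+M_{j_1}\cdots M_{j_s}=R$, and multiplying this equality on the left by $K:=M_{k_1}\cdots M_{k_t}$, using $KR=K$, yields $KM_{i_1}\cdots M_{i_r}+KM_{j_1}\cdots M_{j_s}=K$. For (iii) I would induct on $i$, the case $i=0$ being trivial; for the inductive step put $N=M_0\cdots M_{i-1}$, which by the inductive hypothesis equals $M_0\cap\cdots\cap M_{i-1}$. The inclusion $NM_i\subseteq N\cap M_i$ is automatic, and conversely $N+M_i=R$ by (i), so writing $1=n+e$ with $n\in N$ and $e\in M_i$, any $x\in N\cap M_i$ satisfies $x=nx+ex\in NM_i+M_iN$; since the $M_j$ commute pairwise, $M_iN=NM_i$, and so $x\in NM_i$, giving $N\cap M_i=NM_i=M_0\cdots M_i$.

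All of the computations are routine; the only thing that needs a little care — the ``hard part'', such as it is — is the bookkeeping of the two nested inductions in (i) (which block of factors is absorbed into its comaximal partner first) together with the systematic use of the pairwise commutativity of the $M_j$ and of both forms $I+JK=R$, $I+KJ=R$ of the coprimality step, so that the possible non-commutativity of $R$ never interferes.
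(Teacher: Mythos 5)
Your proof is correct, and parts (ii) and (iii) essentially coincide with the paper's: (ii) is obtained in both by left-multiplying the identity of (i) by $M_{k_1}\cdots M_{k_t}$ and distributing, and (iii) by the same induction on $i$, the paper merely packaging your elementwise computation $x=nx+ex$ as the single inclusion $(J\cap M_i)(J+M_i)\subseteq JM_i+M_iJ=JM_i\subseteq J\cap M_i$. Where you genuinely diverge is part (i). The paper argues by contradiction: if $M_{i_1}\cdots M_{i_r}+M_{j_1}\cdots M_{j_s}$ were proper it would lie in some maximal ideal $M$, which, being prime, would contain some $M_{i_a}$ and some $M_{j_b}$, whence $M_{i_a}=M=M_{j_b}$, contradicting distinctness. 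You instead prove the elementary comaximality lemma ($I+J=R$ and $I+K=R$ imply $I+JK=R$, by expanding $1=(a+b)(c+d)$) and run two nested inductions over the factors. Your route is more constructive: it avoids both the appeal to Zorn's lemma (a proper two-sided ideal lies in a maximal one) and the primeness of maximal two-sided ideals, at the cost of the bookkeeping you acknowledge; the paper's argument is shorter but leans on those standard facts. The slip $cb$ for $bc$ in your expansion is harmless, since $I$ is two-sided either product lands in $I$.
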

\begin{proof}
\rm{(i)} Suppose not. Then there exists a maximal ideal $M$  such that $M_{i_1}\dots M_{i_r}+M_{j_1}\dots M_{j_s}\subseteq M$. As $M$ is prime,
there exist $1\leq a\leq r$ and $1\leq b\leq s$ such that $M_{i_a}\subseteq M$ and $M_{j_b}\subseteq M$. But then, by maximality, $M_{i_a}=M=M_{j_b}$,
contrary to the hypotheses.

\rm{(ii)} This is immediate from (i) and the law $I(J+K)=IJ+IK$.

\rm{(iii)} We proceed by induction on $i$. The statement is certainly true when $i=0$ so we may assume that $i>0$ and that
$M_0\dots M_{i-1}=M_0\cap \dots\cap M_{i-1}$. Let $J=M_0\cap \dots\cap M_{i-1}=M_0\dots M_{i-1}$.
Then $J+M_{i}=R$, by (i), so, as the $M_j$'s commute,
\[J\cap M_i=(J\cap M_{i})(J+M_{i})\subseteq JM_{i}+M_{i}J=JM_{i}\subseteq J\cap M_{i},\]
whence $M_0\cap M_1\cap \dots\cap M_{i}=M_0M_1,\dots M_{i}$.
\end{proof}

Our aim now is to find $m$ uniform right ideals of the $\Z$-graded ring $W/J(M)$  whose sum is direct and equal to $W/J(M)$.

\begin{notn}\label{Pi} In the notation of \ref{W} and \ref{Mi} and for $0\leq i<j\leq m-1$ and $i\leq r\leq j$, we shall denote the product $M_iM_{i+1}\dots M_{r-1}M_{r+1}\dots M_{j}$ by
$\Pi(M,i,\widehat{r},j)$ and the product $M_iM_{i+1}\dots M_{j}$ by
$\Pi(M,i,j)$.

The components  $(W/J(M))_d$ and
$(W/J(M))_{-d}$ are $0$ if $d\geq m$.  If $0\leq d<m$ then, by Lemma~\ref{sumsandcaps}(iii),
\[(W/J(M))_d=AY^d/\Pi(M,0,m-1-d)Y^d\text{ and }(W/J(M))_{-d}=AX^d/\Pi(M,d,m-1)X^d.\]
Each $(W/J(M))_d$ is an $A-A$-bimodule while,  in accordance with the proof of Proposition~\ref{J}, right and left multiplication by $Y$, respectively $X$, give well-defined maps $(W/J(M))_d\rightarrow (W/J(M))_{d+1}$,  respectively
 $(W/J(M))_d\rightarrow (W/J(M))_{d-1}$.
\end{notn}

\begin{notn}\label{Jr} In the notation of \ref{W}, \ref{Mi} and \ref{Pi} and for $0\leq r\leq m-1$, let  $J^{(r)}$ be the graded right ideal \[(\Pi(M,0,\widehat{r},m-1)W+J(M))/J(M)\]
of  $W/J(M)$.
The $0$-component of $J^{(r)}$ is \[\Pi(M,0,\widehat{r},m-1)/\Pi(M,0,m-1)).\]
If $d>m-r-1$ then $J^{(r)}_d=0$ and if
$1\leq d\leq m-r-1$ then
\[J^{(r)}_d=\Pi(M,0,\widehat{r},m-1-d)Y^d/\Pi(M,0,m-1-d)Y^d.\]
If $r<d$ then
 $J^{(r)}_{-d}=0$ and if $1<d\leq r$, then
\[J^{(r)}_{-d}=\Pi(M,d,\widehat{r},m-1)X^d/\Pi(M,d,m-1)X^d.\]
\end{notn}

\begin{lemma}\label{Jsum} In the notation of \ref{Jr}, the sum $J^{(0)}+J^{(1)}+\dots+J^{(m-1)}$ is direct and equal to $W/J(M)$.
\end{lemma}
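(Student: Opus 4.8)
The plan is to prove the statement one graded component at a time. Since $W/J(M)$ is a $\Z$-graded ring and each $J^{(r)}$ is a graded right ideal of it, it is enough to show that for every $d\in\Z$ the abelian group $(W/J(M))_d$ is the internal direct sum $J^{(0)}_d\oplus\cdots\oplus J^{(m-1)}_d$. The explicit descriptions of these components recorded in \ref{Pi} and \ref{Jr} will reduce this to a purely commutative statement about the pairwise comaximal distinct maximal ideals $M_0,\dots,M_{m-1}$ of $A$ from \ref{Mi}, namely a version of the Chinese Remainder Theorem.

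First I would isolate that commutative core. Let $N_0,\dots,N_k$ be pairwise comaximal maximal ideals of a ring $B$, put $C=\bigcap_{i=0}^k N_i$, which equals $\prod_{i=0}^k N_i$ by Lemma~\ref{sumsandcaps}(iii) after relabelling, and for $0\le r\le k$ put $P_r=\prod_{i\ne r}N_i$, so that $C\subseteq P_r$. Then $P_r\subseteq N_j$ for $j\ne r$, while $P_r+N_r=B$ by Lemma~\ref{sumsandcaps}(i). Hence, under the canonical isomorphism $B/C\cong\prod_{i=0}^k B/N_i$, the subgroup $P_r/C$ maps isomorphically onto the $r$-th coordinate subgroup; since these coordinate subgroups give a direct sum decomposition of $\prod_{i=0}^k B/N_i$, it follows that $B/C=\bigoplus_{r=0}^k P_r/C$, with $P_r/C\simeq B/N_r$.

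Then I would feed in the graded data. For $d\ge m$ both sides of the desired identity are zero by \ref{Pi} and \ref{Jr}. For $0\le d\le m-1$, set $k=m-1-d$; by \ref{Pi} the assignment $aY^d\mapsto a$ induces an isomorphism $(W/J(M))_d\simeq A/\Pi(M,0,k)$, and by \ref{Jr} it carries $J^{(r)}_d$ onto $\Pi(M,0,\widehat r,k)/\Pi(M,0,k)$ when $0\le r\le k$ and onto $0$ when $r>k$. Applying the core statement with $B=A$ and $(N_0,\dots,N_k)=(M_0,\dots,M_{m-1-d})$, so that $C=\Pi(M,0,k)$ and $P_r=\Pi(M,0,\widehat r,k)$, yields the identity in degree $d$ (the summands with $r>k$ being zero). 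The negative degrees are symmetric: for $d=-e$ with $1\le e\le m-1$, \ref{Pi} gives $(W/J(M))_{-e}\simeq A/\Pi(M,e,m-1)$ via $aX^e\mapsto a$, \ref{Jr} identifies the nonzero $J^{(r)}_{-e}$, those with $e\le r\le m-1$, with $\Pi(M,e,\widehat r,m-1)/\Pi(M,e,m-1)$, and one applies the core statement to the list $M_e,\dots,M_{m-1}$; for $e\ge m$ both sides vanish. Assembling over all $d$ gives simultaneously that the sum $J^{(0)}+\cdots+J^{(m-1)}$ is direct and that it equals $W/J(M)$.

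I do not expect a serious obstacle. The one point requiring care is the bookkeeping of which maximal ideals survive in a given degree --- the ``window'' $M_0,\dots,M_{m-1-d}$ when $d\ge0$ and $M_{-d},\dots,M_{m-1}$ when $d<0$ --- together with the fact that the $J^{(r)}_d$ indexed by $r$ outside that window vanish; both are already encoded in the component formulas of \ref{Pi} and \ref{Jr}, which themselves follow routinely from Proposition~\ref{J} and Lemma~\ref{sumsandcaps}.
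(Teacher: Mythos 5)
Your proof is correct and follows essentially the same route as the paper: both arguments work one graded component at a time and reduce the claim, via the component formulas of \ref{Pi} and \ref{Jr}, to the decomposition of $A/\Pi(M,\cdot,\cdot)$ coming from the distinct commuting comaximal maximal ideals $M_i$, using Lemma~\ref{sumsandcaps}. The only difference is organisational: you package that final step as a Chinese Remainder Theorem isomorphism onto coordinate subgroups, whereas the paper computes the partial sums $J^{(0)}_d+\dots+J^{(s)}_d=\Pi(M,s+1,\cdot)/\Pi(M,0,\cdot)$ inductively with Lemma~\ref{sumsandcaps}(ii) and checks trivial intersection with $J^{(s+1)}_d$ via Lemma~\ref{sumsandcaps}(iii).
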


\begin{proof}
It suffices to show that, for $-m<d<m$, $J^{(0)}_d+J^{(1)}_d+\dots +J^{(m-1)}_d=W/J(M)_d$ and that the sum is direct.

Let $0<s<m$.
Then, by repeated use of Lemma~\ref{sumsandcaps}(ii),
\begin{eqnarray*}&&J^{(0)}_0+J^{(1)}_0+\dots+J^{(s)}_0\\
&=&(\Sigma_{j=0}^s\Pi(M,0,\widehat{j},m-1))/\Pi(M,0,m-1)\\
&=&\Pi(M,s+1,m-1)/\Pi(M,0,m-1).
\end{eqnarray*}

Similar calculations show that if $0<d<m$ then
\begin{eqnarray*}&&J^{(0)}_d+\dots+J^{(s)}_d\\
&=&(\Sigma_{j=0}^s\Pi(M,0,\widehat{j},m-d-1))Y^d/\Pi(M,0,m-1)Y^d\\
&=&\Pi(M,s+1,m-d-1)Y^d/\Pi(M,0,m-d-1)Y^d
\end{eqnarray*}
and
\begin{eqnarray*}&&J^{(0)}_{-d}+\dots+J^{(s)}_{-d}\\
&=&(\Sigma_{j=d}^s\Pi(M,d,\widehat{j},m-1))X^d/\Pi(M,d,m-1)X^d\\
&=&\Pi(M,s+1,m-1)X^d/\Pi(M,d,m-1)X^d.
\end{eqnarray*}
Taking $s=m-1$ above, \[J^{(0)}_0+J^{(1)}_0+\dots+J^{(m-1)}_0=
A/\Pi(M,0,m-1)=(W/J(M))_0\] and, for $0<d<m$, \[J^{(0)}_d+J^{(1)}_d+\dots+J^{(m-1)}_d=
AY^d/\Pi(M,0,m-d-1)Y^d=(W/J(M))_d\] and \[J^{(0)}_{-d}+J^{(1)}_{-d}+\dots+J^{(m-1)}_{-d}=
AX^d/\Pi(M,d,m-1)X^d=(W/J(M))_{-d}.\]
It follows that $J^{(0)}+J^{(1)}+\dots +J^{(m-1)}=W/J(M)$.

Also, if $s<m-1$ then \[J^{(s+1)}_0=\Pi(M,0,\widehat{s+1},m-1)/\Pi(M,0,m-1))\] and if $0<d<m$ then
\[J^{(s+1)}_d=\Pi(M,0,\widehat{s+1},m-d-1)Y^d/\Pi(M,0,m-1)Y^d\] and  \[J^{(s+1)}_{-d}=\Pi(M,d,\widehat{s+1},m-1)X^d/\Pi(M,d,m-1)X^d.\]

Thus, using Lemma~\ref{sumsandcaps}(iii), \[(J^{(0)}_d+J^{(1)}_d+\dots+J^{(s)}_d)\cap J^{(s+1)}_d=0\] for all $d$ and so
\[(J^{(0)}+J^{(1)}+\dots+J^{(s)})\cap J^{(s+1)}=0,\] whence the sum $J^{(0)}+J^{(1)}+\dots+J^{(m-1)}$ is direct.
\end{proof}

\begin{lemma}\label{uniform}
For $0\leq r\leq m-1$, the right ideal $J^{(r)}$ of $W$ is uniform.
\end{lemma}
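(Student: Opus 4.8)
The plan is to exploit the $\mathbb{Z}$-grading on $W/J(M)$. By Notation~\ref{Jr} the nonzero homogeneous components of $J^{(r)}$ occupy exactly the $m$ consecutive degrees $-r,-r+1,\dots,m-1-r$. I shall establish (a) that the top component $J^{(r)}_{m-1-r}$ is a uniform right $A$-module, and (b) that every nonzero $W$-submodule of $J^{(r)}$ has nonzero intersection with $J^{(r)}_{m-1-r}$. These give the lemma at once: if $N,N'$ are nonzero submodules of $J^{(r)}$, then $N\cap J^{(r)}_{m-1-r}$ and $N'\cap J^{(r)}_{m-1-r}$ are nonzero right $A$-submodules of the uniform module $J^{(r)}_{m-1-r}$, so they meet, and hence $N\cap N'\ne 0$.

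For (a): by Lemma~\ref{sumsandcaps}(i) and (iii), the description in Notation~\ref{Jr} identifies $J^{(r)}_{m-1-r}$, as a right $A$-module, with $(A/M_r)Y^{m-1-r}$ — using that $\Pi(M,0,\widehat r,r)=\bigcap_{j<r}M_j$ is comaximal with $M_r$ and has intersection $\Pi(M,0,r)$ with it — and a routine twist computation identifies $(A/M_r)Y^{m-1-r}$ with $A/\alpha^{-(m-1)}(M)$ as a right $A$-module. Since $\alpha^{m-1}$ induces a ring isomorphism $A/\alpha^{-(m-1)}(M)\to A/M$, the ring $A/\alpha^{-(m-1)}(M)$ is a simple right Ore domain; being a quotient of $A$ by a two-sided ideal, its right $A$-submodules are its right ideals, which pairwise meet, so $J^{(r)}_{m-1-r}$ is uniform as a right $A$-module.

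The heart of the argument, and the step I expect to be the main obstacle, is the claim behind (b): for each $d$ in the support with $d<m-1-r$, right multiplication by $Y$ is an \emph{injective} map $J^{(r)}_d\to J^{(r)}_{d+1}$. Granting this, (b) follows by ``pushing to the top'': for $0\ne n$ in a nonzero submodule $N$, let $a$ be the least degree of a nonzero homogeneous component of $n$; then in $nY^{m-1-r-a}\in N$ every component of $n$ of degree $>a$ is sent past degree $m-1-r$, where $J^{(r)}$ vanishes, while the degree-$a$ component is carried injectively into $J^{(r)}_{m-1-r}$, so $0\ne nY^{m-1-r-a}\in N\cap J^{(r)}_{m-1-r}$. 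To prove the injectivity claim I would compute the kernel directly, splitting into $d\ge 0$ and $d<0$. For $d\ge 0$ the map is induced by $aY^d\mapsto aY^{d+1}$, with kernel $(\Pi(M,0,\widehat r,m-1-d)\cap\Pi(M,0,m-2-d))/\Pi(M,0,m-1-d)$; rewriting products of the commuting distinct maximal ideals $M_i$ as intersections (Lemma~\ref{sumsandcaps}(iii)) and using that $r\le m-2-d$ whenever $d<m-1-r$, the two index sets involved have union $\{0,\dots,m-1-d\}$, so this kernel collapses to $0$. For $d=-e<0$ one computes $\overline{aX^e}\cdot Y=\overline{a\,\alpha^{1-e}(u)X^{e-1}}$, and the hypotheses $Au+A\alpha^i(u)=A$ for $i\ne m$ force $\alpha^{1-e}(u)$ to lie in $M_{e-1}$ but in no other $M_i$ with $0\le i\le m-1$; as $r\ge e>e-1$ this element lies outside the prime ideal $M_r$, so any kernel element is driven into $M_r\cap\Pi(M,e,\widehat r,m-1)=\Pi(M,e,m-1)$, whence the kernel is again $0$. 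The bookkeeping of which $M_i$ contains which shift $\alpha^j(u)$ of $u$, and the consequent collapse of products of the $M_i$ to intersections, is all confined to this one computation; the surrounding grading argument is routine.
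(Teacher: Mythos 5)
Your proof is correct, and it shares the paper's overall strategy --- reduce uniformity of $J^{(r)}$ to uniformity of a single graded component as a right $A$-module (a twist of the right Ore domain $A/M$), after showing that every non-zero element of $J^{(r)}$ can be driven into that component by right multiplication --- but your implementation of the reduction step is genuinely different. The paper targets the degree-zero component $J^{(r)}_0\simeq A/M_r$: it proves that right multiplication by $X^d$ (resp.\ $Y^d$) maps $J^{(r)}_d$ (resp.\ $J^{(r)}_{-d}$) injectively into $J^{(r)}_0$, and then, to handle inhomogeneous elements, constructs an auxiliary central element $t$ (a product of shifts $\alpha^{-\ell}(u)$ with $\ell\neq r, r+m$) which annihilates every component $J^{(r)}_d$ with $d\neq 0$ while acting faithfully on $J^{(r)}_0$, so that a suitable $jwt$ is non-zero and homogeneous of degree $0$. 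You instead aim at the top component $J^{(r)}_{m-1-r}\simeq A/\alpha^{-(m-1)}(M)$ and use the leading-term device for graded modules with bounded support: one power of $Y$ carries the lowest non-zero component of an arbitrary element injectively to the top degree, while all higher components overshoot into degrees where $J^{(r)}$ vanishes. This dispenses with $t$ entirely, at the cost of verifying injectivity of each single step $J^{(r)}_d\to J^{(r)}_{d+1}$; your verification is sound, including the negative-degree steps where $Y$ acts through the central element $\alpha^{1-e}(u)$, whose membership in $M_{e-1}$ and in no other $M_i$ you track correctly, and the collapse of the kernel ideals via products-equal-intersections. Note only that Lemma~\ref{sumsandcaps}(iii) is stated for initial segments $M_0\cdots M_i$, so you are implicitly invoking its (easy, and also implicitly used by the paper) extension to arbitrary subproducts of the commuting distinct maximal ideals.
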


\begin{proof} First consider the $A$-module $J^{(r)}_0$. Using Lemma~\ref{sumsandcaps},
\begin{eqnarray*}
J^{(r)}_0&=&\Pi(M,0,\widehat{r},m-1)/\Pi(M,0,m-1)\\
&=&\Pi(M,0,\widehat{r},m-1)/M_r\cap\Pi(M,0,\widehat{r},m-1)\\
&\simeq&(M_r+\Pi(M,0,\widehat{r},m-1))/M_r\\
&=&A/M_r.
\end{eqnarray*}
As $A/M$ and $A/M_r$ are isomorphic rings, $A/M_r$ is a right Ore domain and hence $J^{(r)}_0$ is a uniform right $A$-module.

We next show that if $0\neq j\in J^{(r)}$ then there exists $w\in W$ such that
$jw$ has non-zero component in degree $0$.

Let $d>0$ be such that $J^{(r)}_d\neq 0$. Thus $d\leq m-r-1$. Let \[h=aY^d+\Pi(M,0,m-d-1)Y^d\in J^{(r)}_d,\] where $a\in \Pi(M,0,\widehat{r},m-d-1)$,
and suppose that, in $J^{(r)}_{d-1}$, $hX=0$. As $aY^dX=aY^{d-1}\alpha(u)=a\alpha^d(u)Y^{d-1}$, \[0=hX=a\alpha^d(u)Y^{d-1}+\Pi(M,0,m-d)Y^{d-1}=0.\] Hence $a\alpha^d(u)\in M_r$ so either $a\in M_r$ or $\alpha^d(u)\in M_r$.
 But $0<d+r<m$ and $\alpha^i(u)\notin M$ for $0<i<m$ so $\alpha^d(u)\notin M_r=\alpha^{-r}(M)$. Therefore $a\in M_r$ so \[a\in M_r\cap \Pi(M,0,\widehat{r},m-d-1)=\Pi(M,0,m-d-1),\] by Lemma~\ref{sumsandcaps}(iii), and $h=0$.
It follows that if $0\neq h\in J^{(r)}_d$ then $0\neq hX^d\in J^{(r)}_0$.
A similar argument shows that if $0\neq h\in J^{(r)}_{-d}$ then $0\neq hY^d\in J^{(r)}_0$.
Therefore if $0\neq j\in J^{(r)}$ then there exists $w\in W$ such that
$jw$ has non-zero component in degree $0$.

Now let
 \[t=\alpha^{-(r+m-1)}(u)\dots\alpha^{-(r+1)}(u)\alpha^{-(r-1)}(u)\dots \alpha^{-(r-m+1)}(u).\] We shall see that, with $j$ and $w$ as above, $0\neq jwt\in J^{(r)}_0$.
Let $d>0$ and, as above, let \[h=aY^d+\Pi(M,0,m-d-1)Y^d\in J^{(r)}_d,\] where $a\in \Pi(M,0,\widehat{r},m-d-1)$. Then \[h\alpha^{-(r+d)}(u)=a\alpha^{-r}(u)Y^d+\Pi(M,0,m-d-1)Y^d=0,\] as
$\alpha^{-r}(u)\in M_r$ whence \[a\alpha^{-r}(u)\in M_r\cap \Pi(M,0,\widehat{r},m-d-1)=\Pi(M,0,m-d-1).\] Thus
$J^{(r)}_d\alpha^{-(r+d)}(u)=0$. Similarly, $J^{(r)}_{-d}\alpha^{d-r}(u)=0$. It follows that $J^{(r)}t\subseteq J^{(r)}_0$.

Let $h=a+\Pi(M,0,m-1)\in J^{(r)}_0$, where
$a\in \Pi(M,0,\widehat{r},m-1)$. Suppose that $ht=0$. Then $at\in M_r$ so $M_r$ contains one of
$a$, $\alpha^{-(r+m-1)}(u)$,$\dots$, $\alpha^{-(r+1)}(u)$, $\alpha^{-(r-1)}(u)$, $\dots$, $\alpha^{-(r-m+1)}(u)$. But the only integers $\ell$ such that $\alpha^{-\ell}(u)\in M_r$ are $r$ and $m+r$ so $a\in M_r$ and $h=0$.

Combining the previous three paragraphs, if $0\neq j\in J^{(r)}$ then there exists $w\in W$ such that $jw$ has non-zero component in degree $0$,
$jwt$ is homogeneous of degree $0$  and
$jwt\neq 0.$

Finally, let $j_1, j_2\in J^{(r)}\backslash\{0\}$. By the above, there exist $v_1, v_2\in W$ such that $j_1v_1$ and $j_2v_2$ are non-zero and homogeneous of degree $0$.
As $J^{(r)}_0$ is a uniform right $A$-module, it follows that
$j_1W\cap j_2W\neq 0$ and hence that $J^{(r)}$ is a uniform right $W$-module.
\end{proof}

\begin{prop}\label{Grank}
Let $W=W(A,\alpha,u)$ be a generalized Weyl algebra, with $u$ central and regular in $A$, such that, for some fixed $m \in \mathbb{N}$,
$M:=Au+A\alpha^m(u)$ is such that $A/M$ is a simple right Ore domain and
 $Au+A\alpha^i(u)=A$ for $i\in\N\backslash\{m\}$. Then the ring $W/J(M)$ has right Goldie rank $m$.
\end{prop}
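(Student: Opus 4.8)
The plan is to obtain the proposition as an immediate consequence of Lemma~\ref{Jsum} and Lemma~\ref{uniform}, which between them already contain all the substantive content; what remains is routine bookkeeping about uniform dimension.

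First I would note that, by Lemma~\ref{IJmax} with $I=M$ --- legitimate since $M$ is a maximal ideal of $A$, $A/M$ being a simple domain --- the ideal $J(M)$ is maximal in $W$, so that $W/J(M)$ is a simple, and in particular prime, ring; its right Goldie rank is therefore well defined and equals the right uniform (Goldie) dimension of $W/J(M)$ regarded as a right module over itself, equivalently as a right $W$-module. By Lemma~\ref{Jsum} this module decomposes as the internal direct sum $J^{(0)}\oplus J^{(1)}\oplus\dots\oplus J^{(m-1)}$ of the $m$ graded right ideals defined in Notation~\ref{Jr}, and by Lemma~\ref{uniform} each summand $J^{(r)}$ is a uniform right $W$-module, hence has right uniform dimension $1$. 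Since the right uniform dimension of a finite direct sum of modules is the sum of their right uniform dimensions (a standard property of Goldie dimension; see, e.g., \cite{McCR}), it follows that $W/J(M)$ has right uniform dimension $\sum_{r=0}^{m-1}1=m$; that is, $W/J(M)$ has right Goldie rank $m$, as asserted.

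Thus there is in fact no obstacle left in the proof of the proposition itself, which is a single line once the preceding lemmas are in place. The real difficulty is concentrated in Lemma~\ref{uniform}: there one must, for each $r$, exhibit a product $t$ of the central elements $\alpha^{\ell}(u)$ that carries every nonzero homogeneous element of $J^{(r)}$ into the degree-$0$ component $J^{(r)}_0$ without annihilating it, so that uniformity of $J^{(r)}$ follows from uniformity of $J^{(r)}_0\simeq A/M_r\simeq A/M$ over the right Ore domain $A/M$; checking that $t$ kills none of the relevant elements is precisely where the hypotheses $Au+A\alpha^i(u)=A$ for $i\in\N\backslash\{m\}$ (equivalently, $\alpha^i(u)\notin M$ for $0<i<m$) are used.
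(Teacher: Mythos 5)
Your proof is correct and is essentially the paper's own argument: the paper deduces Proposition~\ref{Grank} immediately from Lemma~\ref{Jsum} (the direct sum decomposition $W/J(M)=J^{(0)}\oplus\dots\oplus J^{(m-1)}$) and Lemma~\ref{uniform} (uniformity of each $J^{(r)}$), exactly as you do. Your additional remarks on additivity of uniform dimension and on where the real work lies are accurate but do not change the route.
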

\begin{proof}
This is immediate from Lemmas~\ref{Jsum} and \ref{uniform}.
\end{proof}

The next result amends Corollary~\ref{unique} to include the information on Goldie rank given by Proposition~\ref{Grank}.
\begin{cor}\label{unique2}
Let $R$ be a conformal ambiskew polynomial ring of the form $R(A,\alpha,u-\alpha(u),1)$, where $u\in Z(A)\backslash \K$ and the $\K$-algebra $A$ is a domain such that $A[y^{\pm 1},\alpha]$ is simple and $Z(A[y^{\pm 1};\alpha])=\K$. Let $\lambda\in \K$ be such that the ideal $(z-\lambda)R$ is not maximal and $(u+\lambda)A+\alpha^m(u+\lambda)A\neq A$ for some $m\geq 1$. If the factor $A/(u+\lambda)A+\alpha^m(u+\lambda)A$ is a simple  right Ore domain and $(u+\lambda)A+\alpha^n(u+\lambda)A=A$ for all $n\in \N\backslash\{m\}$  then $R/(z-\lambda)R$ has a unique non-zero prime ideal $P/(z-\lambda)R$
and $R/P$ has right Goldie rank $m$.
\end{cor}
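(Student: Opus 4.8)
The plan is to obtain this directly from Corollary~\ref{unique} and Proposition~\ref{Grank}, bridged by the standing identification $R/(z-\lambda)R\simeq W(A,\alpha,u+\lambda)$ recalled in Definitions~\ref{GWA}. First I would set $u':=u+\lambda$. Since $u\in Z(A)\backslash\K$, the element $u'$ is central in $A$ and, as $A$ is a domain and $u'\neq 0$, it is regular; so $u'$ can play the role of the central regular element in Notation~\ref{W} and in the hypotheses of Theorem~\ref{YmorXmthm} and Proposition~\ref{Grank}. The hypothesis that $A/\bigl((u+\lambda)A+\alpha^m(u+\lambda)A\bigr)$ is a simple right Ore domain says in particular that $M:=Au'+A\alpha^m(u')$ is a maximal (two-sided) ideal of $A$; together with the assumptions $Au'+A\alpha^i(u')=A$ for all $i\in\N\backslash\{m\}$ and $M\neq A$, this matches exactly the set-up of Notation~\ref{W}. (The stated hypothesis that $(z-\lambda)R$ is not maximal is then automatic by Corollary~\ref{zlambdamax}, but keeping it does no harm.)

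With this dictionary in place, Corollary~\ref{unique} applies verbatim: because $M$ is a maximal ideal of $A$ and $(u+\lambda)A+\alpha^n(u+\lambda)A=A$ for all $n\in\N\backslash\{m\}$, the factor $R/(z-\lambda)R$ has a unique non-zero prime ideal, which I would write as $P/(z-\lambda)R$. Tracing through the proof of Theorem~\ref{YmorXmthm}, this prime ideal corresponds under the isomorphism $R/(z-\lambda)R\simeq W(A,\alpha,u')$ to the maximal ideal $J(M)$ of $W(A,\alpha,u')$ constructed in Proposition~\ref{J}.

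Finally I would invoke Proposition~\ref{Grank} for the generalized Weyl algebra $W=W(A,\alpha,u')$: its hypotheses are precisely the conditions on $u'$ and $M$ checked in the first paragraph, so $W/J(M)$ has right Goldie rank $m$. Transporting back along the isomorphism, $R/P\simeq W/J(M)$, so $R/P$ has right Goldie rank $m$, completing the proof.

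There is no genuine obstacle here, since all the real work was carried out in Corollary~\ref{unique} and Proposition~\ref{Grank}; the only point demanding a moment's attention is bookkeeping — verifying that the single phrase ``$A/M$ is a simple right Ore domain'' supplies both the maximality of $M$ needed to apply Corollary~\ref{unique} and the simple-right-Ore-domain condition needed to apply Proposition~\ref{Grank}, and that $u+\lambda$ is central and regular (immediate, as $A$ is a domain and $u\notin\K$).
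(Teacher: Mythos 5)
Your proposal is correct and follows essentially the same route as the paper: the paper's proof is the one-line observation that the statement is immediate from Theorem~\ref{YmorXmthm} (equivalently Corollary~\ref{unique}) together with Proposition~\ref{Grank}, via the identification $R/(z-\lambda)R\simeq W(A,\alpha,u+\lambda)$. Your expansion merely makes explicit the bookkeeping (centrality and regularity of $u+\lambda$, and that simplicity of $A/M$ yields maximality of $M$) that the paper leaves to the reader.
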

\begin{proof}
This is immediate from Theorem~\ref{YmorXmthm} using the isomorphism between $R/(z-\lambda)R$ and $W(A,\alpha,u+\lambda)$.
\end{proof}

\begin{cor}\label{GrankS}
Suppose that $q$ is not a root of unity. Let $R=R(A,\alpha,v,1)$ be  as in Examples ~\ref{quantum torus} and \ref{spectrumSC}.
  Let $m\in \N$. The prime ideals $F_{m,1}$ and $F_{m,-1}$ of $R$ specified in Example~\ref{spectrumSC} have right Goldie rank $m$.
\end{cor}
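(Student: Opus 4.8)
The plan is to reduce the statement, via the identifications already set up in Example~\ref{spectrumSC}, to a direct application of Proposition~\ref{Grank} (equivalently Corollary~\ref{unique2}) for the generalized Weyl algebra $W(A,\alpha,u+\lambda)$, where $\lambda=\pm q^{\frac{p-2m+1}{4}}(q^m+1)$. First I would recall from Example~\ref{spectrumSC} that $F_{m,1}=\pi^{-1}\bigl(J((z_{p}-q^{\frac{p+2m-3}{4}})A)\bigr)$ and $F_{m,-1}=\pi^{-1}\bigl(J((z_{p}+q^{\frac{p+2m-3}{4}})A)\bigr)$, where $\pi\colon R\to R/(z-\lambda)R$ is the canonical epimorphism and $R/(z-\lambda)R$ is identified with $W(A,\alpha,u+\lambda)$. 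Hence $R/F_{m,\pm1}\cong W(A,\alpha,u+\lambda)/J(M_\pm)$ with $M_\pm=(z_{p}\mp q^{\frac{p+2m-3}{4}})A$, so it suffices to show each such factor has right Goldie rank $m$.

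Next I would check the hypotheses of Proposition~\ref{Grank} with $u$ replaced by $u+\lambda$. The element $u+\lambda$ is central in $A$ because $u\in Z(A)$, and it is regular because the quantum torus $A$ is a domain. By Lemma~\ref{countableex}, for $\lambda=\pm q^{\frac{p-2m+1}{4}}(q^m+1)$ we have $(u+\lambda)A+\alpha^m(u+\lambda)A=M_\pm$, a maximal (and completely prime) ideal of $A$, while $(u+\lambda)A+\alpha^a(u+\lambda)A=A$ for all $a\in\N\backslash\{m\}$. The only remaining point is that $A/M_\pm$ is a simple right Ore domain: since $z_{p}$ is central in $A$, the quotient $A/(z_{p}-\mu)A$ for $\mu\in\K^*$ is precisely the quantum torus $B$ in the generators $z_1^{\pm1},\dots,z_{p-1}^{\pm1}$ considered in the proof of Lemma~\ref{countableex}, which by \cite[Proposition~1.3]{McCP} is simple with centre $\K$; being also a Noetherian domain, it is a (simple) right Ore domain. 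Thus all the conditions of Proposition~\ref{Grank} are satisfied, and $W(A,\alpha,u+\lambda)/J(M_\pm)$ has right Goldie rank $m$; applying this to both signs of $\lambda$ gives that $F_{m,1}$ and $F_{m,-1}$ have right Goldie rank $m$.

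I do not expect any substantial obstacle here: the statement is essentially a packaging of Corollary~\ref{unique2}, Lemma~\ref{countableex} and Corollary~\ref{whenmax} applied to the specific algebra of Example~\ref{quantum torus}. The only mildly delicate point is the identification of $A/M_\pm$ with the $(p-1)$-variable quantum torus $B$ together with the appeal to \cite{McCP} for its simplicity, and this bookkeeping has already been carried out in the proof of Lemma~\ref{countableex}, so it can simply be cited.
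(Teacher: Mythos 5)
Your proposal is correct and follows essentially the same route as the paper, which simply invokes Corollary~\ref{unique2} (equivalently Proposition~\ref{Grank}) with the hypotheses supplied by Lemma~\ref{countableex} and the right Noetherianity of $A$ (hence of $A/M_{\pm}$) to get the simple right Ore domain condition. Your extra step of identifying $A/M_{\pm}$ with the $(p-1)$-generator quantum torus $B$ is just a more explicit way of seeing the same thing and is already implicit in the proof of Lemma~\ref{countableex}.
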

\begin{proof}
The conditions of Corollary~\ref{unique2} are satisfied by Lemma~\ref{countableex} and the fact that $A$ is right Noetherian.
\end{proof}


\begin{thebibliography}{9}


\bibitem{vlad1}
V. V. Bavula, {\it Generalized Weyl algebras and their
representations},
Algebra i Analiz {\bf 4} (1992), no. 1, 75-97;  English transl. in St Petersburg Math. J. {\bf 4} (1993), 71-92.


\bibitem{vlad5}
V. V. Bavula, {\it Filter dimension of algebras and modules, a
simplicity criterion for generalized Weyl algebras}, Comm. Algebra
{\bf 24} (1996), 1971-1992.

\bibitem{BGl}
K. A. Brown and K. R. Goodearl, \emph{Lectures on Algebraic Quantum
Groups}, Birkh\"{a}user (Advanced Courses in Mathematics CRM
Barcelona), Basel-Boston-Berlin, 2002.






\bibitem{Chatt} A. W. Chatters, {\it Noncommutative unique factorization domains},
Math. Proc. Cambridge Philos. Soc. {\bf 95} (1984), no. 1, 49�54.


\bibitem{dix} J. Dixmier, Enveloping Algebras, Grad. Stud.
Math. {\bf 11}, Amer. Math. Soc. Providence, RI, 1996.





\bibitem{cdfdaj} C. D. Fish and D. A. Jordan, {\it Connected quantized Weyl algebras  and quantum cluster algebras},  J. Pure Appl. Algebra (2017), DOI 10.1016/j.jpaa2017.09.019.





\bibitem{itskew}
D. A. Jordan, {\it Iterated skew polynomial rings and  quantum
groups}, J. Algebra {\bf 174} (1993), 267-281.


\bibitem{htone}
D. A. Jordan, {\it Height one prime ideals of certain iterated
skew polynomial rings}, Math. Proc. Cambridge Philos. Soc. {\bf
114} (1993), 407-425.

\bibitem{primitive}
D. A. Jordan, {\it Primitivity in skew Laurent polynomial rings
and related rings}, Math. Z. {\bf 213} (1993), 353-371.

\bibitem{downup}
D. A. Jordan, {\it Down-up algebras and ambiskew polynomial rings},  J. Algebra {\bf 228} (2000), 311-346.

\bibitem{inv}
D. A. Jordan and I. E. Wells, {\it Invariants for automorphisms of certain iterated skew polynomial rings}, Proc.  Edinburgh Math. Soc. {\bf 39} (1996), 461-472.



\bibitem{ambiskew}
D. A. Jordan and I. E. Wells, {\it Simple
ambiskew polynomial rings}, J. Algebra {\bf 382} (2013) 46-70. 


\bibitem{McCP}
J. C. McConnell and J. J. Pettit, {\it Crossed products and multiplicative analogues of Weyl algebras},
J. London Math. Soc. (2) {\bf 38} (1988),  47-55.

\bibitem{McCR}
J. C. McConnell and J. C. Robson, Noncommutative Noetherian rings,
Wiley, Chichester (1987).

\bibitem{Smith} S. P. Smith, {\it A class of algebras similar to the enveloping algebra of $\text{sl}(2,\C)$}, Trans. Amer. Math. Soc. {\bf 322} (1990), 285-314.


\bibitem{terwor} P. Terwilliger and C. Worawannotai, {\it Augmented down-up algebras and uniform posets}, Ars Mathematica Contemporanea {\bf6}, Issue 2, (2013), 409-417.
\end{thebibliography}
\end{document}